\DeclareMathOperator{\img}{Im}
\newcommand{\etal}{et~al.}
\mathchardef\mhyphen="2D
\def\loongmapsto#1{%
  \begin{tikzpicture}
    \draw (0,0.5mm) -- (0,-0.5mm);
    \draw[->] (0,0) -- (1.4, 0) node[above,midway] {\scriptsize #1};
  \end{tikzpicture}
}
\newcommand{\End}{\mathrm{End}}           
\newcommand{\Cay}{\mathrm{Cay}}           
\newcommand{\Ascseq}{\mathcal{A}}         
\newcommand{\Modasc}{\hat{\Ascseq}}       
\newcommand{\ntrees}{\mathrm{Tree}}       
\newcommand{\fubtrees}{\mathrm{Tree}^{\star}} 
\newcommand{\fishmat}{\mathcal{M}}        
\newcommand{\fishtrees}{\mathcal{T}}      
\newcommand{\fishpos}{\mathcal{P}}        
\newcommand{\twoplustwo}{\mathbf{2\hspace{-0.2em}+\hspace{-0.2em}2}}   
\newcommand{\asctops}{\mathrm{asctops}}   
\newcommand{\nub}{\mathrm{nub}}           
\newcommand{\pref}{\mathrm{pref}}         
\newcommand{\suff}{\mathrm{suff}}         
\newcommand{\treetops}{\mathrm{asctops}} 
\newcommand{\unseen}{\mathrm{nub}}       
\newcommand{\vertset}{\mathrm{V}}        
\newcommand{\lchild}{\mathrm{lchild}}     
\newcommand{\rchild}{\mathrm{rchild}}     
\newcommand{\lpath}{\mathrm{lpath}}       
\newcommand{\rpath}{\mathrm{rpath}}       
\newcommand{\diag}{\mathrm{diag}}         
\newcommand{\nondiag}{\overline{\diag}}   
\newcommand{\vlabel}{\mathfrak{l}}        
\newcommand{\blabel}{\mathfrak{b}}        
\newcommand{\level}{\mathrm{lev}}         
\newcommand{\treetoseq}{\alpha}           
\newcommand{\seqtotree}{\bar{\treetoseq}} 
\newcommand{\treetomat}{\beta}            
\newcommand{\mattotree}{\bar{\treetomat}} 
\newcommand{\treetopos}{\gamma}           
\newcommand{\postotree}{\bar{\treetopos}} 
\newcommand{\pairs}{\mathfrak{P}}         
\newcommand{\flip}{\mathrm{flip}}         
\newcommand{\sumop}{\mathrm{sum}}         
\newtheorem{theorem}{Theorem}[section]
\newtheorem{theorem*}{Theorem}[section]
\newtheorem{proposition}[theorem]{Proposition}
\newtheorem{lemma}[theorem]{Lemma}
\newtheorem{corollary}[theorem]{Corollary}
\newtheorem*{openproblem*}{Open Problem}
\theoremstyle{definition}
\newtheorem{definition}[theorem]{Definition}
\newtheorem{remark}[theorem]{Remark}
\newtheorem*{remark*}{Remark}
\newtheorem{example}{Example}
\newtheorem*{example*}{Example}
\title{Fishburn trees}
\author{Giulio Cerbai \and Anders Claesson} \date{}
\begin{document}
\thispagestyle{empty}

\maketitle
\begin{abstract}
  The in-order traversal provides a natural correspondence between
  binary trees with a decreasing vertex labeling and endofunctions on a
  finite set. By suitably restricting the vertex labeling we arrive at a
  class of trees that we call Fishburn trees. We give bijections between
  Fishburn trees and other well-known combinatorial structures that are
  counted by the Fishburn numbers, and by composing these new maps we
  obtain simplified versions of some of the known maps. Finally, we
  apply this new machinery to the so called flip and sum problems on
  modified ascent sequences.
\end{abstract}

\section{Introduction}
\thispagestyle{empty}

The coefficients of the elegant power series
$$
\sum_{n\geq 0}\prod_{k=1}^n\bigl(1-(1-x)^k\bigr)
\,=\, 1 +x + 2x^2 + 5x^3 + 15x^4 + 53x^5+\cdots
$$
are known as the Fishburn numbers, which is sequence A022493 in the
OEIS~\cite{Sl}. Claesson and Linusson~\cite{CL} named them so in honor
of Peter C.\ Fishburn (1936--2021), who pioneered, among other
things, the study of interval orders~\cite{F,F2}. The last decade has
seen a lot of interest in combinatorial structures related to this
counting sequence. The starting point was the 2010
paper~\cite{BMCDK} by Bousquet-M\'elou, Claesson,
Dukes and Kitaev, which gave one-to-one correspondences
between certain, apparently unrelated, objects:
$(\twoplustwo)$-free posets; the set of permutations avoiding a
certain bivincular pattern, now called Fishburn permutations;
Stoimenow matchings~\cite{St}; and ascent sequences.
They also provided an algorithm to transform an ascent sequence
into its modified version, and showed that the latter is
closely related to the level distribution of the corresponding
$(\twoplustwo)$-free poset.
Later, Dukes and Parviainen~\cite{DP} found a bijection
between ascent sequences and Fishburn matrices~\cite{F2}, that is,
upper triangular matrices with nonnegative integer entries whose
every row and column contains at least one positive entry.
All these objects are enumerated by the Fishburn numbers, and
for this reason we shall refer to them as \emph{Fishburn structures}.

We will define two new structures of this kind, namely \emph{Fishburn
  trees} and \emph{Fishburn covers}. The former are
decreasing binary trees satisfying some simple conditions on their
labeling, while the latter encode the trees as an ordered collection of
multisets. There are surprisingly straightforward bijections relating
them to modified ascent sequences, Fishburn matrices and
$(\twoplustwo)$-free posets. By composing these new maps we obtain
simplified versions of those previously known in the literature.  In
this sense, Fishburn trees and Fishburn covers provide a transparent
encoding of other Fishburn structures, and we may regard them as central
objects from which the others are derived. For instance, the Dukes and
Parviainen bijection~\cite{DP} is obtained by composing the map between
modified ascent sequences and Fishburn trees with the map between
Fishburn trees and Fishburn matrices.  As an application, we provide a
more direct solution to the $\flip$ and $\sumop$ problems (defined
below).

Our work fits into an active line of
research~\cite{CYZ,DJK,DKRS,DM,ES,FJLYZ,J,KR,L,Ya} that explores the relations
between Fishburn structures by analyzing how statistics and
operations that are natural on a certain object are
transported to the others. In this context, the following two
problems, originally proposed by Dukes and Parviainen~\cite{DP}, are
particularly relevant.

\begin{itemize}
\item \emph{The flip problem}. Duality acts as an involution on $(\twoplustwo)$-free
  posets. On Fishburn matrices, this is equivalent to reflecting a
  matrix in its antidiagonal. What is the corresponding operation on
  ascent sequences?
\item \emph{The sum problem}. The result of adding two Fishburn matrices
  is another Fishburn matrix. What is the corresponding operation on
  ascent sequences?
\end{itemize}

Note that the Dukes and Parviainen bijection between
ascent sequences and Fishburn matrices could
be used to compute the $\flip$ and
$\sumop$ operations. For instance, if $x$ is an ascent sequence, 
one could first determine the Fishburn matrix~$A$ corresponding
to~$x$, then compute $\flip(A)$ by reflecting $A$ in its 
antidiagonal, and finally go back to the desired ascent sequence
by applying the Dukes and Parviainen bijection once again.
This map is, however, defined by a rather intricate recursive
construction that makes this approach opaque. The goal is to find a
more transparent solution.
A first answer to the $\flip$ and $\sumop$ problems was proposed
by Ying and Yu~\cite{YY}. Roughly speaking, Ying and Yu
encode Fishburn matrices as, what they call, $M$-sequences, to then define a
bijection between ascent sequences and $M$-sequences
of Fishburn matrices. The $\flip$ and $\sumop$ are computed
on $M$-sequences, and the corresponding ascent sequences are
once again obtained by composition. Unfortunately, this solution is rather
cryptic, mainly due to the high
amount of technicalities, and the lack of a geometric description
of the construction. We believe that we have found a more transparent
construction; a key in making the construction more transparent is to view
it in terms of modified ascent sequences rather that plain ascent sequences.

In Section~\ref{section_prelim} we introduce a family of
decreasing binary trees called \emph{endotrees}.
We show that endotrees bijectively map to endofunctions via the in-order
traversal of the tree. To describe the inverse of this bijection, we define
the \emph{max-decomposition} of an endofunction and use it to recursively
build an endotree. Similarly, Cayley permutations are in
one-to-one correspondence with endotrees whose labels form an interval,
we call them \emph{regular endotrees}.

In Section~\ref{section_Fish_trees} we introduce Fishburn
trees as the set of regular endotrees satisfying an
additional property.
By decomposing a Fishburn tree in maximal right paths, each one
labeled with a unique integer, we are able to encode it
as an ordered collection of multisets, the \emph{Fishburn cover}.
The main result of this section, Theorem~\ref{pairs_to_tree}, is a
bijection from Fishburn covers to Fishburn trees.

In Section~\ref{section_trees_modasc}, we obtain a bijection
between Fishburn trees and modified ascent sequences by
restricting the in-order sequence and the max-decomposition.

In Section~\ref{section_trees_mat} we define a bijection
from Fishburn trees to Fishburn matrices by simply mapping
each maximal right path to a specific row of the matrix.
More specifically, we set the $(i,j)$-th entry of the matrix
equal to the number of nodes with label~$j$ contained
in the $i$-th path. On the other hand, a Fishburn matrix
naturally induces a Fishburn cover, and the corresponding
Fishburn tree is determined by Theorem~\ref{pairs_to_tree}.

The bijection from Fishburn trees to $(\twoplustwo)$-free
posets has a similar flavour, and is illustrated in
Section~\ref{section_trees_pos}.
Nodes with label~$i$ are mapped to the $i$-th level of the poset,
and the $j$-th strict down-set contains those that belong to
a maximal right path with index strictly less than~$j$.
Conversely, we show that Fishburn covers naturally define a
canonical labeling of $(\twoplustwo)$-free posets.

In Section~\ref{section_flipsum} we use Fishburn covers as stepping
stones to compute the $\flip$ and $\sumop$ of modified
ascent sequences.
We end this section with two concrete examples.

In Section~\ref{section_final} we provide a high-level
description of the framework introduced in this paper
and leave some open problems and suggestions for
future work.

\section{Endofunctions and decreasing binary trees}\label{section_prelim}

For any natural number $n$, let $\End_n$ be the set of
\emph{endofunctions}, $x:[n]\to[n]$, where
$[n]=\lbrace 1,2,\dots, n\rbrace$. We often identify an endofunction $x$
with the word $x=x_1\dots x_n$, where $x_i=x(i)$ for each $i\in[n]$.
Let $\End = \cup_{n\geq 0}\End_n$.  In general, given a set $A$ whose
elements are equipped with a notion of size, we will denote by $A_n$ the
set of elements in $A$ that have size $n$. Or, conversely, given a
definition of $A_n$ (of elements of size $n$) we let
$A=\cup_{n\geq 0}A_n$.  If $\img(x)=[k]$, for some $k\le n$, then $x$ is
a \emph{Cayley permutation}~\cite{Ca,MF}. The set of Cayley permutations is denoted
by $\Cay$. In other words, $x$ is a Cayley permutation if it contains at
least one copy of each integer between $1$ and its maximum element. For
example, $\Cay_1=\left\lbrace 1\right\rbrace$,
$\Cay_2=\left\lbrace 11,12,21\right\rbrace$ and
$$
  \Cay_3=\left\lbrace 111,112,121,122,123,132,211,212,213,221,231,312,321 \right\rbrace.
$$
There is a well known bijection between Cayley permutations and ballots
(ordered set partitions) of $[n]$. Indeed, a Cayley permutation $x$
encodes the ballot $B_1\dots B_k$, where $i\in B_{x(i)}$. In
particular, $|\Cay_n|$ is the $n$-th Fubini number, which is sequence
A000670 in the OEIS~\cite{Sl}.

A \emph{binary tree} is either the empty tree or a triple
$$
  T=(L, r, R),
$$
where $r$ is a node called the \emph{root} of $T$ and $L$ and
$R$ are binary trees called the \emph{left subtree} and the
\emph{right subtree} of $T$, respectively. Equivalently, a binary tree
is a rooted plane tree where each node has either $0$ children; $1$ child,
which can be either a left or right child; or $2$ children, namely a
left child and a right child.

Let $T$ be a binary tree. We denote by~$\vertset(T)$ the set of nodes
of~$T$. The \emph{size} of $T$ is the
cardinality of $\vertset(T)$. Now, suppose that $T$ is equipped with a
\emph{vertex labeling} $\vlabel:\vertset(T)\to \{1,2,\dots\}$ assigning
to each node $v\in \vertset(T)$ a positive integer label
$\vlabel(v)$. Then, assuming that $T$ is nonempty, we let
$$
  \max(T)=\max\bigl\lbrace\vlabel(v):v\in\vertset(T)\bigr\rbrace
$$
denote the largest value among the labels of $T$. For convenience we also
let $\max(T)=0$ when $T=\emptyset$ is the empty tree.

A \emph{decreasing binary tree} is a vertex-labeled binary tree $T$ such
that either $T$ is empty or $T=(L,r,R)$, where
$L$ and $R$ are decreasing binary trees and
$$\vlabel(r)\geq \max\bigl\{\max(L), \max(R)\bigr\}.
$$
We say that $T$ is \emph{strictly decreasing to the left} if it is empty
or $\vlabel(r)>\max(L)$ and $L$ and $R$ are strictly decreasing to the
left. Less formally, a vertex-labeled binary tree $T$ is
decreasing if on any path from the root to a leaf we encounter the
labels in weakly decreasing order. It is strictly decreasing to the left
if on any such path when we take a left turn we encounter a smaller
label.

\begin{definition}
  A decreasing binary tree $T$ of size $n$ is said to be an
  \emph{endotree} if it is strictly decreasing to the left and
  $\vlabel(v)\in [n]$ for each $v\in\vertset(T)$.  The last condition
  may be more compactly written $\img(\vlabel)\subseteq[n]$. If, in
  addition, $\img(\vlabel)=[k]$, for some $k\leq n$, then $T$ is said to
  be \emph{regular}. We denote by $\ntrees$ the set of
  endotrees and by $\fubtrees$ the set of regular endotrees.
\end{definition}
Of the four endotrees of size 2 there is exactly one which is not
regular, namely
\[
  \bigl(\emptyset, 2, (\emptyset, 2, \emptyset)\bigr) \;=\,
  \begin{tikzpicture}[scale = .5, semithick, baseline=-20pt, level distance=1.8cm, inner sep=2.3pt]
    \node {2}
    child {node at(1.5,0){2}}
    ;
  \end{tikzpicture}
\]
Of size 3 there are 13 regular endotrees and they are illustrated in
Figure~\ref{figure_fubtrees}.

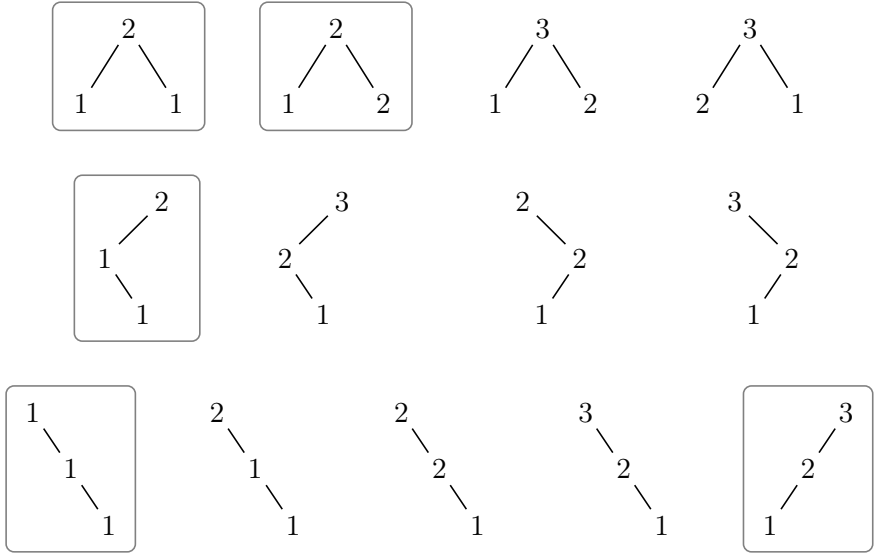
\begin{figure}
  \arraycolsep=10pt
  \def\arraystretch{2.0}
  \[
    \begin{array}{ccccc}
      \begin{tikzpicture}[scale = .5, semithick, baseline=0pt, level distance=2cm, inner sep=2.3pt]
        \draw[gray,rounded corners=3pt] (-2,-2.7) rectangle (2,0.7);
        \tikzstyle{level 1}=[sibling distance=2.5cm]
        \node {2}
        child {node {1}}
        child {node {1}}
        ;
      \end{tikzpicture}&
      \begin{tikzpicture}[scale = .5, semithick, baseline=0pt, level distance=2cm, inner sep=2.3pt]
        \draw[gray,rounded corners=3pt] (-2,-2.7) rectangle (2,0.7);
        \tikzstyle{level 1}=[sibling distance=2.5cm]
        \node {2}
        child {node {1}}
        child {node {2}}
        ;
      \end{tikzpicture}&
      \begin{tikzpicture}[scale = .5, semithick, baseline=0pt, level distance=2cm, inner sep=2.3pt]
        \draw[white,rounded corners=3pt] (-2,-2.7) rectangle (2,0.7);
        \tikzstyle{level 1}=[sibling distance=2.5cm]
        \node {3}
        child {node {1}}
        child {node {2}}
        ;
      \end{tikzpicture}&
      \begin{tikzpicture}[scale = .5, semithick, baseline=0pt, level distance=2cm, inner sep=2.3pt]
        \draw[white,rounded corners=3pt] (-2,-2.7) rectangle (2,0.7);
        \tikzstyle{level 1}=[sibling distance=2.5cm]
        \node {3}
        child {node {2}}
        child {node {1}}
        ;
      \end{tikzpicture}\\
    \end{array}
  \]
  \[
    \begin{array}{ccccc}
      \begin{tikzpicture}[scale = .5, semithick, baseline=0pt, level distance=1.5cm, inner sep=2.3pt]
        \draw[gray,rounded corners=3pt] (-2.3,-3.7) rectangle (1,0.7);
        \node {2}
        child {
            node at(-1.5,0){1}
            child {node at(1,0){1}}
          };
      \end{tikzpicture}&
      \begin{tikzpicture}[scale = .5, semithick, baseline=0pt, level distance=1.5cm, inner sep=2.3pt]
        \draw[white,rounded corners=3pt] (-2.3,-3.7) rectangle (1,0.7);
        \node {3}
        child {
            node at(-1.5,0){2}
            child {node at(1,0){1}}
          };
      \end{tikzpicture}&
      \begin{tikzpicture}[scale = .5, semithick, baseline=0pt, level distance=1.5cm, inner sep=2.3pt]
        \draw[white,rounded corners=3pt] (-2.3,-3.7) rectangle (1,0.7);
        \node {2}
        child {
            node at(1.5,0){2}
            child {node at(-1,0){1}}
          };
      \end{tikzpicture}&
      \begin{tikzpicture}[scale = .5, semithick, baseline=0pt, level distance=1.5cm, inner sep=2.3pt]
        \draw[white,rounded corners=3pt] (-2.3,-3.7) rectangle (1,0.7);
        \node {3}
        child {
            node at(1.5,0){2}
            child {node at(-1,0){1}}
          };
      \end{tikzpicture}
    \end{array}
  \]
  \[
    \begin{array}{ccccc}
      \begin{tikzpicture}[scale = .5, semithick, baseline=0pt, level distance=1.5cm, inner sep=2.3pt]
        \draw[gray,rounded corners=3pt] (-0.7,-3.7) rectangle (2.7,0.7);
        \node {1}
        child {
            node at(1,0){1}
            child {node at(1,0){1}}
          };
      \end{tikzpicture}&
      \begin{tikzpicture}[scale = .5, semithick, baseline=0pt, level distance=1.5cm, inner sep=2.3pt]
        \draw[white,rounded corners=3pt] (-0.7,-3.7) rectangle (2.7,0.7);
        \node {2}
        child {
            node at(1,0){1}
            child {node at(1,0){1}}
          };
      \end{tikzpicture}&
      \begin{tikzpicture}[scale = .5, semithick, baseline=0pt, level distance=1.5cm, inner sep=2.3pt]
        \draw[white,rounded corners=3pt] (-0.7,-3.7) rectangle (2.7,0.7);
        \node {2}
        child {
            node at(1,0){2}
            child {node at(1,0){1}}
          };
      \end{tikzpicture}&
      \begin{tikzpicture}[scale = .5, semithick, baseline=0pt, level distance=1.5cm, inner sep=2.3pt]
        \draw[white,rounded corners=3pt] (-0.7,-3.7) rectangle (2.7,0.7);
        \node {3}
        child {
            node at(1,0){2}
            child {node at(1,0){1}}
          };
      \end{tikzpicture}&
      \begin{tikzpicture}[scale = .5, semithick, baseline=0pt, level distance=1.5cm, inner sep=2.3pt]
        \draw[gray,rounded corners=3pt] (-2.7,-3.7) rectangle (0.7,0.7);
        \node {3}
        child {
            node at(-1,0){2}
            child {node at(-1,0){1}}
          };
      \end{tikzpicture}
   \end{array}
  \]
  \caption{Regular endotrees of size 3. Fishburn trees have been highlighted.}\label{figure_fubtrees}
\end{figure}

The \emph{in-order traversal} of a binary tree $T=(L,r,R)$ is performed
as follows: recursively traverse the left subtree $L$, visit the root
$r$, and recursively traverse the right subtree $R$. For the rest of
this paper, we will denote by $v_i$ the $i$-th visited node in the
in-order traversal of $T$. The \emph{in-order sequence} of a
vertex-labeled binary tree $T$ is defined by $\alpha(T)=x_1\dots x_n$,
in which $x_i=\vlabel(v_i)$. We can alternatively define $\alpha(T)$
recursively as follows. If $T$ is the empty tree,
then $\alpha(T)$ is the empty string. Otherwise, $T$ is nonempty and we
can write $T=(L,r,R)$. Then
$$
  \treetoseq(T)=\treetoseq(L)\vlabel(r)\treetoseq(R).
$$
It is easy to see that if $T$ is an endotree (of size $n$ and maximum $k$),
then $\treetoseq(T)$ is an endofunction (of size $n$ and maximum $k$).
That is, we have a map
$$
  \treetoseq:\ntrees_n\to\End_n.
$$
We wish to define the inverse map $\seqtotree$.
The \emph{max-decomposition} of a nonempty endofunction $x=x_1\dots x_n$ is
$$
  x\,=\,\pref(x)\,x_m\,\suff(x),
$$
where $\pref(x)=x_1\dots x_{m-1}$, $\suff(x)=x_{m+1}\dots x_{n}$ and
$m=\min\bigl(x^{-1}(\max(x))\bigr)$ is the index of the leftmost
occurrence of $\max(x)=\max\{x_i: i\in[n]\}$ in $x$. The tree
$\seqtotree(x)$ is then defined using recursion: If $x$ is the empty
word, then $\seqtotree(x)$ is the empty tree. Otherwise, $x$ is nonempty
and using the max-decomposition we can write $x=\pref(x)x_m\suff(x)$. Now, let
$$
  \seqtotree(x)=\bigl(L,r,R\bigr)
$$
be the tree with root $r$ labeled~$\vlabel(r)=x_m$, left subtree
$L=\seqtotree\bigl(\pref(x)\bigr)$ and right subtree
$R=\seqtotree\bigl(\suff(x)\bigr)$.

\begin{proposition}
  If $x\in\End_n$, then $\seqtotree(x)\in\ntrees_n$.
\end{proposition}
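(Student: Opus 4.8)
The plan is to argue by induction on $n$, but the recursive definition of $\seqtotree$ forces a subtlety: it is applied to the words $\pref(x)$ and $\suff(x)$, and these need not be endofunctions in their own right, since their letters lie in $[n]$ rather than in $[m-1]$ or $[n-m]$. I would therefore not induct on the statement of the proposition directly. Instead I would first establish a more robust claim for $\seqtotree$ applied to an \emph{arbitrary} word $w=w_1\dots w_k$ over the positive integers: the tree $T=\seqtotree(w)$ has size $k$, is strictly decreasing to the left and decreasing, has in-order sequence $\treetoseq(T)=w$, and has its root labeled $\max(w)$.

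For the induction, the base case $k=0$ is immediate, as the empty word yields the empty tree. For the inductive step, write the max-decomposition $w=\pref(w)\,w_m\,\suff(w)$ with $m=\min\bigl(w^{-1}(\max(w))\bigr)$ and put $T=\seqtotree(w)=(L,r,R)$, so $\vlabel(r)=w_m=\max(w)$, $L=\seqtotree(\pref(w))$ and $R=\seqtotree(\suff(w))$. As $\pref(w)$ and $\suff(w)$ are strictly shorter than $w$, the inductive hypothesis gives that $L$ and $R$ are strictly-decreasing-to-the-left trees with $\treetoseq(L)=\pref(w)$ and $\treetoseq(R)=\suff(w)$; concatenating yields $\treetoseq(T)=\pref(w)\,w_m\,\suff(w)=w$ and size $(m-1)+1+(k-m)=k$. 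Since the in-order sequence of $L$ is $\pref(w)$, its label multiset is the multiset of letters of $\pref(w)$, so $\max(L)=\max(\pref(w))$, and likewise $\max(R)=\max(\suff(w))$. Now $m$ is the \emph{leftmost} index attaining the maximum, so $\max(w)$ does not occur among $w_1,\dots,w_{m-1}$; hence $\max(L)=\max(\pref(w))<\max(w)=\vlabel(r)$, which is exactly the strict decrease at the root and, with the inductive strictness of $L$ and $R$, shows $T$ is strictly decreasing to the left. On the right, $\max(R)=\max(\suff(w))\le\max(w)=\vlabel(r)$, giving $\vlabel(r)\ge\max\{\max(L),\max(R)\}$, so $T$ is decreasing. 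This proves the claim.

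It then remains only to specialise. If $x\in\End_n$ then $x$ is a word of length $n$, so by the claim $\seqtotree(x)$ has size $n$ and is both decreasing and strictly decreasing to the left; its labels form the multiset of letters of $x$, and since $x$ is an endofunction each letter lies in $[n]$, whence $\img(\vlabel)\subseteq[n]$. By definition this makes $\seqtotree(x)$ an endotree of size $n$, that is, $\seqtotree(x)\in\ntrees_n$. The one point that genuinely needs care, and the main obstacle to a naive induction, is exactly that prefixes and suffixes of an endofunction are generally not endofunctions; strengthening the inductive hypothesis to record the full in-order sequence (hence the exact label multiset) together with the root-equals-maximum property is what lets the recursion close, after which both decrease conditions fall out of the choice of $m$ as the leftmost occurrence of the maximum.
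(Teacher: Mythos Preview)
Your proof is correct and follows essentially the same inductive argument as the paper: both use the max-decomposition and the leftmost-maximum property to obtain the strict decrease to the left and the weak decrease to the right. You are simply more explicit than the paper about strengthening the induction hypothesis to arbitrary words (the paper applies ``the induction hypothesis to $L$ and $R$'' without commenting on the issue you raise), and you bundle in the auxiliary facts $\treetoseq(T)=w$ and $\vlabel(r)=\max(w)$, which are a bit more than strictly needed but harmless.
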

\begin{proof}
  Let $T=\seqtotree(x)$. If $x$ is empty, then there is nothing to
  prove. Assume that $x$ is nonempty and let $x=\pref(x)x_m\suff(x)$ be
  its max-decomposition. By definition of the map $\seqtotree$ we may
  write $T=(L,r,R)$, where $\vlabel(r)=x_m$,
  $L=\seqtotree\bigl(\pref(x)\bigr)$ and
  $R=\seqtotree\bigl(\suff(x)\bigr)$. Since $x_m$ is the leftmost
  occurrence of $\max(x)$ in $x$, each label in $L$ is strictly smaller
  than the label $x_m$ of the root $r$. Similarly, $T$ is weakly
  decreasing to the right since each label in $R$ is at most equal to
  $x_m$. The result follows from applying the induction hypothesis to $L$
  and $R$.
\end{proof}

\begin{proposition}
  The inverse map of $\treetoseq$ is $\seqtotree$.
\end{proposition}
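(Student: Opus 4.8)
The plan is to show that the two composites $\treetoseq\circ\seqtotree$ and $\seqtotree\circ\treetoseq$ are both the identity, each by induction on the size $n$. Since the previous proposition guarantees that $\seqtotree$ maps $\End_n$ into $\ntrees_n$, and the excerpt already observed that $\treetoseq$ maps $\ntrees_n$ into $\End_n$, both composites are well-defined (as self-maps of $\End_n$ and of $\ntrees_n$, respectively), so establishing the two identities suffices.

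First I would handle the direction $\treetoseq\bigl(\seqtotree(x)\bigr)=x$, which follows almost immediately from the way the two recursions are built to mirror one another. For nonempty $x$ with max-decomposition $x=\pref(x)\,x_m\,\suff(x)$, the definition of $\seqtotree$ gives $\seqtotree(x)=(L,r,R)$ with $\vlabel(r)=x_m$, $L=\seqtotree\bigl(\pref(x)\bigr)$ and $R=\seqtotree\bigl(\suff(x)\bigr)$. Applying the recursive formula for $\treetoseq$ then yields $\treetoseq\bigl(\seqtotree(x)\bigr)=\treetoseq(L)\,x_m\,\treetoseq(R)$, and the induction hypothesis applied to the shorter words $\pref(x)$ and $\suff(x)$ rewrites this as $\pref(x)\,x_m\,\suff(x)=x$.

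The other direction $\seqtotree\bigl(\treetoseq(T)\bigr)=T$ is the crux, and it is where the \emph{strictly decreasing to the left} hypothesis is essential. The key lemma I would prove is: for a nonempty endotree $T=(L,r,R)$, the max-decomposition of the word $w=\treetoseq(T)=\treetoseq(L)\,\vlabel(r)\,\treetoseq(R)$ splits $w$ exactly at the root, that is $\pref(w)=\treetoseq(L)$, $w_m=\vlabel(r)$ and $\suff(w)=\treetoseq(R)$. Since $T$ is decreasing, $\vlabel(r)=\max(T)=\max(w)$; and since $T$ is strictly decreasing to the left, $\vlabel(r)>\max(L)$, so no label of $L$ equals $\vlabel(r)$ and the root label occupies position $|\vertset(L)|+1$, which is therefore the leftmost occurrence of the maximum of $w$. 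Thus $m=|\vertset(L)|+1$ and the claimed split holds. Given this lemma, the definition of $\seqtotree$ unfolds as $\seqtotree(w)=\bigl(\seqtotree(\treetoseq(L)),\,r,\,\seqtotree(\treetoseq(R))\bigr)$, and the induction hypothesis applied to $L$ and $R$ collapses this to $(L,r,R)=T$.

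I expect the main obstacle to be verifying this key lemma carefully: one must confirm that the leftmost maximum of $w$ is precisely the root, and not some equal label sitting in the right subtree. The decreasing property alone only gives $\vlabel(r)\ge\max(R)$, so a second copy of $\max(w)$ may well occur to the right of the root; what rules out an \emph{earlier} occurrence is exactly the strict inequality $\vlabel(r)>\max(L)$, which forces every label preceding the root in the in-order word to be strictly smaller. Once this position argument is pinned down, both inductions are routine.
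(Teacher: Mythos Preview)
Your proposal is correct and follows essentially the same approach as the paper: both composites are shown to be the identity by induction on size, with the key observation that for an endotree $T=(L,r,R)$ the in-order word $\treetoseq(L)\,\vlabel(r)\,\treetoseq(R)$ is precisely the max-decomposition of $\treetoseq(T)$ because $T$ is strictly decreasing to the left. Your write-up is somewhat more explicit about why the leftmost maximum lands at the root (ruling out earlier occurrences via $\vlabel(r)>\max(L)$), but the argument is the same.
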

\begin{proof}
  Using induction we shall show that $\treetoseq\circ\seqtotree$ is the
  identity function on $\End_n$, and that $\seqtotree\circ\treetoseq$
  is the identity function on $\ntrees_n$. The base cases are trivial
  and omitted. Assume $n\geq 1$. Let $x\in\End_n$. Applying the induction
  hypothesis to $\pref(x)$ and $\suff(x)$ we find that
  $$
    \treetoseq\bigl(\seqtotree(x)\bigr)
    = \treetoseq\bigl(\seqtotree(\pref(x)\bigr)x_m\treetoseq\bigl(\seqtotree(\suff(x)\bigr)
    = \pref(x)x_m\suff(x)
    = x.
  $$
  Let $T=(L,r,R)\in\ntrees_n$. By definition of $\treetoseq$, we have
  \begin{equation}\label{eq1}
    \treetoseq(T)=\treetoseq(L)\vlabel(r)\treetoseq(R).
  \end{equation}
  Since $T$ is strictly decreasing to the left, we have
  $\vlabel(r)>\vlabel(u)$ for each node $u$ in
  $L$. Thus, Equation~\ref{eq1} is the max-decomposition of
  $\treetoseq(T)$ and, by induction,
  \begin{equation*}
    \seqtotree\bigl(\treetoseq(T)\bigr)
    = \bigl(\seqtotree(\treetoseq(L)),r,\seqtotree(\treetoseq(R))\bigr)
    = (L,r,R)
    = T.\qedhere
  \end{equation*}
\end{proof}

A corollary of the previous result is that~$\treetoseq:\ntrees\to\End$
is a size-preserving bijection with inverse~$\seqtotree$. Furthermore,
it is easy to see that $T\in\fubtrees_n$ if and only if
$\treetoseq(T)\in\Cay_n$. That is, the (restricted) map
$\treetoseq:\fubtrees\to\Cay$ is a size-preserving bijection between
$\fubtrees$ and $\Cay$.

\begin{corollary}
  For each $n\ge 1$ we have
  $$
    |\ntrees_n|=|\End_n|\quad\text{and}\quad|\fubtrees_n|=|\Cay_n|.
  $$
\end{corollary}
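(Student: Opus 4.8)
The plan is to read off the corollary directly from the two bijections established just above it. The immediately preceding paragraph states that $\treetoseq:\ntrees\to\End$ is a size-preserving bijection with inverse $\seqtotree$, and that its restriction $\treetoseq:\fubtrees\to\Cay$ is also a size-preserving bijection. Since a size-preserving bijection restricts to a bijection on each size class, we get bijections $\ntrees_n\to\End_n$ and $\fubtrees_n\to\Cay_n$ for every $n$, and bijections between finite sets force equality of cardinalities. So the corollary is essentially a one-line consequence.

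More explicitly, I would argue as follows. By the corollary to the two propositions, $\treetoseq$ and $\seqtotree$ are mutually inverse maps between $\ntrees$ and $\End$, and each is size-preserving: for $T\in\ntrees_n$ we have $\treetoseq(T)\in\End_n$, as noted right after the recursive definition of $\alpha$. Hence $\treetoseq$ maps $\ntrees_n$ into $\End_n$ and $\seqtotree$ maps $\End_n$ into $\ntrees_n$, and since the two compositions are the respective identities, these restrictions are mutually inverse bijections $\ntrees_n\leftrightarrow\End_n$. A bijection between the finite sets $\ntrees_n$ and $\End_n$ yields $|\ntrees_n|=|\End_n|$. The second equality follows identically, using that $T\in\fubtrees_n$ if and only if $\treetoseq(T)\in\Cay_n$, so that $\treetoseq$ restricts to a bijection $\fubtrees_n\leftrightarrow\Cay_n$ and therefore $|\fubtrees_n|=|\Cay_n|$.

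There is essentially no obstacle here: the entire content has been done in the two preceding propositions and the remark that $\treetoseq$ preserves size and that it carries $\fubtrees$ onto $\Cay$. The only thing worth stating carefully is that a global size-preserving bijection automatically restricts to a bijection on each size-graded piece, which is immediate from the fact that $\treetoseq$ and $\seqtotree$ preserve size and are inverse to one another. I expect the author's proof to be a single sentence, or possibly to omit the proof entirely as an immediate consequence of the preceding discussion.
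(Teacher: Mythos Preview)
Your proposal is correct and matches the paper's approach exactly: the paper states the corollary without proof, treating it as an immediate consequence of the paragraph just before it, which establishes that $\treetoseq$ is a size-preserving bijection $\ntrees\to\End$ restricting to a size-preserving bijection $\fubtrees\to\Cay$. Your expectation that the author would omit the proof entirely is precisely what happens.
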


See Figure~\ref{figure_fishburn_tree} for a concrete endotree
and its corresponding endofunction. In Figure~\ref{figure_non_fishburn}
(on the left) an example of a decreasing binary tree that fails to be an
endotree is given.

\section{Fishburn trees and Fishburn covers}\label{section_Fish_trees}

Throughout the preceding section we have denoted by $T=(L,r,T)$ a binary
tree with root $r$, left subtree $L$ and right subtree $R$. In the same
vein, given a node $v\in \vertset(T)$, let $T(v)$ denote the subtree of
$T$ consisting of $v$ together with all the descendants of $v$, and let
$L(v)$ and $R(v)$ denote the left and right subtrees of $T(v)$, so that
$T(v)=(L(v), v, R(v))$. Recall that $v_i$ denotes the $i$-th visited
node in the in-order traversal of $T$; in particular, $v_1$ is the first
visited node and $L(v_1)=\emptyset$. Assuming that $T$ is an
endotree and that $x=\treetoseq(T)$ is the corresponding
endofunction, then $x_{i-1}<x_i$ if and only if $L(v_i)$ is
nonempty. Such an $x_i$ is called an ascent top; by convention and for
convenience we will also include $x_1$ among the ascent tops. This
justifies us defining
$$
  \treetops(T)=\lbrace v_1\rbrace\cup\lbrace v_i: L(v_i)\neq\emptyset\rbrace
$$
as the set consisting of $v_1$ and nodes that have a left child.  We also
define
$$
  \unseen(T)=\lbrace v_j: \vlabel(v_i)\neq\vlabel(v_j)\ \text{for each } i<j\rbrace
$$
as the set of nodes $v_j$ whose label $\ell=\vlabel(v_j)$ is the
first occurrence of $\ell$ in the in-order sequence of $T$. As illustrated in
Figure~\ref{figure_fishburn_tree}, we can represent an endotree
so that labels of nodes in $\unseen(T)$ are the ``leftmost'' occurrences
among the labels of $T$. The in-order sequence $\treetoseq(T)$ is then
obtained by simply reading the labels of $T$ from left to right. With
this in mind, we say that $v_j\in\unseen(T)$ is the leftmost
occurrence of $\vlabel(v_j)$ in $T$.

\begin{figure}
\begin{center}
\begin{tikzpicture}[scale = .4, semithick, baseline=0pt]
\node(1)  at (0,10){\underline{1}};
\node(2)  at (1,8){1};
\node(3)  at (2,12){\underline{5}};
\node(4)  at (3,10){5};
\node(5)  at (4,6){1};
\node(6)  at (5,8){\underline{3}};
\node(7)  at (6,14){\underline{8}};
\node(8)  at (7,12){8};
\node(9)  at (8,8){5};
\node(10) at (9,6){5};
\node(11) at (10,0){1};
\node(12) at (11,2){\underline{2}};
\node(13) at (12,0){2};
\node(14) at (13,4){\underline{4}};
\node(15) at (14,2){3};
\node(16) at (15,10){\underline{7}};
\node(17) at (16,8){3};
\node(18) at (17,16){\underline{9}};
\node(19) at (18,12){2};
\node(20) at (19,14){\underline{6}};
\node(21) at (20,12){1};
\node at (-1.5,-2){$x=$};
\node(31) at (0,-2.1){\underline{1}};
\node(32) at (1,-2){1};
\node(33) at (2,-2.1){\underline{5}};
\node(34) at (3,-2){5};
\node(35) at (4,-2){1};
\node(36) at (5,-2.1){\underline{3}};
\node(37) at (6,-2.1){\underline{8}};
\node(38) at (7,-2){8};
\node(39) at (8,-2){5};
\node(40) at (9,-2){5};
\node(41) at (10,-2){1};
\node(42) at (11,-2.1){\underline{2}};
\node(43) at (12,-2){2};
\node(44) at (13,-2.1){\underline{4}};
\node(45) at (14,-2){3};
\node(46) at (15,-2.1){\underline{7}};
\node(47) at (16,-2){3};
\node(48) at (17,-2.1){\underline{9}};
\node(49) at (18,-2){2};
\node(50) at (19,-2.1){\underline{6}};
\node(51) at (20,-2){1};
\draw (2) -- (1) -- (3) -- (4) -- (6) -- (5);
\draw (3) -- (7) -- (8) -- (16) -- (9) -- (10) -- (14) -- (12)-- (11);
\draw (12) -- (13);
\draw (14) -- (15);
\draw (16) -- (17);
\draw (7) -- (18) -- (20) -- (19);
\draw (20) -- (21);
\draw[dashed, ultra thin, gray] (1) -- (31);
\draw[dashed, ultra thin, gray] (2) -- (32);
\draw[dashed, ultra thin, gray] (3) -- (33);
\draw[dashed, ultra thin, gray] (4) -- (34);
\draw[dashed, ultra thin, gray] (5) -- (35);
\draw[dashed, ultra thin, gray] (6) -- (36);
\draw[dashed, ultra thin, gray] (7) -- (37);
\draw[dashed, ultra thin, gray] (8) -- (38);
\draw[dashed, ultra thin, gray] (9) -- (39);
\draw[dashed, ultra thin, gray] (10) -- (40);
\draw[dashed, ultra thin, gray] (11) -- (41);
\draw[dashed, ultra thin, gray] (12) -- (42);
\draw[dashed, ultra thin, gray] (13) -- (43);
\draw[dashed, ultra thin, gray] (14) -- (44);
\draw[dashed, ultra thin, gray] (15) -- (45);
\draw[dashed, ultra thin, gray] (16) -- (46);
\draw[dashed, ultra thin, gray] (17) -- (47);
\draw[dashed, ultra thin, gray] (18) -- (48);
\draw[dashed, ultra thin, gray] (19) -- (49);
\draw[dashed, ultra thin, gray] (20) -- (50);
\draw[dashed, ultra thin, gray] (21) -- (51);
\end{tikzpicture}
\end{center}
  \caption{A Fishburn tree $T$ and the in-order sequence
    $x=\treetoseq(T)$. Nodes are spaced so that the
    in-order sequence is obtained by reading the labels of the nodes
    from left to right. Finally, labels of nodes in
    $\treetops(T)=\unseen(T)$, as well as the corresponding entries in
    $\asctops(x)=\nub(x)$, are underlined.}\label{figure_fishburn_tree}
\end{figure}
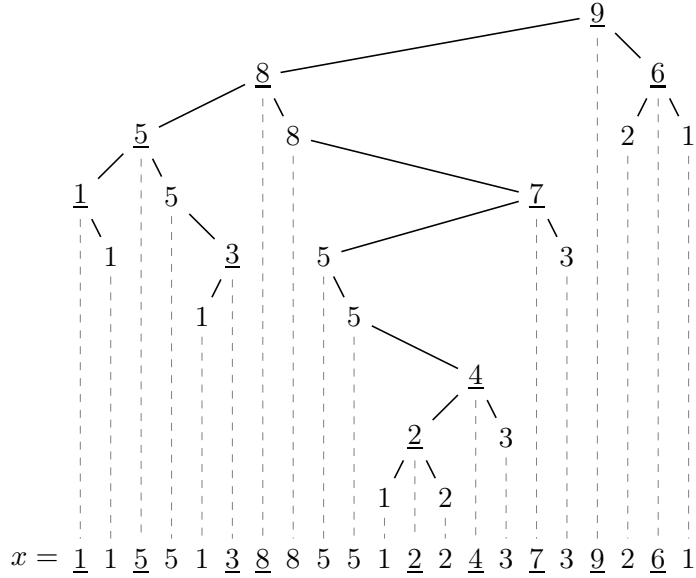

We are now ready to give the definition of Fishburn tree.

\begin{definition}
  A \emph{Fishburn tree} is a regular endotree~$T$ where $\unseen(T)=\treetops(T)$, and
  we denote by $\fishtrees$ the set of Fishburn trees.
\end{definition}

An example of a Fishburn tree is given in
Figure~\ref{figure_fishburn_tree}; an example of a non-Fishburn tree is
given in Figure~\ref{figure_non_fishburn} (on the right).  Five of the
13 endotrees of size 3 are Fishburn trees; they are highlighted in
Figure~\ref{figure_fubtrees}.  We continue this section with a couple of
simple lemmas concerning Fishburn trees.

\begin{lemma}\label{label_of_v1}
  If $T\in\fishtrees$, then $\vlabel(v_1)=1$.
\end{lemma}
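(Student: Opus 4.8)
The plan is to locate the leftmost occurrence of the label~$1$ in the in-order traversal and to argue that this occurrence must be the first visited node $v_1$. Since $T$ is a regular endotree, its labels form an interval $[k]$ for some $k$, so in particular $1\in\img(\vlabel)$ and the value~$1$ occurs somewhere in $T$. First I would let $v_j$ be the first node in the in-order traversal whose label is~$1$; that is, $j$ is the least index with $\vlabel(v_j)=1$. By the very definition of $\unseen(T)$, this node $v_j$ is the first occurrence of its label in the in-order sequence, and hence $v_j\in\unseen(T)$.

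Next I would bring in the Fishburn hypothesis. Since $T\in\fishtrees$ we have $\unseen(T)=\treetops(T)$, so $v_j\in\treetops(T)$. Recalling that $\treetops(T)=\{v_1\}\cup\{v_i:L(v_i)\neq\emptyset\}$, membership of $v_j$ in this set forces one of two alternatives: either $j=1$, or $L(v_j)\neq\emptyset$.

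The crucial step is to eliminate the second alternative using the strictly-decreasing-to-the-left property. Write the subtree rooted at $v_j$ as $T(v_j)=(L(v_j),v_j,R(v_j))$. If $L(v_j)$ were nonempty, then strict decrease to the left would give $\vlabel(v_j)>\max(L(v_j))$. But every label is a positive integer, so $\max(L(v_j))\geq 1$, whence $\vlabel(v_j)>1$, contradicting $\vlabel(v_j)=1$. Therefore $L(v_j)=\emptyset$, and the only surviving possibility is $j=1$. Consequently the first occurrence of~$1$ sits at the very first visited node, i.e.\ $\vlabel(v_1)=1$.

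I do not expect a genuine obstacle here: the single thing to get right is the choice of starting object, namely tracking the leftmost occurrence of the \emph{minimal} label~$1$ rather than reasoning about $v_1$ directly. Once that is in place, the equality $\unseen(T)=\treetops(T)$ and the strict-left-decrease condition combine immediately, the minimality of~$1$ among positive integers doing the real work. One could equivalently phrase the argument by contradiction—assuming $\vlabel(v_1)\neq 1$, so that the first occurrence of~$1$ lies at some $v_j$ with $j>1$, and then deriving the same clash with strict left-decrease—but the direct version above seems cleanest.
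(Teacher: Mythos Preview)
Your proof is correct and follows essentially the same approach as the paper's: locate the leftmost occurrence of the label~$1$, use $\unseen(T)=\treetops(T)$ to place it in $\treetops(T)$, and then rule out the possibility of a nonempty left subtree via the strictly-decreasing-to-the-left condition. Your version is simply more explicit in justifying that the label~$1$ occurs (regularity) and in spelling out the contradiction, but the argument is the same.
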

\begin{proof}
  Let $v_j\in\unseen(T)$ be the leftmost occurrence of $1$ in $T$. By
  definition of Fishburn tree we have $\unseen(T)=\treetops(T)$ and thus
  $v_j\in\treetops(T)$. Hence the disjunction $j=1$ or
  $L(v_j)\neq\emptyset$ holds true. The latter disjunct is, however, false
  since $\vlabel(v_j)=1$ and $T$ is strictly decreasing to the left.
\end{proof}

\begin{lemma}\label{labels_interval}
  Let $T\in\fishtrees$ and let $k=\max(T)$. Then
  $$
    |\treetops(T)|=k\quad\text{and}\quad\vlabel\bigl(\treetops(T)\bigr)=[k].
  $$
\end{lemma}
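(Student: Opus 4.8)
The plan is to reduce both assertions to statements about $\unseen(T)$ by invoking the defining property of a Fishburn tree, and then to read off the conclusions from the fact that $\unseen(T)$ records exactly the leftmost occurrences of labels. Since $T\in\fishtrees$, we have $\treetops(T)=\unseen(T)$ by definition, so it suffices to prove that $|\unseen(T)|=k$ and $\vlabel\bigl(\unseen(T)\bigr)=[k]$.

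First I would show that the restriction of $\vlabel$ to $\unseen(T)$ is a bijection onto $\img(\vlabel)$. By definition, a node $v_j$ lies in $\unseen(T)$ precisely when $\vlabel(v_j)$ does not occur among $\vlabel(v_1),\dots,\vlabel(v_{j-1})$; that is, $v_j$ is the leftmost occurrence of its label in the in-order sequence. Consequently each label value appearing in $T$ has exactly one representative in $\unseen(T)$, namely its first occurrence, so $\vlabel$ maps $\unseen(T)$ injectively onto the set of distinct labels, which is exactly $\img(\vlabel)$.

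Next I would invoke regularity. Because $T$ is a regular endotree, $\img(\vlabel)=[k']$ for some $k'$, and since $k=\max(T)=\max\img(\vlabel)$ we obtain $k'=k$, whence $\img(\vlabel)=[k]$. Combined with the previous step this gives $\vlabel\bigl(\unseen(T)\bigr)=[k]$, and the injectivity of $\vlabel$ on $\unseen(T)$ yields $|\unseen(T)|=|[k]|=k$. Finally, the Fishburn equality $\treetops(T)=\unseen(T)$ transfers both conclusions to $\treetops(T)$, as required.

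The argument is essentially bookkeeping, so I do not anticipate a serious obstacle; the only point requiring care is to deploy regularity at the right moment. The bijection between $\unseen(T)$ and the distinct labels holds for \emph{any} endotree, but the conclusion that these labels fill the \emph{whole} interval $[k]$ rather than some proper subset relies specifically on $T$ being regular, equivalently on $\treetoseq(T)$ being a Cayley permutation. Without regularity one would only obtain $|\unseen(T)|=|\img(\vlabel)|\le k$, so isolating exactly where regularity is used is the crux of keeping the proof correct.
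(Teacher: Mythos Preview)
Your proof is correct and follows essentially the same route as the paper: use regularity to get $\img(\vlabel)=[k]$, observe that $\unseen(T)$ contains exactly one node per label value, and then transfer the conclusion to $\treetops(T)$ via the defining equality $\unseen(T)=\treetops(T)$. The paper's version is simply a terser phrasing of the same argument.
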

\begin{proof}
  We have $\img(\vlabel)=[k]$. In particular, $\unseen(T)$ contains
  exactly one node with label $i$, for each $i\in [k]$. The claim then
  immediately follows from $\unseen(T)=\treetops(T)$.
\end{proof}

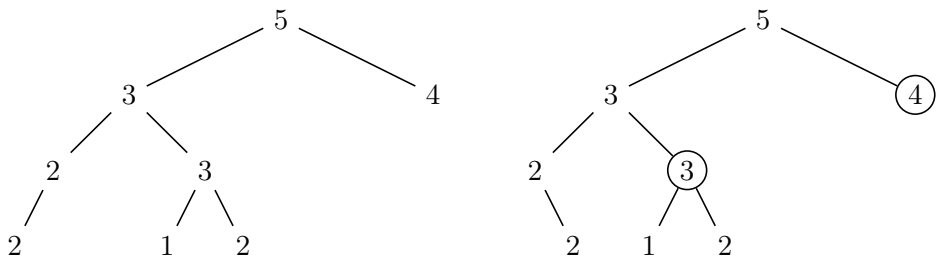
\begin{figure}
\begin{center}
\begin{tikzpicture}[scale = .5, semithick, baseline=0pt, level distance=2cm]
\tikzstyle{level 1}=[sibling distance=8cm]
\tikzstyle{level 2}=[sibling distance=4cm]
\tikzstyle{level 3}=[sibling distance=2cm]
\tikzstyle{level 4}=[sibling distance=1.5cm]
\node {5}
	child {node{3}
		child{node{2}
			child{node{2}}
			child[missing]{}			
			}
		child{node{3}
			child{node{1}}
			child{node{2}}
		}
	}
	child {node{4}}
;
\end{tikzpicture}
\qquad
\begin{tikzpicture}[scale = .5, semithick, baseline=0pt, level distance=2cm]
\tikzstyle{level 1}=[sibling distance=8cm]
\tikzstyle{level 2}=[sibling distance=4cm]
\tikzstyle{level 3}=[sibling distance=2cm]
\tikzstyle{level 4}=[sibling distance=1.5cm]
\node {5}
	child {node{3}
		child{node{2}
			child[missing]{}
			child{node{2}}			
			}
		child{node[draw, circle, inner sep=2pt]{3}
			child{node{1}}
			child{node{2}}
		}
	}
	child {node[draw, circle, inner sep=2pt]{4}}
;
\end{tikzpicture}\quad\mbox{}
\end{center}
\caption{The tree on the left is not an endotree since it is
not strictly decreasing to the left. The tree on the right is a regular
endotree, but not a Fishburn tree since $\unseen(T)\neq\treetops(T)$.
Indeed, if $u$ and $v$ are the distinguished nodes with $\vlabel(u)=3$ and
$\vlabel(v)=4$, respectively, then $u\in\treetops(T)\setminus\unseen(T)$
and $v\in\unseen(T)\setminus\treetops(T)$.
Note that the two trees have the same in-order sequence
$x=22313254$.}\label{figure_non_fishburn}
\end{figure}

A \emph{maximal right path} of a binary tree $T$ is a nonempty sequence of nodes
$W=(w_1,w_2,\dots,w_k)$ such that $w_{i+1}$ is the right child of $w_i$,
for each $i=1,\dots,k-1$; and $W$ is maximal in the sense that the first
node $w_1$ is not the right child of any node and the last node $w_k$
has no right child. \emph{Maximal left path} is defined analogously.
It is easy to see that for any node $v\in V(T)$ there is a unique
maximal right path to which $v$ belongs. Similarly, there is a unique
maximal left path to which $v$ belongs. We shall denote those by
$\rpath(v)$ and $\lpath(v)$, respectively. Furthermore, we define the
\emph{diagonal} of a nonempty binary tree $T=(L,r,R)$ by
$$
  \diag(T)=\lpath(r).
$$
Note that $\diag(T)\subseteq\treetops(T)$. We shall partition
$\treetops(T)$ accordingly as
$$
  \treetops(T)=\diag(T)\cup \nondiag(T),
$$
where $\nondiag(T)=\lbrace v\in\treetops(T): v\notin\diag(T)\rbrace$. We
shall also say that a node $v\in\treetops(T)$ is \emph{diagonal} if
$v\in\diag(T)$ and that it is \emph{non-diagonal} if $v\in\nondiag(T)$. Note
that $v_1$ is always diagonal.

Next we show that in a Fishburn tree the first node of a maximal right
path is either diagonal or the left child of a non-diagonal node.

\begin{lemma}\label{pathtop_partition}
  Let $W$ be a maximal right path of a Fishburn tree $T$ and let $w$ be
  the first node of $W$. Then either $w\in\diag(T)$ or $w$ is the left
  child of some $v\in\nondiag(T)$.
\end{lemma}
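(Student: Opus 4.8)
The plan is to exploit the structural meaning of being the first node of a maximal right path, and nothing more. Write $w=w_1$. By the maximality of $W$, the node $w$ is not the right child of any node, so $w$ is either the root $r$ of $T$ or the left child of some node $v\in\vertset(T)$. I would handle these two possibilities separately.

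If $w=r$, then $w$ is the topmost node of its own maximal left path, whence $w\in\lpath(r)=\diag(T)$ and the first disjunct holds. Suppose instead that $w$ is the left child of some node $v$. Then $L(v)\neq\emptyset$, so $v\in\treetops(T)$, and since $\treetops(T)=\diag(T)\cup\nondiag(T)$ exactly one of $v\in\diag(T)$ or $v\in\nondiag(T)$ occurs. In the case $v\in\nondiag(T)$ we are immediately done, as $w$ is then the left child of a non-diagonal node, which is precisely the second disjunct.

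It remains to treat the case $v\in\diag(T)=\lpath(r)$, and here I would show that $w$ itself lies in $\diag(T)$. The idea is that $\diag(T)$ is closed under passing to left children: the maximal left path from $r$ passes through $v$, and since $v$ has a left child it cannot terminate at $v$, so the path must continue to $w=\lchild(v)$. Hence $w\in\lpath(r)=\diag(T)$ and again the first disjunct holds. This closure property is the one point on which I would be most careful, but it is immediate from the definition of a maximal left path, so I do not anticipate a genuine obstacle.

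Notably, nothing in this argument invokes the defining property $\unseen(T)=\treetops(T)$ of a Fishburn tree; the reasoning is purely structural and would apply to any binary tree once $\diag$, $\nondiag$ and $\treetops$ have been defined. The hypothesis $T\in\fishtrees$ therefore appears to serve mainly to situate the lemma within the surrounding development rather than to drive the proof.
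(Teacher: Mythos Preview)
Your proof is correct and follows essentially the same approach as the paper's: both argue that $w$ is not a right child, split into the cases $w=r$ and $w=\lchild(v)$, and in the latter case use $v\in\treetops(T)=\diag(T)\cup\nondiag(T)$ together with the fact that $\diag(T)=\lpath(r)$ is closed under taking left children. Your added remark that the argument never uses the Fishburn condition $\unseen(T)=\treetops(T)$ is also accurate.
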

\begin{proof}
  Let $r$ be the root of $T$. Since $W$ is maximal, $w$ is not a right
  child. If $w$ is not a left child, then $w=r$ and thus
  $w\in\diag(T)$. Otherwise, $w$ is the left child of some
  $v\in\treetops(T)$. If $v\in\diag(T)$, then $w\in\diag(T)$ as well,
  since $\diag(T)=\lpath(r)$. Otherwise, $v\in\nondiag(T)$.
\end{proof}

The following corollary is an immediate consequence of
Lemma~\ref{pathtop_partition}.

\begin{corollary}\label{mrp_partition}
  For $T\in\fishtrees$ we have
  $$
    \vertset(T)=
    \!\bigcup_{v\in\diag(T)}\!\rpath(v)
    \;\;\cup\!
    \bigcup_{v\in\nondiag(T)}\!\rpath\bigl(\lchild(v)\bigr)
  $$
  where all the unions are disjoint and $\lchild(v)$\!
  denotes the left child of $v$.
\end{corollary}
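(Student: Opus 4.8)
The plan is to read the asserted identity as a reorganization of the fact, noted just before the corollary, that the maximal right paths of $T$ partition $\vertset(T)$: since every node lies on a unique maximal right path, we have the disjoint union $\vertset(T)=\bigsqcup_W W$, where $W$ ranges over all maximal right paths of $T$. The task is therefore to show that indexing these paths by their first nodes matches exactly the index set $\diag(T)\sqcup\{\lchild(v):v\in\nondiag(T)\}$ appearing on the right-hand side. First I would record that each maximal right path $W$ is determined by its first node $w$, the unique node of $W$ that is not a right child, so that $W=\rpath(w)$ and the assignment $W\mapsto w$ is a bijection from maximal right paths onto the set of non-right-child nodes.

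Lemma~\ref{pathtop_partition} then supplies the exhaustiveness half of the argument directly: each such first node $w$ is either in $\diag(T)$ or equals $\lchild(v)$ for some $v\in\nondiag(T)$. It remains to check that the two families on the right genuinely index \emph{distinct} paths, i.e.\ that these two cases are mutually exclusive and that the indexing within each case is injective. For the diagonal family, every $v\in\diag(T)$ is the root or a left child (as $\diag(T)=\lpath(r)$), hence not a right child, so it is the first node of $\rpath(v)$, and distinct diagonal nodes yield distinct paths. For the non-diagonal family, each $v\in\nondiag(T)$ lies in $\treetops(T)\setminus\{v_1\}$ and therefore has a nonempty left subtree, so $\lchild(v)$ is well-defined and is a non-right-child; since the left-child map is injective, distinct $v$ give distinct first nodes and hence distinct paths.

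The main point to nail down is disjointness \emph{between} the two families, and I expect the crux to be showing that a diagonal node can never be the left child of a non-diagonal node. This is short: if a diagonal node $v$ is a left child, then its parent lies on $\lpath(r)=\diag(T)$ and is therefore diagonal, contradicting membership in $\nondiag(T)$. Consequently the first nodes coming from $\diag(T)$ are disjoint from those of the form $\lchild(v)$ with $v\in\nondiag(T)$.

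Putting these together, the bijection $W\mapsto w$ restricts to a bijection from the maximal right paths of $T$ onto $\diag(T)\sqcup\{\lchild(v):v\in\nondiag(T)\}$: exhaustiveness comes from Lemma~\ref{pathtop_partition}, and injectivity plus cross-family disjointness from the observations above. Substituting $W=\rpath(v)$ for $v\in\diag(T)$ and $W=\rpath(\lchild(v))$ for $v\in\nondiag(T)$ into the path partition $\vertset(T)=\bigsqcup_W W$ then yields the claimed disjoint union, and I would present the whole argument in this order.
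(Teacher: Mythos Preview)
Your proposal is correct and follows exactly the approach the paper intends: the paper simply declares the corollary ``an immediate consequence of Lemma~\ref{pathtop_partition}'' without further detail, and your argument spells out precisely why---using that the maximal right paths partition $\vertset(T)$, that Lemma~\ref{pathtop_partition} exhausts the possible first nodes, and that the two families are disjoint because a diagonal node's parent is diagonal. There is nothing to correct; your write-up is just a careful unpacking of what the paper leaves implicit.
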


We refer to the partition of $\vertset(T)$ induced by its maximal right
paths as the \emph{$\rpath$-decomposition} of $T$. Furthermore, we say that a
maximal right path $W$ is \emph{diagonal} if its first node is diagonal;
otherwise, if the first node is the left child of a non-diagonal node,
$W$ is \emph{non-diagonal}.

By Corollary~\ref{mrp_partition} each maximal right path $W$ of $T$ can
be associated with a node in $\treetops(T)$ in the following manner:
\begin{itemize}
\item If $W$ is diagonal, then it is associated with its first node,
  which is a diagonal node.
\item If $W$ is non-diagonal, then it is associated with the father
  of its first node, which is a non-diagonal node.
\end{itemize}

Conversely, each node in $\treetops(T)$ determines a unique maximal
right path this way. The correspondence between maximal right paths of
$T$ and $\treetops(T)$ described above is thus bijective. Now, recall
from Lemma~\ref{labels_interval} that $|\treetops(T)|=k$ and
$\vlabel\bigl(\treetops(T)\bigr)=[k]$, where $k=\max(T)$. In particular,
there are exactly $k$ maximal right paths in $T$. Moreover, if $W$ is a
maximal right path and $v$ is its associated node in $\treetops(T)$, then
we can assign the integer $\vlabel(v)\in[k]$ to it. Let $W_i$ be the
maximal right path assigned to $i\in[k]$ in this manner.
In particular, $W_k=\rpath(r)$, where $r$ is the root of $T$.
As an example, the maximal right paths of the Fishburn tree in
Figure~\ref{figure_fishburn_tree} are illustrated in
Figure~\ref{figure_ellipses}. Here below, to encode them more compactly,
we abuse notation and write down the corresponding labels:
\[
  \begin{array}{lll}
    W_1=(1,1) & W_4=(2,2)   & W_7=(5,5,4,3) \\
    W_2=(1)   & W_5=(5,5,3) & W_8=(8,8,7,3) \\
    W_3=(1)   & W_6=(2)     & W_9=(9,6,1).  \\
  \end{array}
\]

\begin{definition}\label{def_of_b}
  For each node $u\in\vertset(T)$ we let $\blabel(u)$ be the index of
  the maximal right path that contains $u$; i.e.\ $u\in W_{\blabel(u)}$.
\end{definition}

\begin{figure}
\begin{center}
\begin{tikzpicture}[scale = .4, semithick, baseline=0pt]
\node(1)  at (0,10){\underline{1}};
\node(2)  at (1,8){1};
\node(3)  at (2,12){\underline{5}};
\node(4)  at (3,10){5};
\node(5)  at (4,6){1};
\node(6)  at (5,8){\underline{3}};
\node(7)  at (6,14){\underline{8}};
\node(8)  at (7,12){8};
\node(9)  at (8,8){5};
\node(10) at (9,6){5};
\node(11) at (10,0){1};
\node(12) at (11,2){\underline{2}};
\node(13) at (12,0){2};
\node(14) at (13,4){\underline{4}};
\node(15) at (14,2){3};
\node(16) at (15,10){\underline{7}};
\node(17) at (16,8){3};
\node(18) at (17,16){\underline{9}};
\node(19) at (18,12){2};
\node(20) at (19,14){\underline{6}};
\node(21) at (20,12){1};
\draw (2) -- (1) -- (3) -- (4) -- (6) -- (5);
\draw (3) -- (7) -- (8) -- (16) -- (9) -- (10) -- (14) -- (12)-- (11);
\draw (12) -- (13);
\draw (14) -- (15);
\draw (16) -- (17);
\draw (7) -- (18) -- (20) -- (19);
\draw (20) -- (21);
\node[scale=1.25] at (-1.75,10.75){$W_{1}$};
\draw[fill=gray, opacity=0.15] (0.5,9) circle [x radius=0.75, y radius=2, rotate=30];
\node[scale=1.25] at (0.5,14){$W_{5}$};
\draw[fill=gray, opacity=0.15] (3.35,10) circle [x radius=0.75, y radius=4, rotate=35];
\node[scale=1.25] at (3,5){$W_{3}$};
\draw[fill=gray, opacity=0.15] (4,6) circle [x radius=0.5, y radius=1, rotate=30];
\node[scale=1.25] at (5.5,16){$W_{8}$};
\draw[fill=gray, opacity=0.15] (11,11) circle [x radius=2, y radius=7, rotate=60];
\node[scale=1.25] at (15,4){$W_{7}$};
\draw[fill=gray, opacity=0.15] (11,5) circle [x radius=1.25, y radius=5.5, rotate=45];
\node[scale=1.25] at (9.25,2.75){$W_{4}$};
\draw[fill=gray, opacity=0.15] (11.5,1) circle [x radius=0.75, y radius=2, rotate=30];
\node[scale=1.25] at (8.25,0.5){$W_{2}$};
\draw[fill=gray, opacity=0.15] (10,0) circle [x radius=0.5, y radius=1, rotate=30];
\node[scale=1.25] at (15,17){$W_{9}$};
\draw[fill=gray, opacity=0.15] (18.65,14) circle [x radius=0.75, y radius=4, rotate=35];
\node[scale=1.25] at (16.25,13){$W_{6}$};
\draw[fill=gray, opacity=0.15] (18,12) circle [x radius=0.5, y radius=1, rotate=30];
\end{tikzpicture}
\end{center}
\caption{The $\rpath$-decomposition of the Fishburn tree of 
Figure~\ref{figure_fishburn_tree}. Here $\diag(T)$ contains
the underlined nodes with label $1$, $5$, $8$ and $9$.
Thus the paths $W_1$, $W_5$, $W_8$ and $W_9$ are diagonal,
while $W_2$, $W_3$, $W_4$, $W_6$ and $W_7$ are non-diagonal.
}\label{figure_ellipses}
\end{figure}
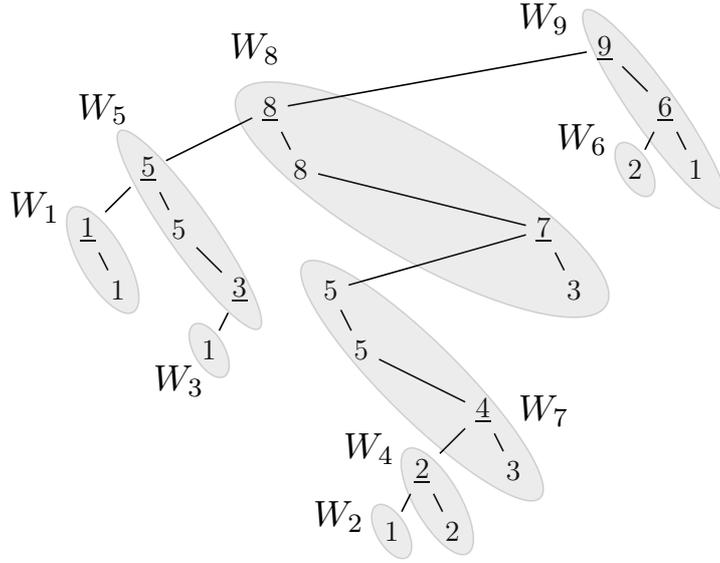

The label $\blabel(u)$ can be recursively computed as follows. The label
of the root $r$ of $T$ is $\blabel(r)=\vlabel(r)$, and for
$v\in\vertset(T)$, $v\neq r$, we have
\begin{equation}\label{eqn:recursive_def_of_b}
  \blabel(v) =
  \begin{cases}
    \blabel(u) & \text{if $v=\rchild(u)$}, \\
    \vlabel(v) & \text{if $v=\lchild(u)$ and $u\in\diag(T)$}, \\
    \vlabel(u) & \text{if $v=\lchild(u)$ and $u\in\nondiag(T)$}.
  \end{cases}
\end{equation}
Here, $\lchild(u)$ denotes the left child of $u$, and $\rchild(u)$
denotes the right child of $u$.  See Figure~\ref{figure_rules_blabel}
for an illustration of these rules.

\begin{figure}
  $$
  \begin{array}{c|c|c}
    \begin{tikzpicture}[scale = .5, semithick, baseline=0pt, level distance=2cm]
      \node {$u$} child {node at (1.5,0){$v$}};
      \node at (0.5,-3.5){$\blabel(v)=\blabel(u)$};
    \end{tikzpicture}
    \quad
    &
    \quad
    \begin{tikzpicture}[scale = .5, semithick, baseline=0pt, level distance=2cm]
      \node[white] at (0,0) {$u$} child {node at (-1.5,0){$v$}};
      \node at (1.76,0.035) {$u \in\diag(T)$};
      \node at (1,-3.5){$\blabel(v)=\vlabel(v)$};
    \end{tikzpicture}
    \quad
    &
    \quad
    \begin{tikzpicture}[scale = .5, semithick, baseline=0pt, level distance=2cm]
      \node[white] at (0,0) {$u$} child {node at (-1.5,0){$v$}};
      \node at (1.76,0.089) {$u \in\nondiag(T)$};
      \node at (1,-3.5){$\blabel(v)=\vlabel(u)$};
    \end{tikzpicture}
  \end{array}
  $$
  \caption{Rules to determine the label $\blabel(v)$.}\label{figure_rules_blabel}
\end{figure}

Let $B_i$ be a multiset containing a copy
of the integer $j$ for each node with label $j$ in $W_i$. That is, $B_i$
is the multiset $\lbrace\vlabel(u): u\in W_i\rbrace$.
Given a Fishburn tree $T$, we denote by $\pairs(T)$ the
list of multisets
$$
  \pairs(T)=B_1\dots B_k
$$
defined this way. Note that $\bigcup_{i\in [k]} B_i=[k]$, since
$\vertset(T)=[k]$. Furthermore, $j\le i$ for each $j\in B_i$,
since $T$ is decreasing.

\begin{definition}
  An ordered collection of $k$ nonempty multisets $P=B_1\dots B_k$ is
  a \emph{Fishburn cover} if the following two conditions are satisfied:
  \begin{itemize}
  \item $\displaystyle{\bigcup_{i\in [k]} B_i=[k]}$;
  \item for all $i\in [k]$, if $j\in B_i$, then $j\le i$.
  \end{itemize}
\end{definition}

As noted above, the $\rpath$-decomposition of a Fishburn tree $T$
determines a Fishburn cover $\pairs(T)$. For instance, the Fishburn
cover of the tree in Figure~\ref{figure_fishburn_tree} is
$$
  \pairs(T)=\lbrace 1,1 \rbrace\lbrace 1\rbrace\lbrace 1\rbrace
  \lbrace 2,2\rbrace\lbrace 5,5,3\rbrace\lbrace 2\rbrace
  \lbrace 5,5,4,3\rbrace\lbrace 8,8,7,3\rbrace\lbrace 9,6,1\rbrace,
$$
where, for reasons that will become clear later, the elements of a block
are written in weakly decreasing order. In Theorem~\ref{pairs_to_tree}, below, we show
that the converse is true as well; that is, every Fishburn cover
uniquely determines the $\rpath$-decomposition of a Fishburn tree. First
a simple lemma.

\begin{lemma}\label{diag_nondiag_paths}
  Let $T$ be a Fishburn tree and let $\pairs(T)=B_1B_2\dots B_k$ be the
  Fishburn cover of $T$. Then, for each $i\in [k]$,
  $$
    i\in B_i\;\iff\; \text{$W_i$ is diagonal}.
  $$
\end{lemma}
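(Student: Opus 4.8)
The plan is to prove both implications by directly unwinding the correspondence between maximal right paths and nodes of $\treetops(T)$, and then leaning on the two monotonicity properties of a Fishburn tree: strict decrease to the left and weak decrease everywhere (in particular along right paths).

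For the direction ($\Leftarrow$), suppose $W_i$ is diagonal. By the bijective assignment described after Corollary~\ref{mrp_partition}, $W_i$ is associated with its own first node $v$, a diagonal node, and the label attached to this path is $\vlabel(v)=i$. Since $v$ is the first node of $W_i$ we have $v\in W_i$, and therefore $i=\vlabel(v)\in B_i$ by the definition of $B_i$ as the multiset $\{\vlabel(u):u\in W_i\}$. This direction is essentially immediate.

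For the direction ($\Rightarrow$), I would argue by contraposition: assume $W_i$ is non-diagonal and show $i\notin B_i$. If $W_i$ is non-diagonal, then its first node $w$ is the left child of its associated node $v\in\nondiag(T)$, and the label attached to $W_i$ is $\vlabel(v)=i$. The crucial point is that $v$ itself does \emph{not} lie on $W_i$; only $w$ and its right descendants do. Since $w=\lchild(v)$ and $T$ is strictly decreasing to the left, we get the strict inequality $\vlabel(w)<\vlabel(v)=i$. Now every other node of $W_i$ is a right descendant of $w$, and since $T$ is decreasing, labels weakly decrease along any root-to-leaf path, hence in particular along the right path $W_i$. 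Therefore $\vlabel(u)\le\vlabel(w)<i$ for every $u\in W_i$, so no node of $W_i$ carries the label $i$, i.e.\ $i\notin B_i$.

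I do not anticipate a serious obstacle here; the proof is short once the two regimes of the path-to-node assignment are spelled out. The one point that requires care — and which I would state explicitly to avoid a gap — is the observation that in the non-diagonal case the node $v$ realizing $\vlabel(v)=i$ is the \emph{father} of $w$ and hence lies outside $W_i$; it is precisely this fact, combined with the strict left-decrease at the single step from $v$ to $w$, that forces every label on $W_i$ to be strictly below $i$. Combining the two implications yields the stated equivalence $i\in B_i\iff W_i$ is diagonal.
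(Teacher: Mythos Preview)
Your proof is correct and follows essentially the same approach as the paper's: both argue that the first node of $W_i$ carries the label $i$ precisely when $W_i$ is diagonal, and your contrapositive for the forward direction (using strict left-decrease at the step $v\to w$ followed by weak decrease along the right path) is exactly the content hidden in the paper's one-line justification. The only difference is that the paper compresses the argument into a single sentence, whereas you spell out both cases explicitly.
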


\begin{proof}
  The maximal right path $W_i$ is diagonal if and only if $i$ is the
  label of the first node of $W_i$, that is $i\in B_i$.
\end{proof}

\begin{figure}
\begin{tabular}{cc}
$\begin{tikzpicture}[scale = .45, semithick, baseline=0pt, level distance=2cm]
        \tikzstyle{level 1}=[sibling distance=8cm]
        \tikzstyle{level 2}=[sibling distance=4cm]
        \tikzstyle{level 3}=[sibling distance=2cm]
        \tikzstyle{level 4}=[sibling distance=1.5cm]
        \node {7}
        child {node{5}
            child{node{2}
                child{node{1}}
                child{node{1}}
              }
            child{node{4}
                child[missing]{}
                child{node{2}}
              }
          }
        child {node{6}
            child[missing]{}
            child{node{3}}
          }
        ;
        \node at(0,-9.5)[text width=6.25cm]{Step 0: The
          comb-shaped tree $T_0$ arising from the diagonal blocks
          $B_1=\lbrace 1\rbrace$, $B_2=\lbrace 2,1\rbrace$,
          $B_5=\lbrace 5,4,2\rbrace$ and $B_7=\lbrace 7,6,3\rbrace$.};
      \end{tikzpicture}$
&
$\begin{tikzpicture}[scale = .45, semithick, baseline=0pt, level distance=2cm]
        \tikzstyle{level 1}=[sibling distance=8cm]
        \tikzstyle{level 2}=[sibling distance=4cm]
        \tikzstyle{level 3}=[sibling distance=2cm]
        \tikzstyle{level 4}=[sibling distance=1.5cm]
        \node {7}
        child {node{5}
            child{node{2}
                child{node{1}}
                child{node{1}}
              }
            child{node{4}
                child[missing]{}
                child{node{2}}
              }
          }
        child {node{6}
            child{node{\underline{5}}
                child[missing]{}
                child{node{\underline{3}}
                    child[missing]{}
                    child{node{\underline{2}}}
                  }
              }
            child{node{3}}
          }
        ;
        \node at(0,-10)[text width=6.25cm]{Step 1: $T_1$ is
          obtained by attaching $W_6$ (i.e.\ $B_6=\lbrace 5,3,2\rbrace$)
          to $T_0$.};
      \end{tikzpicture}$
\\
$\begin{tikzpicture}[scale = .45, semithick, baseline=0pt, level distance=2cm]
        \tikzstyle{level 1}=[sibling distance=8cm]
        \tikzstyle{level 2}=[sibling distance=4cm]
        \tikzstyle{level 3}=[sibling distance=2cm]
        \tikzstyle{level 4}=[sibling distance=1.5cm]
        \node {7}
        child {node{5}
            child{node{2}
                child{node{1}}
                child{node{1}}
              }
            child{node{4}
                child{node{\underline{2}}
                    child[missing]{}
                    child{node{\underline{1}}}
                  }
                child{node{2}}
              }
          }
        child {node{6}
            child{node{5}
                child[missing]{}
                child{node{3}
                    child[missing]{}
                    child{node{2}}
                  }
              }
            child{node{3}}
          }
        ;
        \node at(0,-10)[text width=6cm]{Step 2: $T_2$ is
          obtained by attaching $W_4$ (i.e.\ $B_4=\lbrace 2,1\rbrace$) to
          $T_1$.};
      \end{tikzpicture}$
&
$\begin{tikzpicture}[scale = .45, semithick, baseline=0pt, level distance=2cm]
        \tikzstyle{level 1}=[sibling distance=8cm]
        \tikzstyle{level 2}=[sibling distance=4cm]
        \tikzstyle{level 3}=[sibling distance=2cm]
        \tikzstyle{level 4}=[sibling distance=1.5cm]
        \node {7}
        child {node{5}
            child{node{2}
                child{node{1}}
                child{node{1}}
              }
            child{node{4}
                child{node{2}
                    child[missing]{}
                    child{node{1}}
                  }
                child{node{2}}
              }
          }
        child {node{6}
            child{node{5}
                child[missing]{}
                child{node{3}
                    child{node{\underline{2}}}
                    child{node{2}}
                  }
              }
            child{node{3}}
          }
        ;
        \node at(0,-10)[text width=6cm]{Step 3: $T=T_3$ is
        obtained by attaching $W_3$ (i.e.\ $B_3=\lbrace 2\rbrace$)
        to $T_2$.};
      \end{tikzpicture}$
\end{tabular}
  \caption{The step-by-step construction of Theorem~\ref{pairs_to_tree}
    on the Fishburn cover
    $\pairs(T)=\lbrace 1\rbrace\lbrace 2,1\rbrace\lbrace 2\rbrace\lbrace
    2,1\rbrace\lbrace 5,4,2\rbrace\lbrace 5,3,2\rbrace \lbrace
    7,6,3\rbrace$. At each step, nodes in the newly appended path are
    underlined. Observe that, at the last step, $W_3$ is attached to the
    leftmost occurrence of $3$ in $T_2$. This is an example where the
    ordering in which the paths are attached matters. Indeed, if we had
    started by appending $W_3$ to $T_0$, then the first node of $W_3$
    would have been     attached to a different node (the only other node
    labeled~$3$) and the result would not have been a Fishburn tree.
    }\label{figure_comb_tree}
\end{figure}

\begin{theorem}\label{pairs_to_tree}
  For each Fishburn cover $P$, there is a unique Fishburn tree $T$ such
  that $\pairs(T)=P$.
\end{theorem}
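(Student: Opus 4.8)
The plan is to prove that $\pairs\colon\fishtrees\to\{\text{Fishburn covers}\}$ is a bijection; since it is already known (just before the theorem) that $\pairs$ lands among Fishburn covers, surjectivity gives existence and injectivity gives uniqueness. To this end I would turn the construction sketched in Figure~\ref{figure_comb_tree} into an explicit map $\Phi$ and show it is a two-sided inverse of $\pairs$. Given a Fishburn cover $P=B_1\cdots B_k$, write each block in weakly decreasing order, so that $B_i$ determines a right path $W_i$ (a chain of right children carrying the entries of $B_i$). By Lemma~\ref{diag_nondiag_paths} the \emph{diagonal} blocks are exactly those with $i\in B_i$; note $k\in B_k$ always, so $B_k$ is diagonal. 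First assemble the comb $T_0$ whose left spine carries the first nodes of the diagonal paths, listed by decreasing index from the root $d_1=k$ down to the smallest diagonal index $d_t$, each spine node $d_s$ carrying its right path $W_{d_s}$. Then process the non-diagonal indices in \emph{decreasing} order, attaching each $W_i$ as the left child of the current leftmost occurrence of $i$.

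The first task is well-definedness: when we come to attach $W_i$, the leftmost occurrence of $i$ must exist and have no left child. Existence is immediate, since $i\in\bigcup_j B_j=[k]$ forces $i\in B_j$ for some $j$, and then $j\ge i$ with $j\neq i$ (as $i\notin B_i$ for non-diagonal $i$), so the path $W_j$ with $j>i$ is already in place. For the absence of a left child I would carry the invariant that the left spine of the partial tree never changes, remaining equal to $d_1,\dots,d_t$: the spine labels are the diagonal indices, all distinct from the non-diagonal index $i$, so the leftmost occurrence of $i$ is off the spine and attaching to it does not disturb $\lpath$ of the root. Moreover, at this stage the only nodes carrying a left child are the spine nodes (labels equal to diagonal indices) and the fathers of previously attached non-diagonal paths (labels equal to already processed indices $>i$); since none of these labels equals $i$, no node labeled $i$ yet has a left child, and in particular the leftmost one does not.

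Next I would check that $T:=\Phi(P)$ is a Fishburn tree. It is strictly decreasing to the left because each left turn either descends the spine ($d_{s+1}<d_s$) or enters a non-diagonal $W_i$ whose top label is $\max B_i<i$, the label of its father; regularity holds because $\vertset(T)=\bigcup_i B_i=[k]$. For the defining condition $\unseen(T)=\treetops(T)$, observe that the nodes bearing a left child are precisely the spine nodes $d_1,\dots,d_{t-1}$ and the non-diagonal fathers, so together with $v_1=d_t$ we obtain exactly one node of each label in $[k]$. Each non-diagonal father is the leftmost occurrence of its label by construction, and a short in-order computation shows each spine node $d_s$ is the leftmost occurrence of $d_s$: everything preceding it in the in-order reading lies in the subtree rooted at the lower spine node $d_{s+1}$, all of whose labels are $\le d_{s+1}<d_s$. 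Hence $\treetops(T)\subseteq\unseen(T)$, and as both sets have size $k$ they coincide. Reading off the maximal right paths of $T$ then recovers exactly the $W_i$ with their indices (diagonal paths indexed by their first node, non-diagonal ones by their non-diagonal father), so $\pairs(\Phi(P))=P$.

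Finally, for injectivity I would show $\Phi(\pairs(T))=T$ for every $T\in\fishtrees$, which forces uniqueness. The diagonal part of $T$ is its left spine $\lpath(\text{root})=\diag(T)$ together with the right paths hanging from it, and this is exactly the comb produced by $\Phi$. For the non-diagonal paths I use the Fishburn condition: the father of a non-diagonal path lies in $\nondiag(T)\subseteq\treetops(T)=\unseen(T)$, hence is the leftmost occurrence of its label. It then remains to see that the leftmost occurrence computed by $\Phi$ inside its partial tree agrees with the one in $T$. By downward induction the partial tree before processing $i$ is obtained from $T$ by pruning the non-diagonal paths of index $\le i$; these form a downward-closed set of nodes all of whose labels are $<i$, so pruning them removes no occurrence of $i$ and does not reorder the surviving nodes in in-order. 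Thus the leftmost occurrence of $i$ is the same in both trees, $\Phi$ attaches $W_i$ at the correct node, and the induction closes. I expect the main obstacle to be exactly this control of the \emph{leftmost occurrence} through the process: one must show both that the spine is frozen and that leftmost occurrences are insensitive to the not-yet-attached small-index paths, and it is precisely the decreasing processing order together with $\unseen(T)=\treetops(T)$ that makes this work.
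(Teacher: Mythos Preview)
Your proposal is correct and follows essentially the same approach as the paper's proof: both build the comb $T_0$ from the diagonal paths, attach the non-diagonal paths in decreasing order of index to the current leftmost occurrence, and verify well-definedness, the Fishburn property, and uniqueness via the same inductive control of leftmost occurrences (your ``pruning'' argument is the paper's Claim~4 in contrapositive form). The only differences are cosmetic---you phrase uniqueness as $\Phi\circ\pairs=\mathrm{id}$ on $\fishtrees$ rather than arguing directly, and there is a small slip where you write ``$\vertset(T)=\bigcup_i B_i=[k]$'' but mean $\img(\vlabel)=[k]$.
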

\begin{proof}
  Let $P=B_1\dots B_k$ be a Fishburn cover.  We will construct, in
  multiple steps, a Fishburn tree $T$ such that $\pairs(T)=P$. For each
  $i,j\in [k]$, let $m_i(j)$ be the multiplicity of $j$ in $B_i$.
  Construct a decreasing binary tree $W_i$ consisting of a single right
  path with $|B_i|$ nodes in total and $m_i(j)$ nodes labeled $j$. It is
  easy to see that $\pairs(T)=P$ if and only if the
  $\rpath$-decomposition of $T$ is given by the paths $W_1$, \dots,
  $W_k$.

  Let $D=\lbrace i\in [k]: i\in B_i\rbrace$ and
  $\bar{D}=\lbrace i\in [k]: i\notin B_i\rbrace$.  Due to
  Lemma~\ref{diag_nondiag_paths}, we want to construct our tree in such
  a way that $W_i$ is diagonal if $i\in D$, and non-diagonal if
  $i\in\bar{D}$. We start by arranging the diagonal paths
  $\{W_i: i\in D\}$ in a comb-shaped decreasing binary tree tree
  $T_0$ (see Figure~\ref{figure_comb_tree}) so that
  $\diag(T_0)=\lbrace w_i: i\in D\rbrace$, where $w_i$ is the first node
  of $W_i$.

  To attach the remaining non-diagonal paths, $\{W_i: i\in \bar{D}\}$,
  we will specify an iterative procedure. Suppose that we are in $s$-th
  step of this procedure and that we have already constructed a tree
  $T_{s-1}$. Due to the equality $\unseen(T)=\treetops(T)$ defining
  Fishburn trees, we shall attach $W_i$, with $i\in\bar{D}$, to
  $T_{s-1}$ so that $w_i$ becomes the left child of the leftmost
  occurrence of $i$. To make sure that the procedure is well-defined and
  that the desired property is preserved, we start with the largest
  index in $\bar{D}$ and proceed in decreasing order. Once again, we
  refer to Figure~\ref{figure_comb_tree} for a step-by-step illustration
  of this construction.  Assume that
  $\bar{D}=\lbrace j_1,j_2,\dots,j_m\rbrace$ with
  $j_1>j_2>\dots>j_m$. For $s=1,2,\dots,m$:
  \begin{itemize}
  \item Let $y_s$ be the leftmost occurrence of $j_s$ in $T_{s-1}$.
  \item Let $T_s$ be the tree obtained by attaching the path $W_{j_s}$
    to $T_{s-1}$ so that $w_{j_s}$---the first node of
    $W_{j_s}$---becomes the left child of $y_s$.
  \end{itemize}
  For the succession of trees $T_0$, $T_1$, \dots, $T_m$ to be
  well-defined we need to verify that, for $s=1,2,\dots,m$,
  \begin{enumerate}
  \item The tree $T_{s-1}$ contains at least one node with label $j_s$.
  \item The node $y_s$, whose label is the leftmost occurrence of $j_s$
    in $T_{s-1}$, has no left child.
  \end{enumerate}
  To prove the first property, note that, since $P$ is a Fishburn cover,
  we have
  $$
    [k]=\bigcup_{i\in [k]}B_i=\bigcup_{i\in [k]}\vlabel(W_i).
  $$
  Thus at least one path, say $W_t$, contains a node with label
  $j_s$. If $t\in D$, then $W_t$ is contained in $T_0$. On the other
  hand, suppose that $t=j_q\in\bar{D}$, for some $q$. Note that $q<s$,
  or else $j_q\le j_s$ and $\vlabel(u)<j_q\le j_s$ for each
  $u\in W_{j_q}$, contradicting the assumption that $W_t$ contains
  a node with label $j_s$. Hence $W_{j_q}$ is contained in
  $T_q$, with $q<s$. In both cases $T_{s-1}$ contains at least one node
  with label $j_s$ and hence $y_s$ is well-defined.

  To prove the second property, note that
  $$
    \treetops(T_{s-1})=\lbrace w_i:i\in D\rbrace\cup\lbrace y_1,\dots, y_{s-1}\rbrace
  $$
  and no node in $\treetops(T_{s-1})$ has label $j_s$.

  Thus the succession of trees $T_0$, $T_1$, \dots, $T_m$ is
  well-defined and we let $T=T_m$. It remains to show that $T$ is a
  Fishburn tree and that it is the only Fishburn tree that satisfies
  $\pairs(T)=P$. The proof is divided into four parts corresponding
  to following claims, in which $s\in \{0,1,\dots,m\}$:
  \begin{enumerate}
  \item $\nondiag(T_s)=\lbrace y_1,\dots, y_s\rbrace$;
  \item $\treetops(T_s)\subseteq\unseen(T_s)$;
  \item $T=T_m$ is a Fishburn tree;
  \item $T$ is the only Fishburn tree such that $\pairs(T)=P$.
  \end{enumerate}

  \textit{Proof of Claim 1.} Note that $\nondiag(T_0)=\emptyset$.
  Assume that $s\in [m]$ and
  $\nondiag(T_{s-1})=\lbrace y_1,\dots, y_{s-1}\rbrace$.
  Now, $T_s$ is obtained from $T_{s-1}$ by attaching $W_{j_s}$ to
  $T_{s-1}$ so that $w_{j_s}$ becomes the left child of $y_s$.
  Hence,
  $$
    \nondiag(T_s)=\nondiag(T_{s-1})\cup\lbrace y_s \rbrace=
    \lbrace y_1,\dots, y_{s-1},y_s\rbrace.
  $$

  \textit{Proof of Claim 2.} Note that
  $\treetops(T_0)=\diag(T_0)\subseteq\unseen(T_0)$.
  Assume that $s\in [m]$ and
  $\treetops(T_{s-1})\subseteq\unseen(T_{s-1})$. Now,
  $$
    \treetops(T_s)=\treetops(T_{s-1})\cup\lbrace y_s\rbrace.
  $$
  Observe that $y_s\in\unseen(T_s)$ by construction. Furthermore, each
  node $u$ in the newly attached path $W_{j_s}$ has label
  $\vlabel(u)<j_s$ due to Lemma~\ref{diag_nondiag_paths} and
  the definition of Fishburn cover. Thus,
  $\vlabel(u)<j_s<j_q$ for each $q<s$ and hence
  $\lbrace y_1,\dots,y_{s-1}\rbrace\subseteq\unseen(T_s)$. Therefore,
  $\nondiag(T_s)=\lbrace y_1,\dots,y_{s-1},y_s\rbrace\subseteq\unseen(T_s)
  $.
  To obtain the desired inclusion $\treetops(T_s)\subseteq\unseen(T_s)$
  it suffices to prove that $\diag(T_s)\subseteq\unseen(T_s)$. Consider
  the (only) path $Q$ from the root of $T_s$ to $y_s$ and let $i$ be the
  label of the last diagonal top contained in $Q$ (see
  Figure~\ref{figure_claimII}).
  Let $v\in\diag(T_s)$. Recall that $v\in\unseen(T_{s-1})$. If
  $\vlabel(v)\le i$, then $v$ precedes each node of $W_{j_s}$ in the
  in-order traversal of $T_s$ and thus $v\in\unseen(T_s)$. On the other
  hand, if $\vlabel(v)>i$, then $\vlabel(u)<i<\vlabel(v)$ for each
  $u\in W_{j_s}$, hence we have $v\in\unseen(T_s)$ once again.

  \textit{Proof of Claim 3.} Note that $T$ is decreasing and strictly
  decreasing to the left by construction. Moreover, due to what we proved
  above, we have
  $$
    \treetops(T)\subseteq\unseen(T)\quad\text{and}\quad
    |\treetops(T)|=k.
  $$
  Consequently, $|\unseen(T)|=k$ as well, from which the equality
  $\treetops(T)=\unseen(T)$ follows and hence $T$ is a Fishburn tree.
  
  \textit{Proof of Claim 4.} Let $T'$ be a Fishburn tree with
  $\pairs(T')=P$. We will show that $T'=T$. Since $\pairs(T')=\pairs(T)$,
  the $\rpath$-decomposition of $T'$ is given by the same paths
  $W_1,\dots,W_k$. In particular, in $T'$ the diagonal paths
  $\lbrace W_i: i\in D\rbrace$ must be arranged in a comb-shaped tree
  $T'_0$ such that $T'_0=T_0$. We wish to prove that each of the
  remaining paths $\lbrace W_i: i\in\bar{D}\rbrace$ is attached to the
  same node as in $T$. That is, for $s=1,\dots,m$, the path $W_{j_s}$
  is attached to the leftmost occurrence $y_s$ of $j_s$ in $T_{s-1}$,
  where $\bar{D}=\lbrace j_1,\dots,j_m\rbrace$ and $j_1>j_2>\cdots>j_m$.
  Consider the path $W_{j_1}$. Note that $\vlabel(u)<j_1$ for each
  $u\in W_{j_t}$ and $t\ge 1$, hence there are no nodes with label $j_1$
  in the paths $W_{j_1},\dots,W_{j_m}$. Therefore, $y_1$ is the leftmost
  occurrence of $j_1$ not only in $T'_0=T_0$, but also in every tree obtained
  by attaching $W_{j_1},\dots,W_{j_m}$ to $T'_0$. In particular, due to the
  usual equality $\treetops(T')=\unseen(T')$ defining Fishburn trees,
  $W_{j_1}$ must be attached to $y_1$ in $T'$. In other words, the
  subtree $T'_1$ of $T'$ consisting of $T'_0$ and the path $W_{j_1}$
  is $T'_1=T_1$.
  Similarly, we have $\vlabel(u)<j_2$ for each $u\in W_{j_t}$ and
  $t\ge 2$. Thus $y_2$ is the leftmost occurrence of $j_2$ in every
  tree obtained by attaching $W_{j_2},\dots,W_{j_m}$ to $T'_1$,
  and $W_{j_2}$ must be attached to $y_2$ in $T'$. The remaining
  paths can be addressed analogously.
  \end{proof}
  
  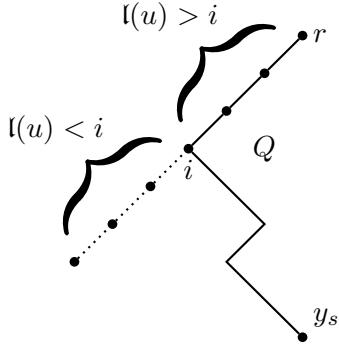
\begin{figure}
  $$
  \begin{tikzpicture}[scale = .5, thick, baseline=0pt]
  \node[inner sep=1pt, draw, circle, fill=black](1) at (0,0){};
  \node[inner sep=1pt, draw, circle, fill=black](2) at (1,1){};
  \node[inner sep=1pt, draw, circle, fill=black](3) at (2,2){};
  \node[inner sep=1pt, draw, circle, fill=black](4) at (3,3){};
  \node[below] at (3,3){$i$};
  \node[inner sep=1pt, draw, circle, fill=black](5) at (4,4){};
  \node[inner sep=1pt, draw, circle, fill=black](6) at (5,5){};
  \node[inner sep=1pt, draw, circle, fill=black](7) at (6,6){};
  \node[right] at (6,6){$r$};
  \draw[dotted] (1) --(2) -- (3) --(4);
  \draw (4) -- (5) -- (6) -- (7);
  \node[scale=2.5,rotate=45] at (0.75,2.25){$\overbrace{}$};
  \node[scale=2.5,rotate=45] at (3.75,5.25){$\overbrace{}$};
  \node at (-0.5,3.5){$\vlabel(u)<i$};
  \node at (2.5,6.5) {$\vlabel(u)>i$};
  \node[inner sep=1pt, draw, circle, fill=black](8) at(6,-2){};
  \node[above right] at (6,-2){$y_s$};
  \draw (4) -- (4,2) -- (5,1) -- (4,0) -- (5,-1) -- (8);
  \node at (5,3){$Q$};
  \end{tikzpicture}
  $$
  \caption{Referring to \textit{Claim 2} of Theorem~\ref{pairs_to_tree},
  the only path $Q$ from $r$ to $y_s$, where $r$ is the root of $T_s$.}\label{figure_claimII}
  \end{figure}

In the following sections, we will use Theorem~\ref{pairs_to_tree}
as a tool to define maps from Fishburn matrices and $(\twoplustwo)$-free
posets to Fishburn trees. Namely, we will identify rows of matrices and
down-sets of posets as the ``elementary blocks'' of a Fishburn cover.
Then, Theorem~\ref{pairs_to_tree} provides a constructive procedure to assemble the resulting blocks in order to obtain a Fishburn tree.
This approach is in fact more general and could be extended to
any other Fishburn structure.

\section{Modified ascent sequences}\label{section_trees_modasc}

Let $x:[n]\to [n]$ be an endofunction. Writing $x_i=x(i)$, as usual, we define
$$
  \asctops(x)=\{(1,x_1)\}\cup\{(i,x_i): 1<i\le n,x_{i-1}< x_i\}
$$
as the set of ascent tops and their indices---including the first
element---and let
$$
  \nub(x) = \{(\min x^{-1}(j), j): 1\leq j\leq \max(x) \}
$$
be the set of first occurrences and their indices.
An \emph{ascent sequence} is an endofunction $x:[n]\to [n]$
such that $x_1=1$ and, for $i\le n-1$,
$$
x_{i+1}\le |\asctops(x_1\cdots x_{i})|+1.
$$
Let $\Ascseq$ be the set of ascent sequences.
Bousquet-M\'elou \etal~\cite{BMCDK} defined an iterative
procedure to map an ascent sequence~$x$ to its modified
version~$\hat{x}$, and the set~$\Modasc$ of \emph{modified ascent
sequences} was originally defined as the image of~$\Ascseq$
under the $x\mapsto\hat{x}$ bijection.
We~\cite{CC} have provided the following characterization of
modified ascent sequences.

\begin{lemma}\label{modasc_char}
  The set $\Modasc$ of modified ascent sequences is characterized by
  $$
    \Modasc=\lbrace x\in\Cay: \asctops(x)=\nub(x)\rbrace.
  $$
\end{lemma}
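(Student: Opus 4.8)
The plan is to work directly with the modification map $M\colon x\mapsto\hat{x}$ of Bousquet-M\'elou \etal, whose image is $\Modasc$ by definition, and to prove the two inclusions $\Modasc\subseteq S$ and $S\subseteq\Modasc$, where $S=\lbrace x\in\Cay:\asctops(x)=\nub(x)\rbrace$. Recall that $M$ builds $\hat{x}$ one letter at a time: set $\hat{x}^{(1)}=x_1=1$, and for $i\ge 2$, if $x_{i-1}<x_i$ (an ascent of $x$) first increment by $1$ every entry of the current word $\hat{x}^{(i-1)}$ that is $\ge x_i$ and then append a new letter equal to $x_i$, while at a non-ascent simply append $x_i$. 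I will write $d_i$ for the number of distinct values in the length-$i$ prefix.

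For $\Modasc\subseteq S$ I would induct on $n$, maintaining the invariant that $\hat{x}^{(i)}$ is a Cayley permutation, with image $[d_i]$, satisfying $\asctops(\hat{x}^{(i)})=\nub(\hat{x}^{(i)})$. The key observation is that the increment step applies the strictly increasing map $\phi$ with $\phi(a)=a$ for $a<x_i$ and $\phi(a)=a+1$ for $a\ge x_i$ to every letter of $\hat{x}^{(i-1)}$; since $\phi$ is order-preserving and injective it preserves both the comparison pattern and the equality pattern of the word, so the position sets $\asctops$ and $\nub$ of the prefix are unchanged (their values merely relabeled by $\phi$), and the image becomes $[d_{i-1}+1]\setminus\lbrace x_i\rbrace$, which appending $x_i$ turns back into an interval. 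Because the letter appended at step $i$ is untouched until a later step, one has $\hat{x}^{(i)}_{i-1}=x_{i-1}$ and $\hat{x}^{(i)}_i=x_i$, so position $i$ is an ascent of $\hat{x}$ exactly when $x_{i-1}<x_i$. At an ascent the increment has erased every occurrence of $x_i$, so the appended $x_i$ is a first occurrence and position $i$ enters both $\asctops$ and $\nub$; at a non-ascent $x_i\le x_{i-1}\le d_{i-1}$ (as $x_{i-1}$ is the last letter of the Cayley word $\hat{x}^{(i-1)}$ on $[d_{i-1}]$), so $x_i$ already occurs and position $i$ enters neither set. The invariant survives, and $i=n$ gives $\hat{x}\in S$.

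For $S\subseteq\Modasc$ I would produce an explicit inverse $D\colon S\to\Ascseq$ and check $M(D(y))=y$, forcing $y\in M(\Ascseq)=\Modasc$. Define $D(y)_i$ to be the number of distinct values $\le y_i$ occurring in $y_1\cdots y_i$; since $y_1=1$ for $y\in S$ we have $D(y)_1=1$. Using $y\in S$ (so that ascent tops of $y$ are exactly its first occurrences) one shows directly from this formula that $D(y)$ has an ascent at position $i$ if and only if $y$ does, whence $\lvert\asctops(D(y)_1\cdots D(y)_i)\rvert=\lvert\asctops(y_1\cdots y_i)\rvert=\lvert\nub(y_1\cdots y_i)\rvert=d_i$. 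Since $D(y)_{i+1}\le d_{i+1}\le d_i+1$, the ascent-sequence bound holds and $D(y)\in\Ascseq$. Finally, one verifies that the counting rule is exactly the inverse of ``increment the entries $\ge x_i$, then append $x_i$'', so that $M(D(y))=y$. Combining the two inclusions gives $\Modasc=S$.

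I expect the main obstacle to be the forward inductive step, and within it the inclusion $\nub\subseteq\asctops$: showing that a non-ascent of $x$ never creates a first occurrence in $\hat{x}$ is precisely where the defining inequality of ascent sequences is needed, here packaged as $x_{i-1}\le d_{i-1}$. The complementary inclusion $\asctops\subseteq\nub$ is essentially immediate from the increment rule, and the preservation of the prefix's $\asctops=\nub$ coincidence under the increments is handled cleanly once one notices that those increments act by the order-preserving relabeling $\phi$.
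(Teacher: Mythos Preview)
The paper does not actually prove this lemma: it is quoted as a result from~\cite{CC} and used as a black box. So there is nothing to compare your argument against in the present paper; you are supplying a proof where the authors defer to their earlier work.

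Your approach is the natural direct one and it is sound. For the inclusion $\Modasc\subseteq S$ the crucial points are exactly the ones you isolate: the increment at an ascent step is an order- and equality-preserving relabeling of the prefix, so the \emph{position} sets $\asctops$ and $\nub$ of the prefix are unchanged; the freshly appended letter at an ascent is a first occurrence because its value has just been vacated; and at a non-ascent the inequality $x_i\le x_{i-1}=\hat{x}^{(i-1)}_{i-1}\le d_{i-1}$ forces $x_i$ to lie in the current image, so it is neither an ascent top nor a first occurrence. Your remark that the last appended letter is untouched until the next step makes the identity $\hat{x}^{(i)}_{i-1}=x_{i-1}$ immediate and is worth stating explicitly in a full write-up. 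For $S\subseteq\Modasc$, your demodification $D(y)_i=\bigl|\{\,y_j:j\le i,\ y_j\le y_i\,\}\bigr|$ is the standard inverse of the Bousquet-M\'elou \etal\ map, and your verification that $D(y)$ and $y$ share the same ascent positions (hence $D(y)\in\Ascseq$) goes through using the hypothesis $\asctops(y)=\nub(y)$ precisely as you indicate. The only step you leave as ``one verifies'' is $M(D(y))=y$; this is routine but does require a short induction parallel to the forward direction, so in a complete proof you would want to spell it out rather than assert it.
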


Alternatively, a recursive definition of $\Modasc$ can be found
in~\cite{CC}, as well as a description of $\Modasc$ by avoidance of
two Cayley-mesh patterns, defined in~\cite{Ce}.

Recall that $\treetoseq(T)$ denotes the in-order sequence of the tree
$T$. We wish to prove that $T\mapsto\treetoseq(T)$ is a bijective
mapping from from Fishburn trees to modified ascent sequences.  We shall
start by showing that the statistics $\unseen(T)$ and $\treetops(T)$ (on
endotrees) are natural analogues of statistics $\nub(x)$ and
$\asctops(x)$ (on endofunctions).

\begin{lemma}\label{nub_unseen}
  Let $T\in\ntrees$ and let $x=\treetoseq(T)$. Then, for each $i\ge 1$,
  $$
    (i,x_i)\in\nub(x)\ \iff\ v_i\in\unseen(T).
  $$
\end{lemma}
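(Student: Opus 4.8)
The plan is to reduce both sides of the asserted equivalence to a single elementary condition on the word $x$, namely that $x_{i'}\neq x_i$ for every $i'<i$, and to observe that this is exactly what each side says once the definitions are unfolded. The only ingredient needed is the defining identity $x_i=\vlabel(v_i)$ coming from the in-order sequence $x=\treetoseq(T)$; in particular the lemma holds for arbitrary endotrees, and no Fishburn-specific hypothesis is required.

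First I would translate the right-hand side. By the definition of $\unseen(T)$, the statement $v_i\in\unseen(T)$ means precisely that $\vlabel(v_{i'})\neq\vlabel(v_i)$ for every $i'<i$. Substituting $\vlabel(v_{i'})=x_{i'}$ and $\vlabel(v_i)=x_i$, this is literally the condition that $x_{i'}\neq x_i$ for all $i'<i$; that is, $x_i$ does not occur among the first $i-1$ letters of $x$.

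Next I would translate the left-hand side. Unfolding $\nub(x)=\{(\min x^{-1}(j),j):1\le j\le\max(x)\}$, a pair $(a,b)$ belongs to $\nub(x)$ exactly when $b\in[\max(x)]$ and $a=\min x^{-1}(b)$. For the specific pair $(i,x_i)$ we have $b=x_i$, and since $x_i$ is a value actually attained by $x$ we automatically get $1\le x_i\le\max(x)$, so the range requirement is satisfied for free. Hence $(i,x_i)\in\nub(x)$ if and only if $i=\min x^{-1}(x_i)$. Because $i\in x^{-1}(x_i)$ always holds, the equality $i=\min x^{-1}(x_i)$ is equivalent to there being no smaller index with the same value, i.e.\ to $x_{i'}\neq x_i$ for all $i'<i$.

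Comparing the two translations, both $(i,x_i)\in\nub(x)$ and $v_i\in\unseen(T)$ are equivalent to the same condition, and the claimed equivalence follows. There is essentially no genuine obstacle here: the statement is a definitional unfolding, and the only point requiring a moment of care is verifying that the range constraint $1\le x_i\le\max(x)$ in the definition of $\nub$ is vacuously met for a pair of the form $(i,x_i)$, so that the membership condition collapses cleanly to $i=\min x^{-1}(x_i)$.
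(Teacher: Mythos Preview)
Your proof is correct and follows essentially the same approach as the paper: both arguments unfold the definitions of $\nub(x)$ and $\unseen(T)$ and reduce each side to the condition that $x_i$ is the leftmost occurrence of its value in $x$, using only the identity $x_i=\vlabel(v_i)$. Your version is slightly more detailed in explicitly checking the range constraint on $x_i$, but the underlying idea is the same.
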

\begin{proof}
  Let $i\ge 1$. Then $(i,x_i)\in\nub(x)$ if and only if $x_i$ is the
  leftmost occurrence of the corresponding integer $\vlabel(v_i)$
  in $x$. Equivalently, $\vlabel(v_j)\neq\vlabel(v_i)$ for each $j<i$;
  that is, $v_i\in\unseen(T)$.
\end{proof}

\begin{lemma}\label{asctops_treetops}
  Let $T\in\ntrees$ and let $x=\treetoseq(T)$. Then, for each $i\ge 1$,
  $$
    (i,x_i)\in\asctops(x)\ \iff\ v_i\in\treetops(T(x)).
  $$
\end{lemma}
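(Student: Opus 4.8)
The plan is to verify the equivalence one index at a time, splitting off the case $i=1$ and handling $i>1$ by matching the definitions of $\asctops$ and $\treetops$ term by term. For $i=1$ both sides hold unconditionally: $(1,x_1)\in\asctops(x)$ by the convention built into the definition of $\asctops$, and $v_1\in\treetops(T)$ since $v_1$ is listed explicitly in the definition of $\treetops$. So the equivalence is immediate in this case.

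For $i>1$ I would unfold both sides to a common condition. By definition, $(i,x_i)\in\asctops(x)$ precisely when $x_{i-1}<x_i$, while $v_i\in\treetops(T)$ precisely when $L(v_i)\neq\emptyset$ (the node $v_i$ being distinct from $v_1$ when $i>1$). Hence the lemma reduces to the single equivalence
$$
  x_{i-1}<x_i \iff L(v_i)\neq\emptyset,
$$
which is exactly the observation recorded in Section~\ref{section_Fish_trees} relating ascents of the in-order sequence to the presence of left children. Invoking it finishes the proof.

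The one substantive point---and the place where the work sits if the equivalence is to be re-proved rather than cited---is the description of the in-order predecessor $v_{i-1}$ of $v_i$. If $L(v_i)\neq\emptyset$, then $v_{i-1}$ is the rightmost node of the left subtree $L(v_i)$; because $T$ is strictly decreasing to the left, every label in $L(v_i)$ lies strictly below $\vlabel(v_i)=x_i$, so $x_{i-1}<x_i$. If instead $L(v_i)=\emptyset$, then $v_{i-1}$ is the nearest ancestor $u$ of $v_i$ having $v_i$ in its right subtree $R(u)$; since $T$ is (weakly) decreasing, $\vlabel(u)\ge\vlabel(v_i)$, giving $x_{i-1}\ge x_i$. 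The two cases together yield the required equivalence. I expect the only delicate step to be the bookkeeping of which monotonicity applies in each case---strict decrease when $v_{i-1}$ sits to the left of $v_i$ inside $L(v_i)$, but merely weak decrease when $v_{i-1}$ is the ancestor $u$---and it is precisely this asymmetry that makes the condition detect ascents rather than weak ascents.
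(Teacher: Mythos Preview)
Your proposal is correct and takes essentially the same approach as the paper: both arguments hinge on locating the in-order predecessor $v_{i-1}$ and then applying the relevant monotonicity of $T$ (strict to the left when $v_{i-1}\in L(v_i)$, weak when $v_{i-1}$ is an ancestor). Your case split on whether $L(v_i)$ is empty is in fact a bit tidier than the paper's forward/backward organization, but the substance is identical; just note that the Section~\ref{section_Fish_trees} observation you cite is asserted there without proof, so your self-contained argument in the final paragraph is what actually carries the lemma.
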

\begin{proof}
  By definition we have $(1,x_1)\in\asctops(x)$ and
  $v_1\in\treetops(T)$, which takes care of the case $i=1$.
  Assume $i\ge 2$ and suppose, initially, that
  $(i,x_i)\in\asctops(x)$; that is, $x_{i-1}<x_i$. Since $x_{i-1}$ and
  $x_i$ are consecutive entries in the in-order sequence, the node $v_i$
  is visited immediately after $v_{i-1}$ in the in-order traversal of
  $T$. In fact, only the following two cases are admitted:
  \begin{enumerate}
  \item $v_{i-1}$ is the last visited node in the subtree of $T$ with
    root $\lchild(v_i)$. In this case, $\lchild(v_i)\neq\emptyset$ and
    thus $v_i\in\treetops(T)$.
  \item $v_i=\rchild(v_{i-1})$. This is however impossible since $T$ is
    decreasing and $x_i>x_{i-1}$ by our assumptions.
  \end{enumerate}
  For the converse, let $v_i\in\treetops(T)$. Then $v_{i-1}$ is
  contained in the subtree of $T$ with root $\lchild(v_i)$. In
  particular, $x_{i-1}<x_i$ since $T$ is strictly decreasing to the
  left.
\end{proof}

\begin{proposition}
  Let $T$ be an endotree and let $x=\treetoseq(T)$. Then
  $$
    T\in\fishtrees_n\quad\text{if and only if}\quad x\in\Modasc_n.
  $$
\end{proposition}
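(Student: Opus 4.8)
The plan is to prove the equivalence by combining the earlier bijection $\treetoseq \colon \ntrees_n \to \End_n$ with the characterization of $\Modasc$ in Lemma~\ref{modasc_char}. The key observation is that both the definition of Fishburn tree and the characterization of modified ascent sequences are phrased as the coincidence of two set-valued statistics, and Lemmas~\ref{nub_unseen} and~\ref{asctops_treetops} precisely identify these statistics on the tree side with their counterparts on the sequence side.

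First I would recall that, by the corollary following the second proposition of Section~\ref{section_prelim}, $T$ is a regular endotree if and only if $x = \treetoseq(T) \in \Cay_n$. So the ``regular endotree'' half of the Fishburn-tree condition matches the ``Cayley permutation'' half of the Lemma~\ref{modasc_char} characterization. It then remains to show, under the standing assumption that $T$ is regular (equivalently $x \in \Cay$), that
$$
  \unseen(T) = \treetops(T) \iff \asctops(x) = \nub(x).
$$
This is where Lemmas~\ref{nub_unseen} and~\ref{asctops_treetops} do the work: the former gives $(i,x_i) \in \nub(x) \iff v_i \in \unseen(T)$, and the latter gives $(i,x_i) \in \asctops(x) \iff v_i \in \treetops(T)$, for each $i$. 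Since the index $i$ together with $x_i = \vlabel(v_i)$ identifies a node $v_i$, these two biconditionals set up a correspondence between the pairs $(i,x_i)$ in $\nub(x)$ (resp.\ $\asctops(x)$) and the nodes in $\unseen(T)$ (resp.\ $\treetops(T)$). Under this correspondence, the set equality $\asctops(x) = \nub(x)$ holds if and only if $\treetops(T) = \unseen(T)$, which is exactly the defining condition of a Fishburn tree.

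Putting the pieces together: $T \in \fishtrees_n$ means $T$ is a regular endotree with $\unseen(T) = \treetops(T)$; this is equivalent, via the above, to $x \in \Cay_n$ with $\asctops(x) = \nub(x)$; and by Lemma~\ref{modasc_char} the latter is precisely the statement $x \in \Modasc_n$. I expect the only point requiring minor care to be the bookkeeping in the middle step---verifying that the two biconditionals, which are stated index-by-index, genuinely transport the set equality $\asctops(x)=\nub(x)$ to $\treetops(T)=\unseen(T)$ rather than merely relating the cardinalities. The cleanest way to handle this is to note that a pair $(i,x_i)$ lies in $\asctops(x)$ exactly when $v_i \in \treetops(T)$ and in $\nub(x)$ exactly when $v_i \in \unseen(T)$, so agreement of the two sets of pairs is literally agreement of the two sets of nodes. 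There is no serious obstacle here; the real content has already been isolated in the preceding lemmas, and this proposition is essentially a matter of assembling them.
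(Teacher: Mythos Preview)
Your proposal is correct and follows essentially the same approach as the paper: both invoke Lemma~\ref{modasc_char} together with Lemmas~\ref{nub_unseen} and~\ref{asctops_treetops} to transport the equality $\asctops(x)=\nub(x)$ to $\unseen(T)=\treetops(T)$. Your version is in fact slightly more explicit than the paper's, since you separately handle the regularity/Cayley-permutation correspondence, which the paper leaves implicit.
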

\begin{proof}
  It follows immediately by Lemma~\ref{modasc_char},
  Lemma~\ref{nub_unseen} and
  Lemma~\ref{asctops_treetops}. Indeed, for each $i\in [n]$,
  \begin{align*}
    (i,x_i)\in\nub(x) &\iff v_i\in\unseen(T)\\
    \shortintertext{and}
    (i,x_i)\in\asctops(x) &\iff v_i\in\treetops(T).
  \end{align*}
  Thus the equality $\asctops(x)=\nub(x)$ is satisfied if and only if
  $\unseen(T)=\treetops(T)$ is satisfied as well.
\end{proof}

\begin{corollary}
  The (restricted) map $\treetoseq:\fishtrees\to\Modasc$ and
  its inverse map $\seqtotree:\Modasc\to\fishtrees$ are size-preserving
  bijections between Fishburn trees and modified ascent sequences.
  In particular, for each $n\ge 1$ we have
  $$
  |\fishtrees_n|=|\Modasc_n|.
  $$
\end{corollary}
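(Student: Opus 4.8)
The plan is to deduce this corollary directly from the material already assembled, with essentially no new argument, since it is a general fact that restricting a bijection to a subset and its image produces a bijection between them. First I would recall the earlier corollary, which established that $\treetoseq:\ntrees\to\End$ is a size-preserving bijection with inverse $\seqtotree$; in particular, for every $n$ the map $\treetoseq:\ntrees_n\to\End_n$ is a bijection. Since Fishburn trees are by definition (regular) endotrees and modified ascent sequences are Cayley permutations (hence endofunctions), the sets $\fishtrees_n$ and $\Modasc_n$ sit inside $\ntrees_n$ and $\End_n$, respectively.

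Next I would invoke the immediately preceding proposition, which states that for an endotree $T$ with $x=\treetoseq(T)$ one has $T\in\fishtrees_n$ if and only if $x\in\Modasc_n$. Read together with the fact that $\seqtotree$ inverts $\treetoseq$, this is exactly the statement that $\treetoseq$ carries the subset $\fishtrees_n\subseteq\ntrees_n$ bijectively onto the subset $\Modasc_n\subseteq\End_n$: an endotree $T$ lies in $\fishtrees_n$ precisely when its image lies in $\Modasc_n$, and conversely, if $x\in\Modasc_n$ then $\seqtotree(x)$ is an endotree whose in-order sequence is $x$, hence a Fishburn tree by the same proposition. Thus the image of $\fishtrees_n$ is all of $\Modasc_n$, and $\seqtotree$ restricts to a map $\Modasc_n\to\fishtrees_n$.

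Therefore the restricted maps $\treetoseq:\fishtrees\to\Modasc$ and $\seqtotree:\Modasc\to\fishtrees$ are well-defined, mutually inverse, and size-preserving, the last property being inherited from the unrestricted bijection. The cardinality identity $|\fishtrees_n|=|\Modasc_n|$ then follows at once from the existence of a size-preserving bijection between the two families.

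I do not expect any genuine obstacle here, as all the substance is carried by the preceding proposition (which in turn rests on Lemma~\ref{modasc_char}, Lemma~\ref{nub_unseen}, and Lemma~\ref{asctops_treetops}). The only point deserving a moment's care is verifying that the image of $\fishtrees_n$ under $\treetoseq$ equals $\Modasc_n$ rather than being merely contained in it; this is exactly what the ``if and only if'' of the proposition guarantees, combined with the surjectivity of the ambient bijection $\treetoseq:\ntrees_n\to\End_n$.
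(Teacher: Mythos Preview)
Your proposal is correct and matches the paper's approach: the corollary is stated without proof in the paper precisely because it follows immediately from the preceding proposition (the ``if and only if'' characterization) together with the earlier-established size-preserving bijection $\treetoseq:\ntrees\to\End$, exactly as you outline.
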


A Fishburn tree and its in-order sequence are illustrated in
Figure~\ref{figure_fishburn_tree}.

\section{Fishburn matrices}\label{section_trees_mat}

A \emph{Fishburn matrix} is a lower triangular matrix with non-negative
integer entries such that every row and column contains at least one
nonzero entry. Let $\fishmat$ denote the set of Fishburn matrices. The
size of $A\in\fishmat$ is the sum of its entries, and $\fishmat_n$
denotes the set of Fishburn matrices of size $n$. When displaying a
Fishburn matrix we will for the sake of readability leave the region
above the diagonal empty and zeros on or below the diagonal will be
denoted by a dot. For example, we have
$\fishmat_1= \left\lbrace\begin{bmatrix}1\end{bmatrix}\right\rbrace$,
$$
      \setlength\arraycolsep{3pt}
      \renewcommand\arraystretch{0.75}
      \fishmat_2=
      \left\lbrace
      \begin{bmatrix}
        2
      \end{bmatrix},
      \begin{bmatrix}
        1     &   \\
        \cdot & 1
      \end{bmatrix}
      \right\rbrace
      \,\text{ and }\,
      \setlength\arraycolsep{3pt}
      \renewcommand\arraystretch{0.75}
      \fishmat_3=
      \left\lbrace
      \begin{bmatrix}
        3
      \end{bmatrix},
      \begin{bmatrix}
        2     &   \\
        \cdot & 1
      \end{bmatrix},
      \begin{bmatrix}
        1     &   \\
        \cdot & 2
      \end{bmatrix},
      \begin{bmatrix}
        1 &   \\
        1 & 1
      \end{bmatrix},
      \begin{bmatrix}
        1     &       &   \\
        \cdot & 1     &   \\
        \cdot & \cdot & 1
      \end{bmatrix}
      \right\rbrace.
$$
Our definition of a Fishburn matrix is a slight departure
from the original definition~\cite{DP} in that our matrices are
lower triangular rather than upper triangular.

We wish to define a mapping $\treetomat:\fishtrees\to\fishmat$ by regarding 
the label $\vlabel(u)$ of each node $u$ as a column index and the index
$\blabel(u)$ of the maximal right path containing $u$ as a row index.

Let $T$ be a Fishburn tree with $n$ nodes. Let $k=\max(T)$ and let
$\pairs(T)=B_1\dots B_k$ be the Fishburn cover of $T$. For each
$i,j\in [k]$, let $m_i(j)$ be the multiplicity of $j$ in $B_i$.
Equivalently, $m_i(j)$ is equal to the number of
nodes with label $j$ contained in $W_i$, where $W_i$ is the $i$-th
maximal right path in the $\rpath$-decomposition of $T$. Then we let
$\treetomat(T)$ be the $k\times k$ matrix whose $(i,j)$-th entry is
equal to $m_i(j)$.

A simple high-level description of $\treetomat$ is the following:
\begin{itemize}
  \item Compute the $\rpath$-decomposition of $T$.
  \item Use the $i$-th path $W_i$ to ``fill'' the $i$-th row
  of $\treetomat(T)$.
\end{itemize}

\begin{proposition}
  Let $T$ be a Fishburn tree of size $n$ and with $\max(T)=k$.
  Then $\treetomat(T)$ is a $k\times k$ Fishburn matrix of size $n$.
\end{proposition}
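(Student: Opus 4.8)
The plan is to verify directly that $\treetomat(T)$ satisfies each clause in the definition of a Fishburn matrix, reading off every property from the Fishburn cover $\pairs(T)=B_1\dots B_k$ associated to $T$ in the previous section. Recall that the $(i,j)$-th entry is $m_i(j)$, the multiplicity of $j$ in $B_i$, equivalently the number of nodes labelled $j$ on the maximal right path $W_i$. All the required facts about the multisets $B_i$ are already in place, so the argument amounts to translating them into statements about the matrix.

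First I would settle the shape. By Lemma~\ref{labels_interval} we have $|\treetops(T)|=k$ and $\vlabel(\treetops(T))=[k]$; since the correspondence between maximal right paths and nodes of $\treetops(T)$ is a bijection, there are exactly $k$ paths $W_1,\dots,W_k$, so $\treetomat(T)$ has $k$ rows, and the labels, hence the column indices, range over $[k]$, giving $k$ columns. The entries $m_i(j)$ are non-negative integers by construction. For lower triangularity, I would invoke that $\pairs(T)$ is a Fishburn cover, so $j\in B_i$ implies $j\le i$; equivalently $m_i(j)=0$ whenever $j>i$, which is precisely the statement that $\treetomat(T)$ is lower triangular.

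Next come the row and column conditions together with the size. Each path $W_i$ is nonempty, so $B_i\neq\emptyset$ and row $i$ contains a positive entry. For the columns, the Fishburn-cover identity $\bigcup_{i\in[k]}B_i=[k]$ guarantees that every $j\in[k]$ occurs in some $B_i$, so column $j$ contains a positive entry. Finally, since the maximal right paths partition $\vertset(T)$ by Corollary~\ref{mrp_partition}, summing all entries gives
$$
  \sum_{i,j}m_i(j)=\sum_{i=1}^{k}|B_i|=\sum_{i=1}^{k}|W_i|=|\vertset(T)|=n,
$$
so $\treetomat(T)$ has size $n$.

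I do not expect any serious obstacle here: every ingredient has been prepared, and the proof is essentially a dictionary between the combinatorial data of the $\rpath$-decomposition and the numerical data of the matrix. The only point requiring a little care is confirming the dimension, namely that there are exactly $k$ paths and that no label exceeds $k$; both follow from Lemma~\ref{labels_interval} together with the regularity of $T$.
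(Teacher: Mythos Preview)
Your proof is correct and follows essentially the same route as the paper: both read off the row, column, lower-triangularity, and size conditions directly from the Fishburn-cover properties of $\pairs(T)=B_1\dots B_k$. The only difference is that you spell out the $k\times k$ dimension via Lemma~\ref{labels_interval}, whereas the paper builds that into the definition of $\treetomat(T)$ just above the proposition.
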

\begin{proof}
  Since $B_i$ is nonempty, the $i$-th row of $\treetomat(T)$ contains at
  least one nonzero entry. Also, $\bigcup_{i\in [k]}B_i=[k]$ and hence each
  column contains at least one nonzero entry. Furthermore,
  $\treetomat(T)$ is lower triangular, since $j\le i$ for each
  $j\in B_i$ and $i\in [k]$. Finally, the number of nodes in $T$ is
  equal to the sum of entries of $\treetomat(T)$.
\end{proof}

For instance, recall the Fishburn cover associated with the
Fishburn tree in Figure~\ref{figure_fishburn_tree}:
$$
  \pairs(T)=\lbrace 1,1 \rbrace\lbrace 1\rbrace\lbrace 1\rbrace
  \lbrace 2,2\rbrace\lbrace 5,5,3\rbrace\lbrace 2\rbrace
  \lbrace 5,5,4,3\rbrace\lbrace 8,8,7,3\rbrace\lbrace 9,6,1\rbrace.
$$
Its corresponding Fishburn matrix $A=\treetomat(T)$ is
$$
  {
      \setlength\arraycolsep{4pt}
      \renewcommand\arraystretch{0.9}
      A=
      \begin{bmatrix}
        2     &       &       &       &       &       &       &       &   \\
        1     & \cdot &       &       &       &       &       &       &   \\
        1     & \cdot & \cdot &       &       &       &       &       &   \\
        \cdot & 2     & \cdot & \cdot &       &       &       &       &   \\
        \cdot & \cdot & 1     & \cdot & 2     &       &       &       &   \\
        \cdot & 1     & \cdot & \cdot & \cdot & \cdot &       &       &   \\
        \cdot & \cdot & 1     & 1     & 2     & \cdot & \cdot &       &   \\
        \cdot & \cdot & 1     & \cdot & \cdot & \cdot & 1     & 2     &   \\
        1     & \cdot & \cdot & \cdot & \cdot & 1     & \cdot & \cdot & 1 \\
      \end{bmatrix}
    }
$$
where, for instance, the penultimate row of $A$ corresponds to the
penultimate block of $\pairs(T)$:
$$
  \begin{bmatrix}
    0 & 0 & 1 & 0 & 0 & 0 & 1 & 2 & 0
  \end{bmatrix}
  \quad\longleftrightarrow\quad \lbrace 8,8,7,3\rbrace.
$$
Defining the inverse map $\mattotree$ is now
straightforward. Let $A=(a_{i,j})$ be a $k\times k$ Fishburn matrix. Let
$\pairs(A)=B_1\dots B_k$, where $B_i$ is a multiset containing
$a_{i,j}$ copies of the integer $j$, for $i,j\in [k]$. Then $\pairs(A)$
is a Fishburn cover. Indeed $\bigcup_{i\in [k]}B_i=[k]$, since $A$
does not contain null columns; each multiset $B_i$ is nonempty, since
$A$ does not contain null rows; and $j\le i$ for each $j\in B_i$, since
$A$ is lower-triangular. Now, due to Theorem~\ref{pairs_to_tree}, there
is a unique Fishburn tree $T$ such that $\pairs(T)=\pairs(A)$, and we
let $\mattotree(A)=T$. Finally, it is clear that $\treetomat(T)=A$.

We have thus proved the following result.

\begin{corollary}
  The map $\treetomat:\fishtrees\to\fishmat$ and its inverse map
  $\mattotree:\fishmat\to\fishtrees$ are
  size-preserving bijections between Fishburn trees and Fishburn
  matrices. In particular, for each $n\ge 1$ we have
  $$
    |\fishtrees_n|=|\fishmat_n|.
  $$
\end{corollary}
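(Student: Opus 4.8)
The plan is to establish that $\treetomat$ and $\mattotree$ are mutually inverse, size-preserving bijections by factoring both of them through the intermediate layer of Fishburn covers, where the real work has already been done by Theorem~\ref{pairs_to_tree}. The key observation is that a $k\times k$ Fishburn matrix is nothing more than a Fishburn cover written as an array of multiplicities. Indeed, given a Fishburn matrix $A=(a_{i,j})$, the assignment $A\mapsto\pairs(A)=B_1\dots B_k$, where $B_i$ contains $a_{i,j}$ copies of $j$, translates the three defining conditions of a Fishburn matrix into the two defining conditions of a Fishburn cover: the absence of null rows corresponds to each $B_i$ being nonempty, the absence of null columns corresponds to $\bigcup_{i\in[k]}B_i=[k]$, and lower-triangularity corresponds to the constraint that $j\le i$ whenever $j\in B_i$. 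Conversely, a Fishburn cover recovers its matrix by simply reading off multiplicities. Hence $A\mapsto\pairs(A)$ is a bijection between $k\times k$ Fishburn matrices and Fishburn covers with $k$ blocks.

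First I would record how the two maps factor. By construction $\treetomat$ sends a tree $T$ to its cover $\pairs(T)$ and then encodes that cover as the matrix of multiplicities $m_i(j)$; dually, $\mattotree$ sends a matrix $A$ to the cover $\pairs(A)$ and then to the unique tree with that cover furnished by Theorem~\ref{pairs_to_tree}. Since Theorem~\ref{pairs_to_tree} asserts precisely that $T\mapsto\pairs(T)$ is a bijection from $\fishtrees$ onto the set of Fishburn covers, and since the matrix encoding is a bijection by the previous paragraph, both $\treetomat$ and $\mattotree$ are composites of bijections and are therefore bijections; they are mutually inverse because their two factorizations reverse one another step for step.

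It then remains to verify that the bijection is size-preserving. The size of $T\in\fishtrees$ is $|\vertset(T)|$, and by Corollary~\ref{mrp_partition} the maximal right paths $W_1,\dots,W_k$ partition $\vertset(T)$, so $|\vertset(T)|=\sum_{i=1}^{k}|B_i|$. On the other hand, the size of $\treetomat(T)$ is by definition the sum of its entries, namely $\sum_{i,j}m_i(j)=\sum_{i=1}^{k}|B_i|$. These two quantities agree, so the bijections preserve size, and the identity $|\fishtrees_n|=|\fishmat_n|$ follows at once.

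I do not anticipate a genuine obstacle: the statement is in essence a repackaging of Theorem~\ref{pairs_to_tree} together with the elementary dictionary between matrices and covers. The only point deserving care is making the well-definedness of both directions explicit---that $\pairs(A)$ is a Fishburn cover for every Fishburn matrix $A$, and that $\treetomat(T)$ is a Fishburn matrix for every Fishburn tree $T$---but both checks are immediate from the matching of defining conditions recorded above and have, in fact, already been carried out in the preceding proposition and discussion.
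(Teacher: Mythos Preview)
Your proposal is correct and follows essentially the same approach as the paper: the paper does not give the corollary a separate proof but simply writes ``We have thus proved the following result'' after the preceding proposition (that $\treetomat(T)\in\fishmat_n$) and the paragraph defining $\mattotree$ via $\pairs(A)$ and Theorem~\ref{pairs_to_tree}. Your write-up just makes that implicit argument explicit, spelling out the factorization through Fishburn covers and the size count via Corollary~\ref{mrp_partition}.
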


Note that $\mattotree(A)$ can be constructed using
Theorem~\ref{pairs_to_tree}: each path $W_i$ is obtained by reading the
entries in the $i$-th row of $A$, and the paths $W_i$ are then assembled
as illustrated in Theorem~\ref{pairs_to_tree}.

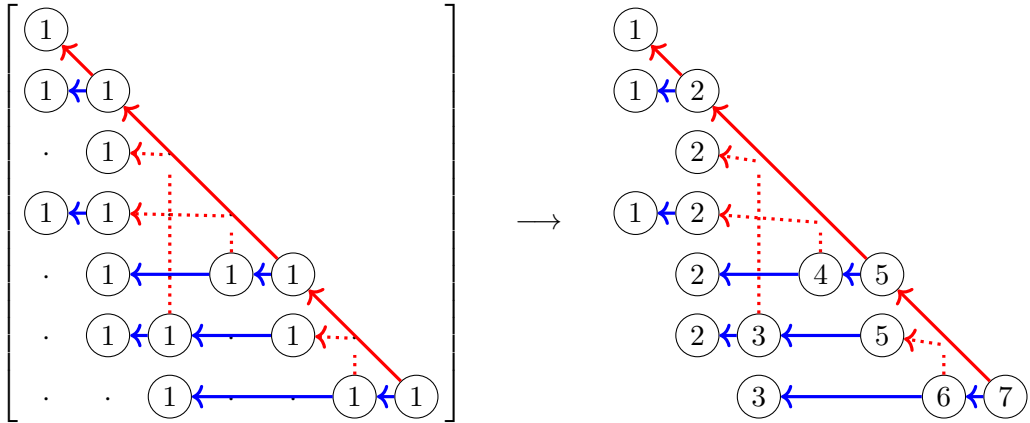
\begin{figure}
  \begin{center}
    \begin{tikzpicture}[baseline=0pt]
      \matrix[
      matrix of math nodes,
      anchor = north west,
      row sep=0.6em,
      column sep=0.6em,
      left delimiter={[},
          right delimiter={]},
          inner xsep=1pt
        ]{
          \node[draw, circle](11){1}; & & & & & & \\
          \node[draw, circle](21){1}; & \node[draw, circle](22){1}; & & & & & \\
          \cdot & \node[draw, circle](32){1};& \node[](33){\cdot}; & & & & \\
          \node[draw, circle](41){1}; & \node[draw, circle](42){1}; & \cdot & \node[](44){\cdot}; & & & \\
          \cdot & \node[draw, circle](52){1}; &\cdot & \node[draw, circle](54){1}; & \node[draw, circle](55){1}; & & \\
          \cdot & \node[draw, circle](62){1}; & \node[draw, circle](63){1}; & \cdot & \node[draw, circle](65){1}; & \node[](66){\cdot}; & \\
          \cdot & \cdot & \node[draw, circle](73){1}; & \cdot & \cdot & \node[draw, circle](76){1}; & \node[draw, circle](77){1}; \\
        };
      \draw[red, ->, very thick] (77) -- (55);
      \draw[red, ->, very thick] (55) --(22);
      \draw[red, ->, very thick] (22) --(11);
      \draw[blue, ->, very thick] (77) -- (76);
      \draw[blue, ->, very thick] (76) --(73);
      \draw[blue, ->, very thick] (65) -- (63);
      \draw[blue, ->, very thick] (63) -- (62);
      \draw[blue, ->, very thick] (55) -- (54);
      \draw[blue, ->, very thick] (54) -- (52);
      \draw[blue, ->, very thick] (42) -- (41);
      \draw[blue, ->, very thick] (22) -- (21);
      \draw[red, very thick, dotted] (76) -- (66);
      \draw[->, red, very thick, dotted] (66) -- (65);
      \draw[red, very thick, dotted] (54) -- (44);
      \draw[->, red, very thick, dotted] (44) -- (42);
      \draw[red, very thick, dotted] (63) -- (33)+(1.5,1.5);
      \draw[->, red, very thick, dotted] (33) -- (32);
      \node at (7,-3){$\longrightarrow\quad$};
    \end{tikzpicture}
    \begin{tikzpicture}[baseline=0pt]
      \matrix[
        matrix of math nodes,
        anchor = north west,
        row sep=0.6em,
        column sep=0.6em,
        inner xsep=1pt
      ]{
        \node[draw, circle](11){1}; &                             &                             &                             &                             &                             &                             \\
        \node[draw, circle](21){1}; & \node[draw, circle](22){2}; &                             &                             &                             &                             &                             \\
                                    & \node[draw, circle](32){2}; & \node[](33){};              &                             &                             &                             &                             \\
        \node[draw, circle](41){1}; & \node[draw, circle](42){2}; &                             & \node[](44){};              &                             &                             &                             \\
                                    & \node[draw, circle](52){2}; &                             & \node[draw, circle](54){4}; & \node[draw, circle](55){5}; &                             &                             \\
                                    & \node[draw, circle](62){2}; & \node[draw, circle](63){3}; &                             & \node[draw, circle](65){5}; & \node[](66){};              &                             \\
                                    &                             & \node[draw, circle](73){3}; &                             &                             & \node[draw, circle](76){6}; & \node[draw, circle](77){7}; \\
      };
      \draw[red, ->, very thick] (77) -- (55);
      \draw[red, ->, very thick] (55) --(22);
      \draw[red, ->, very thick] (22) --(11);
      \draw[blue, ->, very thick] (77) -- (76);
      \draw[blue, ->, very thick] (76) --(73);
      \draw[blue, ->, very thick] (65) -- (63);
      \draw[blue, ->, very thick] (63) -- (62);
      \draw[blue, ->, very thick] (55) -- (54);
      \draw[blue, ->, very thick] (54) -- (52);
      \draw[blue, ->, very thick] (42) -- (41);
      \draw[blue, ->, very thick] (22) -- (21);
      \draw[red, very thick, dotted] (76) -- (66);
      \draw[->, red, very thick, dotted] (66) -- (65);
      \draw[red, very thick, dotted] (54) -- (44);
      \draw[->, red, very thick, dotted] (44) -- (42);
      \draw[red, very thick, dotted] (63) -- (33)+(1.5,1.5);
      \draw[->, red, very thick, dotted] (33) -- (32);
    \end{tikzpicture}
  \end{center}
  \caption{The Fishburn tree illustrated in
    Figure~\ref{figure_comb_tree} drawn on the corresponding binary
    Fishburn matrix. The rightmost entry in the bottom row is the root of the
    tree. Red arrows point to left children and blue arrows point to
    right children. Dotted arrows indicate the ``bouncing'' construction
    that determines the father of non-diagonal rightmost
    entries.}\label{figure_bouncing_entries}
\end{figure}

\begin{remark}
  The Fishburn tree $\mattotree(A)$ can be drawn directly on the
  Fishburn matrix $A$. This construction is most significant on
  binary matrices, where each nonzero entry of $A$ is identified with exactly
  one node of $\treetomat(A)$. Instead of giving the full details, we refer
  the reader to the example in
  Figure~\ref{figure_bouncing_entries}. One interesting aspect is that
  if the rightmost entry of a row is not on the diagonal of $A$, then
  its father can be determined by ``bouncing'' off of the diagonal; indeed
  if $a_{i,i}=0$, then $W_i$ is non-diagonal and the topmost node of
  $W_i$ (i.e.\ the rightmost entry of the $i$-th row of $A$) is the left
  child of a node with label $i$ (i.e.\ in column $i$). In general, more
  than one entry could be hit by bouncing off of the diagonal. To determine
  the correct one is rather tricky, and involves defining a notion of
  in-order traversal of matrices which we have decided to omit.
\end{remark}

\section{$(\twoplustwo)$-free posets}\label{section_trees_pos}

In this paper we consider two posets to be equal up to isomorphism, that is,
if there is an order preserving bijections between them. The isomorphism
class is called an \emph{unlabeled poset}.
An unlabeled poset is \emph{$(\twoplustwo)$-free} if it does not contain an
induced subposet order isomorphic to $\twoplustwo$, the union of two disjoint
$2$-element chains. The \emph{size} of a poset is the number of its
elements and we let $\fishpos_n$ denote the set of unlabeled $(\twoplustwo)$-free
posets of size $n$. Given $Q\in\fishpos=\cup_{n \geq 0}\fishpos_n$ and
$u\in Q$, let
$$
  D(u)=\left\lbrace v: v<u\right\rbrace
$$
be the strict down-set of $u$. Fishburn~\cite{F} showed that a poset is
$(\twoplustwo)$-free if and only if it is order isomorphic to an interval
order. Alternatively (see~\cite{BMCDK} for a proof), a poset is
$(\twoplustwo)$-free if and only if its strict down-sets can be linearly
ordered by inclusion. That is, the strict down-sets of $Q$ form a chain
$$
  \emptyset=D_1\subset D_2\subset \dots\subset D_k.
$$
For convenience, we let $D_{k+1}=Q$.
If $D(u)=D_i$, we say that the element $u$ is at
\emph{level} $i$ and we write $\level(u)=i$. Finally, we let
$$
  L_i=\left\lbrace u: \level(u)=i\right\rbrace
$$
denote the $i$-th level of $Q$. It is clear that a $(\twoplustwo)$-free
poset is completely determined by its levels and strict down-sets.
Indeed, any poset $Q$ is determined by the list of its strict down-sets
$\lbrace D(u): u\in Q\rbrace$, and $D(u)=D_i$ if $\level(u)=i$.
An element $u$ of $Q$ is \emph{maximal} if no other element of $Q$
is greater than $u$. It is \emph{minimal} if no other element is
smaller than $u$. We let $\max(Q)$ and $\min(Q)$
denote the set of maximal and minimal elements, respectively.
It is easy to see that if $Q$ is $(\twoplustwo)$-free, then
$\min(Q)=L_1$ and $\max(Q)=D_{k+1}\setminus D_k$.

We wish to define a bijection $\treetopos: \fishtrees\to\fishpos$.
Let $T$ be a Fishburn tree and let $k=\max(T)$.
Recall from Definition~\ref{def_of_b} that, given $u\in\vertset(T)$, the
index of the maximal right path that contains $u$ in the
$\rpath$-decomposition of $T$ is denoted by $\blabel(u)$.
Recall also that $\vlabel(u)\le\blabel(u)$ for each $u\in\vertset(T)$,
a fact that will be used repeatedly in this section.
We wish to define a $(\twoplustwo)$-free poset $Q=\treetopos(T)$ by
associating each node of $T$ with an element of $Q$. That is,
we let $V(T)$ be the set of elements of $Q$. Then we define a partial
order on $Q$ by letting, for any $u$ and $v$ in $Q$,
$$
u<v\iff\blabel(u)<\vlabel(v).
$$
Let us prove that this relation is a strict partial order.
Irreflexivity is an immediate consequence of
the inequality $\vlabel(u)\le\blabel(u)$. To prove antisymmetry, suppose
that $u<v$; i.e.\ $\blabel(u)<\vlabel(v)$. For a contradiction,
suppose also that $v<u$; i.e.\ $\blabel(v)<\vlabel(u)$. Then
$$
\blabel(u)<\vlabel(v)\le\blabel(v)<\vlabel(u),
$$
from which we get $\blabel(u)<\vlabel(u)$, which is impossible.
Finally, to prove transitivity, suppose that $u<v$ and $v<w$. Then
$$
\blabel(u)<\vlabel(v)\le\blabel(v)<\vlabel(w),
$$
from which $\blabel(u)<\vlabel(w)$, and thus $u<w$, follows.
To prove that $Q$ is $(\twoplustwo)$-free, we show that its strict down-sets
are linearly ordered by inclusion. Let us first determine its strict
down-sets. Let $u\in Q$ and suppose that $\vlabel(u)=i$. The strict
down-set of $u$ is
$$
D(u)=\lbrace v\in Q: \blabel(v)<i\rbrace.
$$
In other words, all the elements with vertex label $i$ have the same
strict down-set, namely $\lbrace v\in Q: \blabel(v)<i\rbrace$. For $i\in [k]$,
let $D_i=\lbrace v\in Q: \blabel(v)<i\rbrace$. Note that there is at least
one element in $Q$ whose down-set is $D_i$ since $\vlabel(Q)=[k]$ by the
definition of Fishburn tree.
Now, it is clear by definition that $D_i\subseteq D_{i+1}$.
Furthermore, the inclusion is strict since $\blabel(Q)=[k]$;
thus, there is at least one element in
$D_{i+1}\setminus D_i=\lbrace u\in Q:\blabel(u)=i\rbrace$.
Therefore, the down-sets of $Q$ are precisely the sets $D_i$, $i\in [k]$,
which are strictly ordered by inclusion. We have now proved that $Q$
is $(\twoplustwo)$-free. Note that the levels of $Q$ are
$$
L_i=
\lbrace u\in Q: D(u)=D_i\rbrace=
\lbrace u\in Q: \vlabel(u)=i\rbrace.
$$
In fact, an alternative way to define $Q$ is to let its levels
and strict down-sets be
$$
  L_i=\lbrace u\in\vertset(T): \vlabel(u)=i\rbrace
  \quad\text{and}\quad
  D_i=\lbrace u\in\vertset(T): \blabel(u) < i\rbrace.
$$

The inverse map $\postotree$ of $\treetopos$ is defined as follows. Given a poset
$Q\in\fishpos$, we define a canonical labeling of $Q$ by setting, for
each $u\in Q$,
$$
  \vlabel(u)=\level(u)
  \quad\text{and}\quad
  \blabel(u)=\min\lbrace i:u\in D_i\rbrace-1.
$$
Note that
$\lbrace\vlabel(u):u\in Q\rbrace=\lbrace\blabel(u):u\in Q\rbrace=[k]$,
where $k$ is the number of levels of $Q$. Moreover, we have
$\vlabel(u)\le\blabel(u)$ for each $u\in Q$. In fact, to the poset $Q$
we have associated the Fishburn cover $\pairs(Q)=B_1\dots B_k$,
where $B_i$ contains a copy of the integer $j$ for each $u\in Q$ with
labels $\blabel(u)=i$ and $\vlabel(u)=j$. We can thus use
Theorem~\ref{pairs_to_tree} to construct a Fishburn tree
$T=\postotree(Q)$ in which each node $u$ has labels $\vlabel(u)$ and
$\blabel(u)$. Finally, it is easy to see that $\treetopos(T)=Q$ and it
follows that $\postotree$ is the inverse map of $\treetopos$. Indeed,
$\vlabel(u)=\level(u)$ and $\blabel(u)=i+1-1=i$, where
$u\in D_{i+1}\setminus D_i$. In the end we obtain the following result.

\begin{corollary}
  The map $\treetopos:\fishtrees\to\fishpos$ and its inverse map
  $\postotree:\fishpos\to\fishtrees$ are size-preserving bijections
  between Fishburn trees and $(\twoplustwo)$-free posets.
  In particular, for each $n\ge 1$ we have
  $$
  |\fishtrees_n|=|\fishpos_n|.
  $$
\end{corollary}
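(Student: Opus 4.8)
The plan is to verify that $\treetopos$ and $\postotree$ are well-defined and mutually inverse; the cardinality identity $|\fishtrees_n|=|\fishpos_n|$ then follows at once from the existence of a size-preserving bijection. First I would observe that the well-definedness of $\treetopos$ has already been secured in the discussion preceding the statement: for any $T\in\fishtrees$, the relation $u<v\iff\blabel(u)<\vlabel(v)$ is a strict partial order whose strict down-sets form a chain $D_1\subset\dots\subset D_k$, so $\treetopos(T)$ is genuinely $(\twoplustwo)$-free. Since the elements of $\treetopos(T)$ are exactly the nodes in $\vertset(T)$, the map preserves size.

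Next I would check that $\postotree$ is well-defined. Given $Q\in\fishpos$ with $k$ levels, the canonical labeling $\vlabel(u)=\level(u)$ and $\blabel(u)=\min\{i:u\in D_i\}-1$ produces a list of multisets $\pairs(Q)=B_1\dots B_k$. I would confirm that this is a bona fide Fishburn cover: each $B_i$ is nonempty because $D_{i+1}\setminus D_i$ is nonempty; the union of the $B_i$ equals $[k]$ because every level is nonempty; and $j\le i$ whenever $j\in B_i$ because $\vlabel(u)\le\blabel(u)$ for each $u$. Theorem~\ref{pairs_to_tree} then returns a unique Fishburn tree $\postotree(Q)$, which is again manifestly size-preserving.

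The crux is to show the two maps are mutually inverse, and here I would argue by tracking the pair of statistics $(\vlabel,\blabel)$. For $\treetopos\circ\postotree=\mathrm{id}$, I would recompute the levels and strict down-sets of $\treetopos(\postotree(Q))$ directly from the canonical labeling and check—paying attention to the shift by $-1$ in the definition of $\blabel$—that $\vlabel(u)=\level(u)$ and that $\blabel(u)=i$ precisely when $u\in D_{i+1}\setminus D_i$, so the original poset is recovered. For $\postotree\circ\treetopos=\mathrm{id}$, I would use that $\treetopos(T)$ has levels $\{u:\vlabel(u)=i\}$ and strict down-sets $\{u:\blabel(u)<i\}$, so feeding it back through the canonical labeling reproduces exactly the cover $\pairs(T)$; the uniqueness clause of Theorem~\ref{pairs_to_tree} then forces $\postotree(\treetopos(T))=T$.

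The main obstacle I anticipate is not conceptual but bookkeeping: confirming that the canonical labeling of $Q$ and the labeling induced on $\treetopos(T)$ are exact inverses of one another, in particular that the $-1$ in $\blabel(u)=\min\{i:u\in D_i\}-1$ lines up correctly with the convention $D_{k+1}=Q$. Once this round-trip on the statistics $(\vlabel,\blabel)$ is pinned down, every remaining requirement—well-definedness, size preservation, and ultimately $|\fishtrees_n|=|\fishpos_n|$—follows directly.
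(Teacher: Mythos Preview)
Your proposal is correct and follows essentially the same approach as the paper: both arguments track the pair of statistics $(\vlabel,\blabel)$ through the canonical labeling and invoke the uniqueness clause of Theorem~\ref{pairs_to_tree}. If anything, you are slightly more thorough than the paper, which explicitly verifies only the composition $\treetopos\circ\postotree=\mathrm{id}$ (via the computation $\blabel(u)=i+1-1=i$ for $u\in D_{i+1}\setminus D_i$) and declares that the other direction ``follows''; your explicit treatment of $\postotree\circ\treetopos=\mathrm{id}$ via $\pairs(\treetopos(T))=\pairs(T)$ fills in that step.
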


A Fishburn tree and the canonical labeling of
the corresponding poset are illustrated in Figure~\ref{figure_poset}.

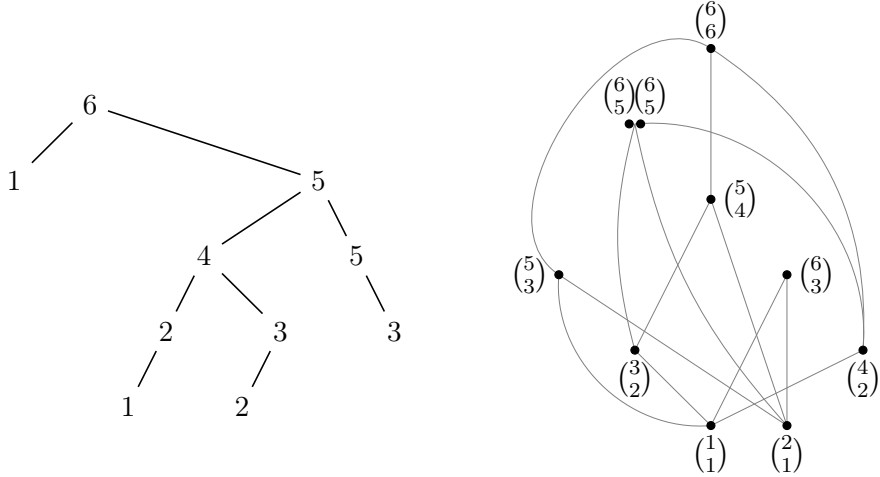
\begin{figure}
$$
\begin{tikzpicture}[scale = .5, semithick, baseline=0pt, level distance=2cm]
\tikzstyle{level 1}=[sibling distance=4cm]
\tikzstyle{level 2}=[sibling distance=4cm]
\tikzstyle{level 3}=[sibling distance=2cm]
\tikzstyle{level 4}=[sibling distance=2cm]
\node at(0,10){6}
	child {node{1}}
	child {node at(4,0){5}
		child{node at(-1,0){4}
				child{node{2}
					child{node at(-1,0){1}}
				}			
				child{nodeat(1,0){3}
					child{nodeat(-1,0){2}}				
				}	
	}
		child{nodeat(-1,0){5}
			child{nodeat(1,0){3}}		
		}
	}
;
\end{tikzpicture}
\qquad
\begin{tikzpicture}[scale=0.5]
\draw[ultra thin, gray] (2,2)edge[bend right=00](4,0);
\draw[ultra thin, gray] (8,2)edge[bend right=00](4,0);
\draw[ultra thin, gray] (0,4)edge[bend right=50](4,0);
\draw[ultra thin, gray] (0,4)edge[bend left=00](6,0);
\draw[ultra thin, gray] (6,4)edge[bend right=00](4,0);
\draw[ultra thin, gray] (6,4)edge[bend right=00](6,0);
\draw[ultra thin, gray] (4,6)edge[bend right=00](2,2);
\draw[ultra thin, gray] (4,6)edge[bend right=00](6,0);
\draw[ultra thin, gray] (2,8)edge[bend right=15](2,2);
\draw[ultra thin, gray] (2,8)edge[bend right=15](6,0);
\draw[ultra thin, gray] (2,8)edge[bend left=50](8,2);
\draw[ultra thin, gray] (4,10)edge[bend right=90](0,4);
\draw[ultra thin, gray] (4,10)edge[bend right=00](4,6);
\draw[ultra thin, gray] (4,10)edge[bend left=30](8,2);
\filldraw [black] (4,0) circle (3pt) node[below]{$\binom{1}{1}$};
\filldraw [black] (6,0) circle (3pt) node[below]{$\binom{2}{1}$};
\filldraw [black] (2,2) circle (3pt) node[below]{$\binom{3}{2}$};
\filldraw [black] (8,2) circle (3pt) node[below]{$\binom{4}{2}$};
\filldraw [black] (0,4) circle (3pt) node[left]{$\binom{5}{3}$};
\filldraw [black] (6,4) circle (3pt) node[right]{$\binom{6}{3}$};
\filldraw [black] (4,6) circle (3pt) node[right]{$\binom{5}{4}$};
\filldraw [black] (1.85,8) circle (3pt) node[above,xshift=-0.125cm]{$\binom{6}{5}$};
\filldraw [black] (2.15,8) circle (3pt) node[above,xshift=0.125cm]{$\binom{6}{5}$};
\filldraw [black] (4,10) circle (3pt) node[above]{$\binom{6}{6}$};
\end{tikzpicture}
$$
\caption{A Fishburn tree $T$ and the poset $\treetopos(T)$.
The poset is equipped with its canonical labeling; that is, each node
$u$ is equipped with the pair of labels $\binom{\blabel(u)}{\vlabel(u)}$.}\label{figure_poset}
\end{figure}

\section{Flip and sum operations on $\Modasc$}\label{section_flipsum}

Duality acts as an
involution on $(\twoplustwo)$-free posets. Dukes and Parviainen~\cite{DP}
showed that this operation is equivalent to computing the reflection of a
Fishburn matrix in its antidiagonal. On the other hand, it is difficult
to infer how duality acts on the corresponding ascent
sequences. Similarly, the sum of two Fishburn matrices
is a Fishburn matrix, but to describe the corresponding sum
operation on ascent sequences is a challenging problem. In this section
we use Fishburn covers to provide a more direct construction for
both problems in terms of modified ascent sequences.

Let $A=(a_{i,j})$ be a $k\times k$ matrix. We denote by $\flip(A)$ the
reflection of $A$ in its antidiagonal; that is, the $(i,j)$-th entry of
$\flip(A)$ is equal to
$$
  \flip(A)(i,j)=a_{k+1-j,k+1-i}.
$$

Let $A=(a_{i,j})$ and $A'=(a'_{i,j})$ be two matrices of dimension
$p\times p$ and $q\times q$, respectively, with $p\le q$. We denote by
$A+A'$ the $q\times q$ matrix obtained by summing $A$ and $A'$
entry by entry; that is, the $(i,j)$-th entry of $A+A'$ is equal
to
$$
  (A+A')(i,j)=
  \begin{cases}
    a_{i,j}+a'_{i,j},\quad & \text{if }i\le p\text{ and } j\le p; \\
    a'_{i,j},\quad         & \text{if }i>p\text{ or }j>p.
  \end{cases}
$$

It is easy to see that if $A$ is a Fishburn matrix, then
$\flip(A)$ is a Fishburn matrix as well. Similarly, the sum $A+A'$
of two Fishburn matrices $A$ and $A'$ is a Fishburn matrix. The $\flip$
and $\sumop$ problems are formulated in terms of modified ascent
sequences as follows:

\begin{itemize}
\item Let $x$ be a modified ascent sequence and let
  $A=(\treetomat\circ\seqtotree)(x)$ be the corresponding Fishburn
  matrix. What is the modified ascent sequence $\flip(x)$ that
  corresponds to $\flip(A)$?
\item Let $x$ and $x'$ be modified ascent sequences and let
  $A=(\treetomat\circ\seqtotree)(x)$ and
  $A'=(\treetomat\circ\seqtotree)(x')$ be the corresponding Fishburn
  matrices. What is the modified ascent sequence $x+x'$ that
  corresponds to $A+A'$?
\end{itemize}

An answer to the previous two questions could be obtained by composing
the bijection $\treetoseq$, defined in Section~\ref{section_prelim},
with the bijection $\treetomat$, defined in Section~\ref{section_trees_mat}.
For instance, we could first determine the Fishburn matrix
$A=\treetomat\bigl(\seqtotree(x)\bigr)$ associated with the modified ascent
sequence~$x$, then compute $\flip(A)$, and finally obtain $\flip(x)$ as
$\treetoseq\bigl(\mattotree(\flip(A))\bigr)$.
However, we have defined $\treetoseq$ in terms of Fishburn trees, while
$\treetomat$ was defined in terms of Fishburn covers.
To make the whole construction more straightforward, we wish to reinterpret
$\treetoseq$ and its inverse $\seqtotree$ in terms of Fishburn covers
(see also Figure~\ref{figure_diagram_flip_sum}).

\begin{figure}
  $$
    \begin{tikzcd}
      x \arrow{r}{\seqtotree} & \pairs(x) \arrow{d}{\flip} \\
      \flip(x) & \flip\bigl(\pairs(x)\bigr) \arrow{l}{\treetoseq}
    \end{tikzcd}
    \qquad
    \begin{tikzcd}[row sep = tiny]
      x \arrow{r}{\seqtotree} & \pairs(x) \arrow{dr} & & \\
      & & \pairs(x)\oplus\pairs(x') \arrow{r}{\treetoseq} & x+x' \\
      x'\arrow{r}{\seqtotree} & \pairs(x')\arrow{ur} & &
    \end{tikzcd}
  $$
  \caption{Diagrams to compute the flip and sum operations on modified ascent sequences.}\label{figure_diagram_flip_sum}
\end{figure}

Let $x\in\Modasc$ and $A\in\fishmat$. With slight abuse of notation,
we denote by $\pairs(x)$ the Fishburn cover of $x$; that is, we let
$\pairs(x)=\pairs\bigl(\seqtotree(x)\bigr)$. Similarly, we let
$\pairs(A)=\pairs\bigl(\mattotree(A)\bigr)$ be the Fishburn cover of $A$.
Our first goal is to describe the composition
$$
  x\quad\loongmapsto{$\seqtotree$}\quad\pairs(x)\quad\loongmapsto{$\treetomat$}\quad A
$$
and its inverse
$$
  A\quad\loongmapsto{$\mattotree$}\quad\pairs(A)\quad\loongmapsto{$\treetoseq$}\quad x,
$$
thus bypassing the construction of the intermediate Fishburn tree. We
spell out the main ideas below, leaving some details to the reader.

We start by redefining~$\treetoseq:\fishtrees\to\Modasc$ in terms of
Fishburn covers. Let $P=B_1\dots B_k$ be a Fishburn cover. For each $i$,
let $\overrightarrow{B_i}$ be the sequence obtained by arranging $B_i$ in weakly
decreasing order. Following Theorem~\ref{pairs_to_tree}, let
$$
  D=\lbrace i\in [k]: i\in B_i\rbrace\quad\text{and}\quad\bar{D}=\lbrace i\in [k]: i\notin B_i\rbrace.
$$
Write
$$
  D=\lbrace i_1,i_2,\dots, i_p\rbrace\quad\text{and}\quad\bar{D}=\lbrace j_1,j_2,\dots,j_q\rbrace,
$$
with $p+q=k$, $i_1<i_2<\dots<i_p$ and $j_1>j_2>\dots>j_m$. The modified
ascent sequence~$x$ associated with~$P$ is defined as follows:
\begin{enumerate}
\item Define $x^{(0)}=\overrightarrow{B_{i_{1}}}\overrightarrow{B_{i_{2}}}\dots
  \overrightarrow{B_{i_{p}}}$ as the sequence obtained by juxtaposing
  the ``diagonal blocks''.
\item For $s=1,2,\dots,q$, let $x^{(s)}$ be obtained from $x^{(s-1)}$ by
  inserting $\overrightarrow{B_s}$ immediately before the leftmost occurrence
  of the integer $j_s$.
\end{enumerate}
Finally, the desired modified sequence is $x=x^{(q)}$. Referring once
again to Theorem~\ref{pairs_to_tree}, the initial sequence $x^{(0)}$ is
the in-order sequence of the ``comb-shaped'' tree $T_0$. The second item
produces a succession of sequences
$x^{(0)}\subset x^{(1)}\subset\cdots\subset x^{(q)}$, where $x^{(s)}$ is
the in-order sequence of $T_s$, for $s=1,2,\dots,q$. In particular, the
insertion of $\overrightarrow{B_s}$ is analogous to the operation of
attaching $T_s$: each insertion creates a new ascent top; ascent tops
have distinct labels; and ascent tops are preserved when new blocks
are appended.
To illustrate this construction, let
$$
  P=
  \lbrace 1\rbrace
  \lbrace 2,1\rbrace
  \lbrace 2\rbrace
  \lbrace 2,1\rbrace
  \lbrace 5,4,2\rbrace
  \lbrace 5,3,2\rbrace
  \lbrace 7,6,3\rbrace.
$$
be the Fishburn cover of Figure~\ref{figure_comb_tree}. The diagonal
blocks are
$$
  \overrightarrow{B_1}=1,\quad \overrightarrow{B_2}=21,\quad
  \overrightarrow{B_5}=542,\quad \overrightarrow{B_7}=763.
$$
The non-diagonal blocks are
$$
  \overrightarrow{B_3}=2,\quad \overrightarrow{B_4}=21,\quad
  \overrightarrow{B_6}=532.
$$
By juxtaposing the diagonal blocks, we obtain
$$
  x^{(0)}=1\ 21\ 542\ 763.
$$
Then we insert the non-diagonal blocks, each one immediately before
the leftmost occurrence of its index, starting from the one with
biggest index:
\begin{align*}
  x^{(0)} & = 121542763\\
  \overrightarrow{B_6}\;\longrightarrow\quad x^{(1)} & = 1215427\ \underline{532}\ 63\\
  \overrightarrow{B_4}\;\longrightarrow\quad x^{(2)} & = 1215\ \underline{21}\ 427\ 532\ 63\\
  \overrightarrow{B_3}\;\longrightarrow\quad x^{(3)} & = 1215\ 21\ 427\ 5\ \underline{2}\ 32\ 63\\
\end{align*}
In the end, we get the modified ascent sequence
$$
  x=x^{(3)}=121521427523263.
$$
As expected, $x$ is the in-order sequence of
the Fishburn tree of Figure~\ref{figure_comb_tree}.

Conversely, we wish to define the Fishburn cover $\pairs(x)$
directly on $x$. Equivalently, for each entry $x_i$ we determine the
index $\blabel(x_i)$ of the maximal right path that contains the corresponding
node $v_i$ in $\seqtotree(x)$. To do so, we recursively apply the
max-decomposition to $x$, as in the definition of $\seqtotree$. The
label of the leftmost occurrence $x_m$ of $\max(x)$ is
$\blabel(x_m)=x_m$. Let $y=\pref(y)y_m\suff(y)$ be the current
sequence in the max-decomposition of $x$.
Then the leftmost occurrence $y_j$ of $\max(\pref(y))$ in $\pref(y)$ gets label
\begin{align*}
  \blabel(y_j) &=
    \begin{cases}
      y_j & \text{if $y_m$ is a left-to-right maximum of $x$,} \\
      y_m & \text{otherwise.}
    \end{cases}\\
\intertext{While the leftmost occurrence $y_j$ of $\max(\suff(y))$ in $\suff(y)$ gets label}
  \blabel(y_j) &= \blabel(y_m).
\end{align*}

It is not hard to see that these rules are analogous to the rules
given in Equation~\ref{eqn:recursive_def_of_b} and illustrated in
Figure~\ref{figure_rules_blabel}. Below we apply this procedure
to the modified ascent sequence $x=121521427523263$ obtained before.
At each step, the current leftmost maxima are highlighted; arrows
starting from a current leftmost maximum carry the $\blabel$-label
of the target node; and $\blabel$-labels are recorded as subscripts.
\begin{center}
  \tikzstyle{gstyle}=[draw,semithick,inner sep=3pt,rounded corners=3pt]
    \begin{tabular}{c}
    \begin{tikzpicture}[xscale=0.6, yscale=0.5, semithick, baseline=0pt]
      \node(1) at (1,0){$1$};
      \node(2) at (2,0){$2$};
      \node(3) at (3,0){$1$};
      \node(4) at (4,0){$5$};
      \node(5) at (5,0){$2$};
      \node(6) at (6,0){$1$};
      \node(7) at (7,0){$4$};
      \node(8) at (8,0){$2$};
      \node(9) at (9,0) [gstyle] {$\underline{7}_7$};
      \node(10) at (10,0){$5$};
      \node(11) at (11,0){$2$};
      \node(12) at (12,0){$3$};
      \node(13) at (13,0){$2$};
      \node(14) at (14,0){$6$};
      \node(15) at (15,0){$3$};
      \draw (9) edge[->,bend right=60] node[above, scale=.75]{$5$} (4);
      \draw (9) edge[->,bend left=60]  node[above, scale=.75]{$7$} (14);
    \end{tikzpicture}
    \\[3ex]
    \begin{tikzpicture}[xscale=0.6, yscale=0.5, semithick, baseline=0pt]
      \node(1) at (1,0){$1$};
      \node(2) at (2,0){$2$};
      \node(3) at (3,0){$1$};
      \node(4) at (4,0) [gstyle] {$\underline{5}_5$};
      \node(5) at (5,0){$2$};
      \node(6) at (6,0){$1$};
      \node(7) at (7,0){$4$};
      \node(8) at (8,0){$2$};
      \node(9) at (9,0){$\underline{7}_7$};
      \node(10) at (10,0){$5$};
      \node(11) at (11,0){$2$};
      \node(12) at (12,0){$3$};
      \node(13) at (13,0){$2$};
      \node(14) at (14,0) [gstyle] {$\underline{6}_7$};
      \node(15) at (15,0){$3$};
      \draw (4)  edge[->,bend right=90] node[above, scale=.75]{$2$} (2);
      \draw (14) edge[->,bend right=90] node[above, scale=.75]{$6$} (10);
      \draw (4)  edge[->,bend left=90]  node[above, scale=.75]{$5$} (7);
      \draw (14) edge[->,bend left=90]  node[above, scale=.75]{$7$} (15);
    \end{tikzpicture}
    \\[3ex]
    \begin{tikzpicture}[xscale=0.6, yscale=0.5, semithick, baseline=0pt]
      \node(1) at (1,0){$1$};
      \node(2) at (2,0) [gstyle] {$\underline{2}_2$};
      \node(3) at (3,0){$1$};
      \node(4) at (4,0){$\underline{5}_5$};
      \node(5) at (5,0){$2$};
      \node(6) at (6,0){$1$};
      \node(7) at (7,0) [gstyle] {$\underline{4}_5$};
      \node(8) at (8,0){$2$};
      \node(9) at (9,0){$\underline{7}_7$};
      \node(10) at (10,0) [gstyle] {$\underline{5}_6$};
      \node(11) at (11,0){$2$};
      \node(12) at (12,0){$3$};
      \node(13) at (13,0){$2$};
      \node(14) at (14,0){$\underline{6}_7$};
      \node(15) at (15,0) [gstyle] {$\underline{3}_7$};
      \draw (2)  edge[->,bend right=90] node[above, scale=.75]{$1$} (1);
      \draw (7)  edge[->,bend right=90] node[above, scale=.75]{$4$} (5);
      \draw (2)  edge[->,bend left=90]  node[above, scale=.75]{$2$} (3);
      \draw (7)  edge[->,bend left=90]  node[above, scale=.75]{$5$} (8);
      \draw (10) edge[->,bend left=90]  node[above, scale=.75]{$6$} (12);
    \end{tikzpicture}
    \\[3ex]
    \begin{tikzpicture}[xscale=0.6, yscale=0.5, semithick, baseline=0pt]
      \node(1) at (1,0){$\underline{1}_1$};
      \node(2) at (2,0){$\underline{2}_2$};
      \node(3) at (3,0){$\underline{1}_2$};
      \node(4) at (4,0){$\underline{5}_5$};
      \node(5) at (5,0) [gstyle] {$\underline{2}_4$};
      \node(6) at (6,0){$1$};
      \node(7) at (7,0){$\underline{4}_5$};
      \node(8) at (8,0){$\underline{2}_5$};
      \node(9) at (9,0){$\underline{7}_7$};
      \node(10) at (10,0){$\underline{5}_6$};
      \node(11) at (11,0){$2$};
      \node(12) at (12,0) [gstyle] {$\underline{3}_6$};
      \node(13) at (13,0){$2$};
      \node(14) at (14,0){$\underline{6}_7$};
      \node(15) at (15,0){$\underline{3}_7$};
      \draw (12) edge[->,bend right=90] node[above, scale=.75]{$3$} (11);
      \draw (5)  edge[->,bend left=90]  node[above, scale=.75]{$4$} (6);
      \draw (12) edge[->,bend left=90]  node[above, scale=.75]{$6$} (13);
    \end{tikzpicture}
    \end{tabular}
\end{center}
In the end, we get
 $$
  x=1_1\ 2_2\ 1_2\ 5_5\ 2_4\ 1_4\ 4_5\ 2_5\ 7_7\ 5_6\ 2_3\ 3_6\ 2_6\ 6_7\ 3_7,
 $$
and, as expected, the corresponding Fishburn cover is
$$
\pairs(x)=\lbrace 1\rbrace\lbrace 2,1\rbrace\lbrace 2\rbrace\lbrace 2,1\rbrace\lbrace 5,4,2\rbrace\lbrace 5,3,2\rbrace
    \lbrace 7,6,3\rbrace.
$$
We are now able to compute the flip
and sum operations on modified ascent sequences. For convenience, we
represent a Fishburn cover $P=B_1\dots B_k$ as a biword containing
a column $\binom{i}{j}$ for each $j\in B_i$, with entries in the top row
sorted in increasing order, and breaking ties by sorting the bottom row
in decreasing order. For instance, the Fishburn cover obtained above is
written as
$$
{
 \setlength\arraycolsep{2pt}
 \setcounter{MaxMatrixCols}{25}
 \pairs(x)=
 \begin{pmatrix}
  \ 1\  & 2 & 2\  & 3\  & 4 & 4\  & 5 & 5 & 5\  & 6 & 6 & 6\  & 7 & 7 & 7 \ \\
  \ 1\  & 2 & 1\  & 2\  & 2 & 1\  & 5 & 4 & 2\  & 5 & 3 & 2\  & 7 & 6 & 3 \ \\
 \end{pmatrix}.
}
$$
A biword whose entries are sorted this way is called a
\emph{Burge word}~\cite{AlUh,CC}. It is well known that Burge words are in
bijection with nonnegative integer matrices whose every row and column has
at least one nonzero entry: each biword is associated to a matrix whose
$(i,j)$-th entry is equal to the number of columns $\binom{i}{j}$
contained in the biword. The map $\treetomat$ is simply the restriction
of this correspondence on Fishburn covers and Fishburn matrices.

\subsection*{The flip operation}\label{section_flip}
Let $A=(a_{i,j})$ be a $k\times k$ Fishburn
matrix. By applying the $\flip$ operation, the $(i,j)$-th entry of $A$
is mapped to the $(k+1-j,k+1-i)$-th entry of $\flip(A)$. In terms of
Fishburn covers, the flip operation acts on the columns of
$\pairs(A)$ by
$$
  \binom{i}{j}\longmapsto\binom{k+1-j}{k+1-i}.
$$
Given a Fishburn cover $P$, let $\flip(P)$ be the biword obtained by
applying the above operation to each column of $P$ (and then
  sorting the resulting columns accordingly).
It is clear from the preceding discussion that
$\flip(x)$ is the modified sequence of
$\flip\bigl(\pairs(x)\bigr)$; that is,
$$
  \flip(x)=\treetoseq\bigl( \flip\bigl(\pairs(x)\bigr) \bigr)
$$

\begin{example}\label{example_flip}
  Let $x=1612423553$ be a modified ascent sequence (note that
  this is the in-order sequence of the Fishburn tree depicted
  in Figure~\ref{figure_poset}). We shall compute
  $\flip(x)$. The Fishburn cover of $x$ is
  $$
    {
        \setlength\arraycolsep{2pt}
        \pairs(x)=
        \begin{pmatrix}
          \ 1\ & 2\ & 3\ & 4\ & 5 & 5\ & 6 & 6 & 6 & 6 \ \\
          \ 1\ & 1\ & 2\ & 2\ & 4 & 3\ & 6 & 5 & 5 & 3 \ \\
        \end{pmatrix}
      }
  $$
  and
  $$
    {
        \setlength\arraycolsep{2pt}
        \flip\bigl(\pairs(x)\bigr)=
        \begin{pmatrix}
          \ 1\ & 2 & 2\ & 3\ & 4 & 4\ & 5 & 5\ & 6 & 6 \ \\
          \ 1\ & 1 & 1\ & 2\ & 2 & 1\ & 4 & 3\ & 6 & 5 \ \\
        \end{pmatrix}.
      }
  $$
  Finally, we apply $\treetoseq$ to obtain
  $$
    \flip(x)=1611214235.
  $$
  To check that $\flip(x)$ is the correct sequence, it is easy to
  compute the corresponding matrices
  $$
    {
        \setlength\arraycolsep{3pt}
        \renewcommand\arraystretch{0.75}
        \bigl(\treetomat\circ\seqtotree\bigr)(x)=
        \begin{bmatrix}
          1     &       &       &       &       &   \\
          1     & \cdot &       &       &       &   \\
          \cdot & 1     & \cdot &       &       &   \\
          \cdot & 1     & \cdot & \cdot &       &   \\
          \cdot & \cdot & 1     & 1     & \cdot &   \\
          \cdot & \cdot & 1     & \cdot & 2     & 1 \\
        \end{bmatrix}
        \quad\text{and}\quad
        \bigl(\treetomat\circ\seqtotree\bigr)\bigl(\flip(x)\bigr)=
        \begin{bmatrix}
          1     &       &       &       &       &   \\
          2     & \cdot &       &       &       &   \\
          \cdot & 1     & \cdot &       &       &   \\
          1     & 1     & \cdot & \cdot &       &   \\
          \cdot & \cdot & 1     & 1     & \cdot &   \\
          \cdot & \cdot & \cdot & \cdot & 1     & 1 \\
        \end{bmatrix}
      }
  $$
  to see that each one is the flip of the other.
\end{example}

\subsection*{The sum operation} Let $A$ and $A'$ be two Fishburn matrices of
dimension $j\times j$ and $k\times k$, respectively, with $j<k$.  Let
$$
  \renewcommand\arraystretch{0.94}
  \pairs(A)=\begin{pmatrix}
    1\cdots 1 & 2\cdots 2 & \dots & j\cdots j \\
    \overrightarrow{B_1} & \overrightarrow{B_2} & \dots & \overrightarrow{B_k}
  \end{pmatrix}
  \text{ and }\,
  \pairs(A')=\begin{pmatrix}
    1\cdots 1 & 2\cdots 2 & \dots & k\cdots k \\
    \overrightarrow{B'_1} & \overrightarrow{B'_2} & \dots & \overrightarrow{B'_k}
  \end{pmatrix}\!.
$$
It is easy to see that the Fishburn cover associated to
$A+A'$ contains the union of columns of $A$ and $A'$. We define a
sum operation on Fishburn covers accordingly. That is, if
$P=B_1\cdots B_j$ and $P'=B'_1\cdots B'_k$ are Fishburn covers, we let
$$
  P+P'=\begin{pmatrix}
    1\cdots 1    & 2\cdots 2    & \dots & j\cdots j    & j+1\cdots j+1 & \dots & k\cdots k \\
    \overrightarrow{B_1\cup B'_1} & \overrightarrow{B_2\cup B'_2} & \dots & \overrightarrow{B_j\cup B'_j} & \overrightarrow{B'_{j+1}}      & \dots & \overrightarrow{B'_{k}}
  \end{pmatrix}.
$$
Therefore, $x+x'$ is the modified sequence of
$\pairs(x)+\pairs(x')$; that is,
$$
  x+x'=\treetoseq\bigl( \pairs(x)+\pairs(x') \bigr)
$$

\begin{example}
  Let $x=1612423553$ and $x'=113312443$ be modified ascent sequences. We
  wish to compute their sum $x+x'$. We have
  $$
    \pairs(x)=
    {
    \setlength\arraycolsep{2pt}
    \begin{pmatrix}
      \ 1\ & 2\ & 3\ & 4\ & 5 & 5\ & 6 & 6 & 6 & 6 \ \\
      \ 1\ & 1\ & 2\ & 2\ & 4 & 3\ & 6 & 5 & 5 & 3 \ \\
    \end{pmatrix}
    \quad\text{and}\quad
    \pairs(x')=
    \begin{pmatrix}
      \ 1 & 1\ & 2\ & 3 & 3 & 3\ & 4 & 4 & 4 \ \\
      \ 1 & 1\ & 1\ & 3 & 3 & 2\ & 4 & 4 & 3 \ \\
    \end{pmatrix}.
    }
  $$
  Thus
  $$
    {
        \setlength\arraycolsep{2pt}
        \pairs(x)+\pairs(x')=
        \begin{pmatrix}
          \ 1 & 1 & 1\ & 2 & 2\ & 3 & 3 & 3 & 3\ & 4 & 4 & 4 & 4\ & 5 & 5\ & 6 & 6 & 6 & 6 \ \\
          \ 1 & 1 & 1\ & 1 & 1\ & 3 & 3 & 2 & 2\ & 4 & 4 & 3 & 2\ & 4 & 3\ & 6 & 5 & 5 & 3 \ \\
        \end{pmatrix}
      }
  $$
  and
  $$
    x+x'=\treetoseq\bigl(\pairs(x)+\pairs(x')\bigr)=
    1113311224432643553.
  $$
  Again, it is easy to check that the equality
  $\bigl(\treetomat\circ\seqtotree\bigr)(x+x')=
  \bigl(\treetomat\circ\seqtotree\bigr)(x)+
  \bigl(\treetomat\circ\seqtotree\bigr)(x')$
  between the corresponding matrices holds. Indeed,
  $$
    {
        \setlength\arraycolsep{3pt}
        \renewcommand\arraystretch{0.75}
        \bigl(\treetomat\circ\seqtotree\bigr)(x)=
        \begin{bmatrix}
          1     &       &       &       &       &   \\
          1     & \cdot &       &       &       &   \\
          \cdot & 1     & \cdot &       &       &   \\
          \cdot & 1     & \cdot & \cdot &       &   \\
          \cdot & \cdot & 1     & 1     & \cdot &   \\
          \cdot & \cdot & 1     & \cdot & 2     & 1 \\
        \end{bmatrix},
        \qquad
        \bigl(\treetomat\circ\seqtotree\bigr)(x')=
        \begin{bmatrix}
          2     &       &   &   \\
          1     & \cdot &   &   \\
          \cdot & 1     & 2 &   \\
          \cdot & \cdot & 1 & 2 \\
        \end{bmatrix}
      }
  $$
  and
  $$
    {
        \setlength\arraycolsep{3pt}
        \renewcommand\arraystretch{0.75}\bigl(\treetomat\circ\seqtotree\bigr)(x+x')=
        \begin{bmatrix}
          3     &       &   &       &       &   \\
          2     & \cdot &   &       &       &   \\
          \cdot & 2     & 2 &       &       &   \\
          \cdot & 1     & 1 & 4     &       &   \\
          \cdot & \cdot & 1 & 1     & \cdot &   \\
          \cdot & \cdot & 1 & \cdot & 2     & 1 \\
        \end{bmatrix}.
      }
  $$
\end{example}

\section{Final remarks}\label{section_final}

We have introduced Fishburn trees and Fishburn
covers, two classes of objects that transparently embody the combinatorial
structure of modified ascent sequences, Fishburn matrices and
$(\twoplustwo)$-free posets. The bijections relating these
families are illustrated in Figure~\ref{figure_maps_diagram}.
Fishburn trees act as a central hub from which every other Fishburn
structure can be easily derived.
Modified ascent sequences arise from the in-order traversal of
Fishburn trees; if a tree is drawn accordingly, then the sequence
is simply obtained by letting the labels of the tree fall under the
action of gravity (as in Figure~\ref{figure_fishburn_tree}).
In this sense, modified ascent sequences can be regarded as vertical
projections of Fishburn trees.
On the other hand, Fishburn matrices and $(\twoplustwo)$-free posets
stem from the $\rpath$-decomposition of Fishburn trees. Referring
once again to the usual representation of trees adopted in this paper,
matrices and posets can be seen as the projection of Fishburn trees
along their maximal right paths, that is, along the NW-SE branches.

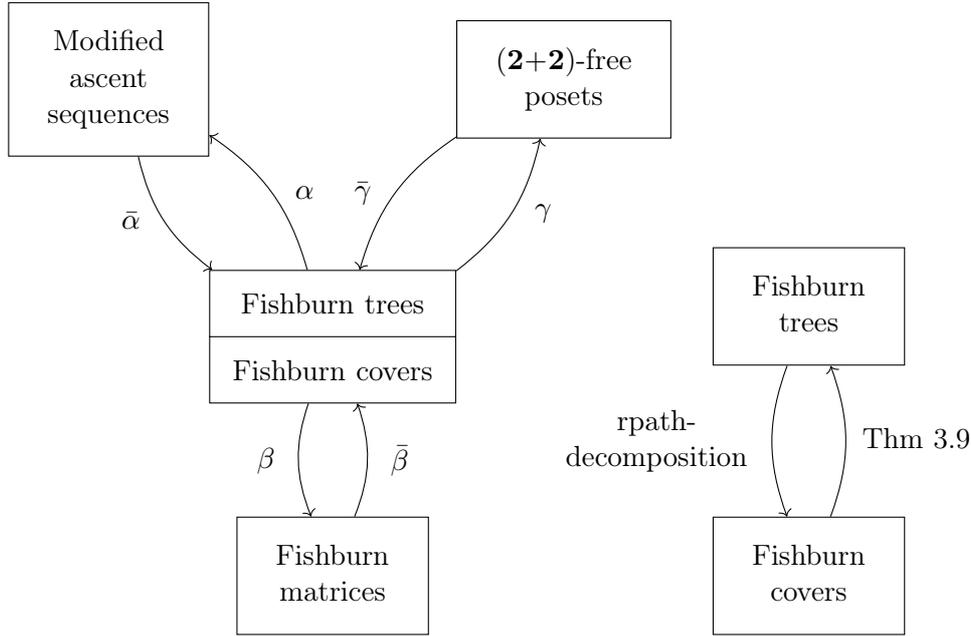
\begin{figure}
  \begin{center}
    $
      \begin{tikzpicture}[inner sep=3mm]
        \matrix [column sep=0mm, row sep=15mm]{
          \node[draw, shape=rectangle] (mod) {
            \begin{tabular}{c}
              Modified\\
              ascent\\
              sequences
            \end{tabular}
          }; &
          & \node[draw, shape=rectangle] (pos) {
            \begin{tabular}{c}
              $(\twoplustwo)$-free\\
              posets
            \end{tabular}}; \\
          & \node[draw, rectangle split, rectangle split parts=2] (tree) {%
            Fishburn trees\nodepart{second}Fishburn covers}; & \\
          & \node[draw, shape=rectangle] (mat) {%
            \begin{tabular}{c}
              Fishburn\\
              matrices
            \end{tabular}}; & \\
        };
        \draw (tree) edge[->,bend right=20] node[right]{$\treetoseq$} (mod);
        \draw (mod)  edge[->,bend right=20] node[left]{$\seqtotree$} (tree);
        \draw (tree) edge[->,bend right=20] node[right]{$\treetopos$} (pos);
        \draw (pos)  edge[->,bend right=20] node[left]{$\postotree$} (tree);
        \draw (tree) edge[->,bend right=20] node[left]{$\treetomat$}  (mat);
        \draw (mat)  edge[->,bend right=20] node[right]{$\mattotree$} (tree);
      \end{tikzpicture}\hspace{-22mm}
      \begin{tikzpicture}[inner sep=3mm]
        \matrix [row sep=20mm] {
          \node[draw, shape=rectangle] (tree) {
            \begin{tabular}{c}
              Fishburn\\ trees
            \end{tabular}}; \\
          \node[draw, shape=rectangle] (par) {
            \begin{tabular}{c}
              Fishburn\\
              covers
            \end{tabular}}; \\
        };
        \draw (tree) edge[->,bend right=20] node[left,xshift=2mm]{
          \begin{tabular}{c}
            $\rpath$-\\
            decomposition
          \end{tabular}} (par);
        \draw (par)  edge[->,bend right=20] node[right,xshift=-3mm]{
          \begin{tabular}{c}
            Thm \ref{pairs_to_tree}
          \end{tabular}} (tree);
      \end{tikzpicture}
    $
  \end{center}
  \caption{Bijections relating Fishburn trees and Fishburn covers to
    modified ascent sequences, Fishburn matrices and $(\twoplustwo)$-free
    posets.}\label{figure_maps_diagram}
\end{figure}

One may view Fishburn covers as encodings of the other
Fishburn structures in the following manner. Given a Fishburn cover
$P=B_1\dots B_k$, the other Fishburn objects are obtained by suitably
arranging the ``elementary blocks'' $B_1,\dots,B_k$:

\begin{itemize}
\item[$\fishtrees$:] To obtain a Fishburn tree~$T$, each elementary
  block $B_i$ is encoded as a maximal right path $W_i$. Diagonal paths
  form the tree $T_0$. Then the other paths are attached,
  one by one, to the leftmost occurrence of the corresponding
  label. This construction has been described in Theorem~\ref{pairs_to_tree}.

\item[$\Modasc$:] To obtain a modified ascent sequence~$x$, each elementary
  block $B_i$ is encoded as a decreasing sequence $\overrightarrow{B_i}$. Diagonal
  sequences are juxtaposed to obtain $x^{(0)}$. Then the remaining
  sequences are inserted one by one, each one immediately before the
  leftmost occurrence of the corresponding integer. This construction has
  been described in Section~\ref{section_flipsum}.

\item[$\fishmat$:] To obtain a Fishburn matrix~$A$, each elementary
  block $B_i$ is simply encoded as the $i$-th row of $A$ under the
  action of $\treetomat$.

\item[$\fishpos$:] To obtain a $(\twoplustwo)$-free poset~$Q$, each
  elementary block $B_i$ is encoded as the difference between
  two consecutive strict down-sets of $Q$, which are strictly ordered by
  inclusion, under the action of $\treetopos$.
\end{itemize}

In light of this, we could say that the Fishburn structures considered
here fall into two categories: Fishburn trees and modified sequences
are obtained by arranging their elementary blocks as dictated by the
leftmost occurrences of labels or integers. On the other hand, the most
trivial way of arranging elementary blocks---listing one block above the
other, as rows of a matrix or as strict down-sets of a poset---leads to
Fishburn matrices and $(\twoplustwo)$-free posets, respectively.

Fishburn permutations are related to modified ascent sequences
by the Burge transpose~\cite{BMCDK,CC}. The link between these two
structures has been extensively discussed~\cite{C2,CC}, and for this
reason we have decided to not include Fishburn permutations
in this paper. A deeper investigation on the relation
between Fishburn permutations and Fishburn trees is left
for future work.

As a first application of our framework, we have provided a
more direct solution to the $\flip$ and $\sumop$ problems
on modified ascent sequences.
A natural question is to investigate how the corresponding
operations act on Fishburn trees.
In Section~\ref{section_flip}, we showed that the $\flip$ operation
acts on the Fishburn cover of a Fishburn tree $T$ by mapping each
column $(i,j)$ to $(k+1-j,k+1-i)$, where $k=\max(T)$.
In other words, the $\flip$ of a Fishburn tree $T$ is obtained
by replacing the labels $\bigl(\blabel(v),\vlabel(v)\bigr)$
with $\bigl(k+1-\vlabel(v),k+1-\blabel(v)\bigr)$, for each
$v\in\vertset(T)$. A similar argument could be used to address
the $\sumop$ of two Fishburn trees.
Is there a more direct way of computing the $\flip$ and $\sumop$
of Fishburn trees? Also, is there any other natural
involution on the set of Fishburn trees, and how does the
corresponding operation act on modified ascent sequences,
Fishburn matrices and $(\twoplustwo)$-free posets?

Fishburn trees can be used to determine how
several statistics and subfamilies of Fishburn structures
are related to each other. We sketch some preliminary results below,
leaving a deeper investigation for future work.

A \emph{flat step} in a modified ascent sequence~$x$ is a pair of
consecutive entries $x_{i+1}=x_i$. Two elements of a poset~$Q$ are
\emph{indistinguishable} if they have the same down-set and up-set.
A modified ascent sequence is \emph{primitive} if it does not contain
flat steps and a poset is \emph{primitive} if it has no pairs of
indistinguishable elements. Furthermore, a modified ascent sequence
is \emph{self-modified} if it is equal to the corresponding (plain)
ascent sequence. Dukes and McNamara~\cite{DM} showed that self-modified ascent sequences, Fishburn matrices with positive diagonals, and
$(\twoplustwo)$-free posets with a chain of maximum length are all in bijection.

\begin{proposition}
  Let $T$ be a Fishburn tree. Let $x=\treetoseq(T)$, $A=\treetomat(T)$
  and $Q=\treetopos(T)$ be the corresponding modified ascent sequence,
  Fishburn matrix, and $(\twoplustwo)$-free poset, respectively. Then
  the following four conditions are equivalent:
  \begin{enumerate}
  \item $T$ is strictly-decreasing;
  \item $x$ is primitive;
  \item $A$ is binary;
  \item $Q$ is primitive.
  \end{enumerate}
  Similarly, the following four conditions are equivalent:
  \begin{enumerate}
  \item $T$ is comb-shaped;
  \item $x$ is self-modified;
  \item The main diagonal of $A$ is strictly positive;
  \item $Q$ contains a chain of maximum length.
  \end{enumerate}
\end{proposition}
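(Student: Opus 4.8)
The plan is to route every condition in both lists through the Fishburn cover $\pairs(T)=B_1\cdots B_k$ and the associated Fishburn matrix $A=\treetomat(T)$, whose $(i,j)$-entry $m_i(j)$ counts the nodes labelled $j$ on the $i$-th maximal right path. For the first list I will show that all four statements are equivalent to the single condition that \emph{every block $B_i$ is multiplicity-free} (equivalently, $A$ is binary). For the second list I will show that all four are equivalent to \emph{$i\in B_i$ for every $i\in[k]$} (equivalently, the main diagonal of $A$ is strictly positive). Since $A$ binary is condition~(3) of the first list by the definition of $m_i(j)$, and a positive diagonal is condition~(3) of the second list, it suffices to tie the remaining three conditions of each list to these pivots.

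For the first list, the equivalence with $T$ strictly-decreasing is immediate: every endotree is already strictly decreasing to the left, so the only edges that can fail to be strict are right edges, and along a maximal right path the labels are weakly decreasing; hence $T$ is strictly decreasing precisely when no $W_i$ repeats a label, i.e.\ when every $B_i$ is a set. For the poset, from $u<v\iff\blabel(u)<\vlabel(v)$ and the strict chain $D_1\subset\cdots\subset D_k$ one reads off that in $Q=\treetopos(T)$ the down-set of a node depends only on $\vlabel$ and the up-set only on $\blabel$; thus two nodes are indistinguishable iff they agree in both $\vlabel$ and $\blabel$, i.e.\ iff some pair $(i,j)$ is realised twice, i.e.\ iff some $m_i(j)\ge 2$. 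The only non-formal step is the sequence. I will prove that $x=\treetoseq(T)$ has a flat step $x_i=x_{i+1}$ exactly when $v_i$ and $v_{i+1}$ are in-order-consecutive nodes of a single maximal right path carrying equal labels: a flat step forces $(i+1,x_{i+1})\notin\asctops(x)$, so by Lemma~\ref{asctops_treetops} the successor $v_{i+1}$ has no left child; if $v_i$ had no right child then $v_{i+1}$ would be an ancestor possessing a left child, a contradiction, so $v_{i+1}$ is the leftmost node of the right subtree of $v_i$, which by strict left-decrease must be $\rchild(v_i)$ itself. For the converse I use the defining equality $\unseen(T)=\treetops(T)$: if some $W_i$ repeated a label, the lower occurrence cannot carry a left child (else it would be a second first-occurrence of that value), so the two occurrences are in-order-consecutive and produce a flat step. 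Hence $x$ primitive $\iff$ every $B_i$ multiplicity-free, closing the first list.

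For the second list, comb-shaped means $\nondiag(T)=\emptyset$, i.e.\ every maximal right path is diagonal; by Lemma~\ref{diag_nondiag_paths} this is exactly $i\in B_i$ for all $i$, that is $a_{ii}>0$ for all $i$, giving (1)$\iff$(3). For the poset, combining $u<v\iff\blabel(u)<\vlabel(v)$ with $\vlabel(u)\le\blabel(u)$ shows that along any chain $u_1<\cdots<u_m$ the levels $\vlabel(u_a)$ strictly increase, so $m\le k$; moreover a chain of the maximal length $k$ forces $\vlabel(u_a)=\blabel(u_a)=a$ for each $a$, which is realisable iff every diagonal entry $a_{aa}$ is positive, yielding (3)$\iff$(4) directly through our bijection. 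Finally, when $T$ is comb-shaped the reconstruction of Theorem~\ref{pairs_to_tree} performs no non-diagonal insertions, so $x=\treetoseq(T)=\overrightarrow{B_1}\cdots\overrightarrow{B_k}$ with each block led by its maximum $i$; this is precisely the shape in which the ascent tops take the values $1,2,\dots,k$ in left-to-right order.

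The main obstacle is the remaining half of (1)$\iff$(2): identifying this ``juxtaposed'' in-order form with \emph{self-modified}, since self-modified is defined through the external Bousquet-M\'elou \etal\ modification map rather than intrinsically on $\Modasc$. I expect to close it either by invoking the Dukes and McNamara~\cite{DM} result, which already matches self-modified sequences with positive-diagonal Fishburn matrices, after checking compatibility with our bijections, or, preferably, by proving directly that a modified ascent sequence is self-modified iff its ascent tops occur in increasing value order and then reading off that this is exactly the comb form above. The delicate point is verifying that the modification map fixes precisely the sequences of this shape in $\Ascseq\cap\Modasc$, and that is where I would concentrate the effort.
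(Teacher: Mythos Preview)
The paper does not actually prove this proposition; it is stated in Section~\ref{section_final} as a ``preliminary result'' with the explicit caveat that a deeper investigation is left for future work. So there is no authors' proof to compare against, and your proposal must be judged on its own.

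Your treatment of the first list is correct and complete. Routing everything through ``each $B_i$ is multiplicity-free'' is exactly the right pivot: the equivalence with (1) uses only that endotrees are already strictly decreasing to the left; the equivalence with (3) is the definition of $m_i(j)$; for (4) your observation that $D(u)$ depends only on $\vlabel(u)$ and the strict up-set depends only on $\blabel(u)$ is precisely what is needed (the strictness of the chain $D_1\subset\cdots\subset D_k$, and its dual, guarantee that equal down-/up-sets force equal $\vlabel$/$\blabel$). Your argument for (2) is also sound; one small clarification worth making explicit is that when a block repeats a label you pick two \emph{adjacent} nodes on $W_i$ with equal labels (possible since the labels along $W_i$ are weakly decreasing), and then your Fishburn-tree argument that the lower one has no left child shows they are in-order consecutive.

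For the second list your (1)$\iff$(3) via Lemma~\ref{diag_nondiag_paths} and your (3)$\iff$(4) via the chain-length bound are both clean and correct. The only genuine gap, as you identify, is tying the comb form $x=\overrightarrow{B_1}\cdots\overrightarrow{B_k}$ to ``self-modified''. Here your worry about compatibility in the \cite{DM} route is unnecessary: the paper itself asserts in the introduction that the Dukes--Parviainen bijection \cite{DP} \emph{equals} $\treetomat\circ\seqtotree$ precomposed with the hat map, and the Dukes--McNamara refinement \cite{DM} is stated for that same bijection. Hence invoking \cite{DM} closes (2)$\iff$(3) with no further checking. If you prefer the direct route, the key fact is that the modification algorithm of \cite{BMCDK} leaves an ascent sequence fixed precisely when every ascent top is a strict left-to-right maximum; for a sequence in $\Modasc$ (where ascent tops are first occurrences of $1,\dots,k$) this says the values $1,\dots,k$ first appear in increasing order, which is exactly the juxtaposed form $\overrightarrow{B_1}\cdots\overrightarrow{B_k}$ you derived from comb-shapedness.
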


We end with an open problem: Dukes and Parviainen~\cite{DP} described
the set of ascent sequences corresponding to bidiagonal Fishburn
matrices. What is the corresponding set of Fishburn trees?


\begin{thebibliography}{99}

  \bibitem{AlUh} P.\ Alexandersson and J.\ Uhlin,
  \emph{Cyclic sieving, skew Macdonald polynomials and Schur positivity},
  arXiv:1908.00083, 2019.

  \bibitem{BMCDK} M.\ Bousquet-M\'elou, A.\ Claesson, M.\ Dukes and S.\ Kitaev,
  \emph{(2+2)-free posets, ascent sequences and pattern avoiding permutations},
  Journal of Combinatorial Theory, Series A, Vol.\ 117, pp.\ 884--909, 2010.

  \bibitem{Bu} W.\ H.\ Burge,
  \emph{Four correspondences between graphs and generalized Young tableaux},
  Journal of Combinatorial Theory, Series A, Vol.\ 17, pp.\ 12--30, 1974.
  
  \bibitem{Ca} A.\ Cayley,
  \emph{On the analytical forms called trees},
  Collected Mathematical Papers, Vol.\ 4, Cambridge University Press, pp. 112--115, 1891.

  \bibitem{Ce} G.\ Cerbai,
  \emph{Sorting Cayley permutations with pattern-avoiding machines},
  Australasian Journal of Combinatorics, Vol.\ 80(3), 2021.
  
  \bibitem{C2} G.\ Cerbai,
  \emph{Modified ascent sequences and Bell numbers},
  arXiv:2305.10820, 2023.

  \bibitem{CC} G.\ Cerbai and A.\ Claesson,
  \emph{Transport of patterns by Burge transpose},
  European Journal of Combinatorics, Vol.\ 108, 2023.

  \bibitem{CYZ} D.\ Chen, S.\ H.\ F.\ Yan, R.\ D.\ P.\ Zhou,
  \emph{Equidistributed Statistics on Fishburn Matrices and Permutations},
  The Electronic Journal of Combinatorics, Vol.\ 26(1), \#P1.11, 2019.

  \bibitem{CL} A.\ Claesson, S.\ Linusson,
  \emph{n! matchings, n! posets}, Proceedings of the American
  Mathematical Society, Vol.\ 139(2), pp.\ 435--449, 2011.
  
  \bibitem{DJK} M.\ Dukes, V.\ Jel\'inek and M.\ Kubitzke,
  \emph{Composition matrices, (2+2)-free posets and their specializations},
  The Electronic Journal of Combinatorics, Vol.\ 18(1), \#P44, 2011.      
  
  \bibitem{DKRS} M.\ Dukes, S.\ Kitaev, J.\ Remmel and E.\ Steingrimsson,
  \emph{Enumerating (2+2)-free posets by indistinguishable elements},
  Journal of Combinatorics, Vol.\ 2, pp.\ 139--163, 2011.
  
  \bibitem{DM} M.\ Dukes, P.\ R.\ W. McNamara,
  \emph{Refining the bijections among ascent sequences, (2+2)-free posets,
  integer matrices and pattern-avoiding permutations},
  Journal of Combinatorial Theory, Series A, Vol.\ 167, pp.\ 403--430, 2019.

  \bibitem{DP} M.\ Dukes and R.\ Parviainen,
  \emph{Ascent sequences and upper triangular matrices containing non-negative integers},
  Electronic Journal of Combinatorics, Vol.\ 17, \#R53, 2010.

  \bibitem{ES} N.\ Eriksen and J.\ Sjöstrand,
  \emph{Equidistributed statistics on matchings and permutations},
  The Electronic Journal of Combinatorics, 21(4), 2014.

  \bibitem{F} P.\ C.\ Fishburn,
  \emph{Intransitive indifference with unequal indifference intervals},
  Journal of Mathematical Physics, Vol.\ 7, pp. 144--149, 1970.
  
  \bibitem{F2} P.\ C.\ Fishburn,
  \emph{Interval Orders and Interval Graphs. A Study of Partially Ordered Sets},
  John Wiley \& Sons, Ltd., Chichester, 1985.
  
  \bibitem{FJLYZ} S.\ Fu, E.Y.\ Jin, Z.\ Lin, S.H.F.\ Yan and R.D.P.\ Zhou,
  \emph{A new decomposition of ascent sequences and Euler–Stirling statistics},
  Journal of Combinatorial Theory, Series A, 170, Article 105141, 31 pp, 2020.

  \bibitem{J} V.\ Jel\'inek,
  \emph{Catalan pairs and Fishburn triples},
  Advances in applied mathematics, Vol.\ 70, pp. 1--31, 2015.
  
  \bibitem{KR} S.\ Kitaev, J.\ Remmel,
  \emph{Enumerating (2+2)-free posets by the number of minimal elements and other statistics},
  Discrete Applied Mathematics, Vol.\ 159(17), pp. 2098--2108, 2011.

  \bibitem{L} P.\ Levande,
  \emph{Fishburn diagrams, Fishburn numbers and their refined generating functions},
  Journal of Combinatorial Theory, Series A, 120(1), pp. 194--217, 2013.
    
  \bibitem{MF} M.\ Mor and A.\ S.\ Fraenkel,
  \emph{Cayley permutations},
  Discrete mathematics, Vol.\ 48(1), pp.\ 101--112, 1984.
  
  \bibitem{Sl} N.\ J.\ A.\ Sloane,
  \emph{The on-line encyclopedia of integer sequences},
  at oeis.org.
  
  \bibitem{St} A.\ Stoimenow,
  \emph{Enumeration of chord diagrams and an upper bound for Vassiliev invariants},
  Journal of Knot Theory and Its Ramifications, Vol.\ 7, pp. 93--114, 1998.  
  
  \bibitem{Ya} S.\ H.\ F.\ Yan,
  \emph{On a conjecture about enumerating (2+2)-free posets},
  European Journal of Combinatorics, Vol.\ 32(2), pp. 282--287, 2011.
  
  \bibitem{YY} Ying C., Yu J.,
  \emph{An operator on ascent sequences},
  Wuhan University Journal of Natural Sciences, Vol.\ 19(4), pp. 289–-294, 2014. 

\end{thebibliography}
\end{document}